\documentclass[a4paper, 11pt, reqno]{amsart}
\usepackage[usenames,dvipsnames]{color}

\usepackage{enumitem}
\usepackage{verbatim}
\usepackage[noend]{algpseudocode}
\usepackage{algorithm}
\usepackage{hyperref}
%show citation labels for easy cross-ref
%\usepackage{showlabels}
\usepackage{amssymb, amsmath, latexsym, graphics, graphicx,caption}
\usepackage{tikz}
\usepackage{caption, subcaption}
\usetikzlibrary{calc}
\newtheorem{theorem}{Theorem}[section]
\newtheorem{lemma}[theorem]{Lemma}
\newtheorem{corollary}[theorem]{Corollary}
\newtheorem{proposition}[theorem]{Proposition}
\theoremstyle{definition}
%------ 
\newtheorem{definition}[theorem]{Definition}
\newtheorem{example}[theorem]{Example}

\newtheorem{conjecture}[theorem]{Conjecture}
\newtheorem{remark}[theorem]{Remark}

\makeatother

 % Proof

\renewcommand{\iff}{\Leftrightarrow}

\def\N{{\mathbb N}}

\def\R{{\mathbb R}}

\newcommand\trop{\mathbb{T}}

%-- unified short hand for ut2/3/n
\newcommand\ut[1]{\mathcal{UT}_{#1}(\trop)}
\newcommand{\Newt}{\mathsf{Newt}}
%--- more shorthand: proof of minmax
\newcommand\lab{\mathsf{label}}
%--- Ngoc additional shorthands
\DeclareMathOperator{\conv}{conv}
\newcommand{\pluseq}{\mathrel{+}=}
%--- Ngoc's blue comments

%--- Marianne's red comments

\thanks{The authors would like to thank the Mittag-Leffler Institute and the organisers of the Spring 2018 program on Tropical Geometry, Amoebas and Polytopes for providing them with the opportunity to interact.}

\begin{document}
\title[Geometry of upper triangular tropical matrix identities]{Geometry and algorithms for upper triangular tropical matrix identities}
\author{MARIANNE JOHNSON AND NGOC MAI TRAN}
\date{\today}
\maketitle

\begin{abstract}
We provide geometric methods and algorithms to verify, construct and enumerate pairs of words (of specified length over a fixed $m$-letter alphabet) that form identities in the semigroup $\ut{n}$ of $n\times n$ upper triangular tropical matrices. In the case $n=2$ these identities are precisely those satisfied by the bicyclic monoid, whilst in the case $n=3$ they form a subset of the identities which hold in the plactic monoid of rank $3$. To each word we associate a signature sequence of lattice polytopes, and show that two words form an identity for $\ut{n}$ if and only if their signatures are equal. Our algorithms are thus based on polyhedral computations and achieve optimal complexity in some cases. For $n=m=2$ we prove a Structural Theorem, which allows us to quickly enumerate the pairs of words of fixed length which form identities for $\ut{2}$. This allows us to recover a short proof of Adjan's theorem on minimal length identities for the bicyclic monoid, and to construct minimal length identities for $\ut{3}$, providing counterexamples to a conjecture of Izhakian in this case. We conclude with six conjectures at the intersection of semigroup theory, probability and combinatorics, obtained through analysing the outputs of our algorithms.

\keywords{Key words: Tropical polynomials; bicyclic monoid; semigroup identities; Dyck paths; tropical matrices.}
\thanks{}
\end{abstract}

\section{Introduction}
Consider arithmetic over the tropical semifield $\mathbb{T} = (\mathbb{R}\cup\{-\infty\}, \oplus, \odot)$, where $x\oplus y = {\rm max}(x,y)$ and $x \odot y = x+y$. 
For each positive integer $n$, the set of $n \times n$ upper triangular matrices 
$$\ut{n} = \{M \in \mathbb{T}^{n \times n}: M_{ij} = -\infty \mbox{ if } i < j\}$$ 
is a semigroup under matrix multiplication. We write $w \sim_n v$ to denote that a pair of words $w,v$ over an alphabet $\Sigma$ is a semigroup identity for $\ut{n}$, meaning that each morphism $\varphi$ from the free semigroup on $\Sigma$ to $\ut{n}$ satisfies $\varphi(w)=\varphi(v)$. The study of semigroup identities for $\ut{n}$ has attracted much attention in recent literature \cite{CHLS16,I14, I14Erratum, IM10,O15, Shi14, TaylorThesis}, with interesting connections between the equational theory of these monoids and the bicyclic \cite{DJK17} and plactic \cite{Cain17,I17} monoids. Typical results in this direction are as follows. The equivalence relation determined by $\sim_n$ is a nonidentical relation \cite{I14} which is a refinement of the relation determined by identities for the bicyclic monoid \cite{IM10}; in the case $n = 2$ this refinement is trivial \cite{DJK17}; in the case $n=3$, the relation $\sim_3$ is also a refinement of the corresponding relation for the plactic monoid of rank three \cite{Cain17,I17}. Thus one can easily deduce, for example, that words in the same $\sim_n$ equivalence class must contain the same number of each letter. However, little insight into the structure of $\sim_n$ equivalence classes has been gained through analyzing known identities of $\ut{n}$. In part, this is because existing methods for constructing these identities require delicate arguments, developed with the primary focus of proving the existence of nontrivial identities \cite{I14, IM10, O15, TaylorThesis}.

In this work, we give new geometric methods and algorithms to verify, construct and enumerate identities in $\ut{n}$ amongst words with a given number of occurrences of each letter from a fixed alphabet. In general, verifying whether two given words form an identity in a given semigroup provides a major computational challenge. In the case of $\ut{n}$, the na\"{i}ve algorithm would be to multiply out the matrices symbolically, and then test for functional equality of the resulting pairs of tropical polynomials corresponding to each entry. Both of these operations are costly when the words are long. Our approach is based upon a geometric interpretation of the characterisation of $\ut{n}$ identities given in \cite{DJK17}, which involves more polynomial pairs but in fewer variables and monomials. A key observation is that the polynomials which appear in \cite{DJK17} are tropical polynomials with trivial coefficients, and thus they are equal if and only if their Newton polytopes are equal. This translates the problem to verifying equality of lattice polytopes, which can be efficiently computed and, more importantly, adds geometric insights that allow us to deduce structural information about these identities. 

Our strongest structural result applies to $\ut{2}$ identities over a two letter alphabet. Let $W(\ell_a,\ell_b)$ be the set of words with $\ell_a$ occurrences of $a$ and $\ell_b$ occurrences of $b$. This set is naturally in bijection with Northeast staircase paths from $(0,0)$ to $(\ell_a,\ell_b)$. In particular, $W(\ell_a, \ell_b)$ is a distributive lattice with respect to the natural partial order $\preceq$, in which $w \preceq v$ if and only if the path of $w$ does not rise above the path of $v$. The following Structural Theorem shows that $\ut{2}$ identities (or equivalently, identities for the bicyclic monoid \cite{DJK17}) respect the distributive lattice structure of $W(\ell_a,\ell_b)$. In particular, it implies that the $\sim_2$ equivalence class of $w \in W(\ell_a, \ell_b)$ is completely specified by the unique minimal and maximal words in this class. We give an explicit and efficient construction for these words in Theorem~\ref{thm:min.max}.
\begin{theorem}[Structural Theorem]\label{thm:chain}
Let $w,u, v \in W(\ell_a,\ell_b)$. 
\begin{itemize}
\item[(i)] If $w \preceq u \preceq v$ and $w\sim_2 v$, then $w\sim_2 u$.
\item[(ii)] If $w\sim_2 v$, then $w \sim_2 w\vee v \sim_2 w\wedge v$.
\end{itemize}
\end{theorem}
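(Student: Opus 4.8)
The plan is to translate the statement into a question about lattice polytopes and then settle it with a short rigidity lemma for convex hulls of monotone sequences.

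First I would fix coordinates. Identify $w\in W(\ell_a,\ell_b)$ with its Northeast path, and let $P_E(w)$ (resp.\ $P_N(w)$) be the set of lattice points from which the path takes an East (resp.\ North) step; write $Q_E(w)=\conv P_E(w)$ and $Q_N(w)=\conv P_N(w)$. Specialising the signature characterisation recalled above to $n=2$, the signature of $w$ is the Newton polytope of the $(1,2)$-entry, namely $\conv\bigl((P_E(w)\times\{0\})\cup(P_N(w)\times\{1\})\bigr)$; reading off its slices at heights $0$ and $1$ gives
$$w\sim_2 v\iff Q_E(w)=Q_E(v)\ \text{and}\ Q_N(w)=Q_N(v).$$
The key structural observation is that these point sets are \emph{single-valued along transversals}: the path leaves each column $x=p$ by exactly one East step, so $P_E(w)=\{(p,\eta^w_p):0\le p<\ell_a\}$ for a weakly increasing sequence $\eta^w$, and dually $P_N(w)=\{(\rho^w_q,q):0\le q<\ell_b\}$ for a weakly increasing $\rho^w$. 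In these coordinates $w\preceq v\iff\eta^w_p\le\eta^v_p$ for all $p\iff\rho^w_q\ge\rho^v_q$ for all $q$, while $\vee$ and $\wedge$ act as the pointwise $\max$ and $\min$ of the $\eta$'s (equivalently, $\min$ and $\max$ of the $\rho$'s).

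The heart of the argument is the following rigidity lemma. Let $a_0\le\cdots\le a_{r-1}$ and $b_0\le\cdots\le b_{r-1}$ be integer sequences with $\conv\{(a_i,i)\}=\conv\{(b_i,i)\}=:H$, and let $c_i$ lie between $a_i$ and $b_i$ for each $i$; then $\conv\{(c_i,i)\}=H$. For the inclusion $\conv\{(c_i,i)\}\subseteq H$ I would note that $(c_i,i)$ lies on the segment joining $(a_i,i)$ and $(b_i,i)$, both of which are in $H$. For the reverse inclusion I would use single-valuedness: every vertex $(v^*,i^*)$ of $H$ is an extreme point of $\conv\{(a_i,i)\}$, hence belongs to that set, and since the set has a unique point with second coordinate $i^*$ we get $a_{i^*}=v^*$; likewise $b_{i^*}=v^*$, so $c_{i^*}=v^*$ and the vertex survives in $\{(c_i,i)\}$. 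This is exactly the step where single-valuedness is essential: it upgrades equality of convex hulls to pointwise agreement at the vertex transversals, and I expect it to be the main conceptual point of the proof (a priori, equal hulls constrain only extreme points, not individual corners).

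Finally I would apply the lemma twice. For part (i), if $w\preceq u\preceq v$ then $\rho^w_q\ge\rho^u_q\ge\rho^v_q$ for every row and $\eta^w_p\le\eta^u_p\le\eta^v_p$ for every column, so each coordinate of $u$ lies between those of $w$ and $v$; assuming $w\sim_2 v$ gives $Q_N(w)=Q_N(v)$ and $Q_E(w)=Q_E(v)$, and the lemma (used along rows for $\rho$, and in the other coordinate direction for $\eta$) yields $Q_N(u)=Q_N(w)$ and $Q_E(u)=Q_E(w)$, i.e.\ $w\sim_2 u$. For part (ii), the entries of $w\vee v$ and $w\wedge v$ are pointwise maxima and minima of those of $w$ and $v$, hence again lie between them, so the same two applications give $Q_E(w\vee v)=Q_E(w\wedge v)=Q_E(w)$ and $Q_N(w\vee v)=Q_N(w\wedge v)=Q_N(w)$, whence $w\sim_2 w\vee v\sim_2 w\wedge v$. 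The only delicate bookkeeping is in the first paragraph (the precise form of the signature at $n=2$ and the orientation of $\preceq$ relative to $\rho$ and $\eta$); the rigidity lemma itself is then elementary.
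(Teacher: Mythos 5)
Your proof is correct and follows essentially the same route as the paper: reduce $\sim_2$ to equality of the two convex hulls $\conv(\alpha^w)$ and $\conv(\beta^w)$ (the paper's Corollary~\ref{polytopes}), then get one inclusion from betweenness of heights and the reverse inclusion from the fact that each vertex of the common hull must lie in both height sets (single-valuedness along transversals) and hence, by squeezing, in the height set of the intermediate word. Your only organizational difference is packaging this vertex-squeeze argument as a standalone rigidity lemma applied twice, where the paper runs the same argument inline for parts (i) and (ii) separately.
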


In general, for any finite alphabet, we provide 
algorithms to solve the problems listed below. 
\begin{itemize}
  \item \texttt{CheckPair}: given a pair of words $w,v$ over alphabet $\Sigma$ and a positive integer $n$, decide if $w\sim_n v$.
  \item \texttt{ListWord}: given a word $w$ over alphabet $\Sigma$ and a positive integer $n$, compute the equivalence class of $w$ in $\ut{n}$.
  \item \texttt{ListAll}: given an alphabet $\Sigma$, a vector $(\ell_1, \dots, \ell_{|\Sigma|})$ of non-negative integers, and a positive integer $n$, list all $\ut{n}$ equivalence classes of words with $\ell_i$ occurrences of the $i$-th letter in $\Sigma$. 
\end{itemize}
The worst-case complexity of our various algorithms is summarised in the following theorem. 
\begin{theorem}\label{thm:algorithm}
Let $m, n \geq 2$. For non-negative integers $\ell_1,\ldots, \ell_m$, let $W(\ell_1, \ldots, \ell_m)$ denote the set of words of length $\ell:=\ell_1 + \cdots + \ell_m$ containing exactly $\ell_i$ occurrences of letter $a_i$. For $w \in W(\ell_1, \ldots, \ell_m)$, let $C_n(w)$ denote the size of the $\sim_n$-equivalence class of $w$ and let $C_n(\ell)$ denote the number of $\sim_n$-equivalence classes of words in $W(\ell_1,\dots,\ell_m)$. The following table shows the worst-case complexity of our algorithms in various cases. 
\begin{center}
\begin{tabular}{ l|c|c|c}
Task & $m = 2, n = 2$ &  $m = 2, n > 2$ & $m > 2$ \\	
\hline
\texttt{CheckPair} & $\ell$ & $2^{n-1}\ell^2$  & $m^{n-1}{\ell \choose n-1}^2$  \\
\hline
\texttt{ListWord} & $\ell$ & 
$\sum_{d=2}^{n-1}2^d\binom{\ell}{d}^2C_d(w)$
& $m^{n-1}{\ell \choose \ell_{1}, \dots, \ell_{m}}{\ell \choose n-1}^2$ \\
\hline
\texttt{ListAll} & $C_2(\ell)
\ell^2$ & $2^{n-1}{\ell \choose \ell_1}{\ell \choose n-1}\ell^2$  & $m^{n-1}{\ell \choose \ell_{1}, \dots, \ell_{m}}{\ell \choose n-1}^2$ \\
\hline
\end{tabular}
\end{center}
Furthermore, the complexity for \texttt{CheckPair} and \texttt{ListWord} are the best possible for the case $m = n=2$.
\end{theorem}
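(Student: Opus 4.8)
The plan is to reduce every task to two atomic operations—computing the \emph{signature} of a single word and testing equality of two signatures—and then to bound the cost of each task by counting how many times these operations are invoked. Recall from the discussion preceding the statement (and the characterisation of \cite{DJK17}) that $w \sim_n v$ holds precisely when the two words have identical signatures, where a signature is a finite sequence of Newton polytopes of tropical polynomials with trivial coefficients. The first step is therefore to pin down the combinatorial size of a signature: I would show that, for an alphabet of size $m$, a signature consists of $O(m^{n-1})$ polytopes, one for each way the $n-1$ strictly super-diagonal level-increases of a generic matrix in $\ut{n}$ can be distributed among the $m$ letters, and that each such polytope is the convex hull of at most $\binom{\ell}{n-1}$ lattice points, indexed by the choices of $n-1$ transition positions among the $\ell$ letters of the word. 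When $m = 2$ the exponent data collapses to a planar object, so the relevant polytopes are lattice polygons in a box of side $\ell$ and can be compared in $O(\ell^2)$ time; this is what produces the $2^{n-1}\ell^2$ entry (rather than $2^{n-1}\binom{\ell}{n-1}^2$) for \texttt{CheckPair}.

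With these two estimates in hand, \texttt{CheckPair} is immediate: compute both signatures and compare them polytope-by-polytope, giving $m^{n-1}\binom{\ell}{n-1}^2$ in general and $2^{n-1}\ell^2$ when $m=2$; for $n=m=2$ a bounded number of planar comparisons, each linear in $\ell$, suffices and the cost is linear. For \texttt{ListAll} I would enumerate all $\binom{\ell}{\ell_1,\dots,\ell_m}$ words, compute each signature, and bucket words by signature equality using a dictionary keyed on the signature; the total cost is the number of words times the per-word signature cost, which reproduces the corresponding table entries. In the special case $m=n=2$ this enumeration is wasteful, and instead I would exploit Theorem~\ref{thm:chain}: each class is an interval $[w_{\min}, w_{\max}]$ of the distributive lattice $W(\ell_1,\ell_2)$, so the $C_2(\ell)$ classes can be listed directly by their extremal words, yielding the $C_2(\ell)\,\ell^2$ bound.

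The most delicate task is \texttt{ListWord}. For $m=n=2$ the Structural Theorem reduces the problem to emitting the two words $w_{\min}, w_{\max}$, which by Theorem~\ref{thm:min.max} are computed in $O(\ell)$ time; this is the linear entry. For $m=2,\ n>2$ I would use that $\sim_n$ refines $\sim_{n-1}$, which refines $\cdots$, which refines $\sim_2$, and compute the class by successive refinement: start from the cheaply described $\sim_2$-class and, at each stage $d$ from $2$ up to $n-1$, split the current blocks into $\sim_d$-classes using level-$d$ signature comparisons. The key point, to be proved by an amortised count, is that at stage $d$ one only performs signature tests \emph{within} blocks that are already $\sim_d$-equivalent, so the work is bounded by $2^d\binom{\ell}{d}^2$ per block across $C_d(w)$ blocks, giving the stated sum $\sum_{d=2}^{n-1} 2^d \binom{\ell}{d}^2 C_d(w)$.

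Finally, the optimality claims for $m=n=2$ are lower-bound statements. For \texttt{CheckPair} any algorithm must read both input words, forcing $\Omega(\ell)$ and matching the $O(\ell)$ upper bound. For \texttt{ListWord}, the Structural Theorem guarantees that the class is completely determined by, and is output as, the pair $(w_{\min}, w_{\max})$; since writing this description already takes $\Theta(\ell)$ and reading the input takes $\Omega(\ell)$, the $O(\ell)$ algorithm is again optimal. I expect the genuine obstacles to be twofold: first, establishing the tight vertex bound $\binom{\ell}{n-1}$ together with the dimension collapse in the $m=2$ case, since a loose estimate here propagates multiplicatively through every entry of the table; and second, the amortised analysis underpinning \texttt{ListWord} for $n>2$, where one must argue that refinement never re-examines pairs across already-separated classes, so that the cost telescopes into the displayed sum rather than the naive product $C_n(w)\cdot\texttt{CheckPair}$.
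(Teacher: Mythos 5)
Your overall strategy coincides with the paper's: both reduce everything to the cost of computing a word's signature (the paper's lemma on \texttt{Signature}, giving $O(m^d\binom{\ell}{d}^2)$ for the degree-$d$ part, and $O(\ell)$ for $m=2$, $d=1$ via sorted points and a monotone-chain hull) plus signature comparisons; \texttt{ListAll} is handled by hashing the signatures of all $\binom{\ell}{\ell_1,\dots,\ell_m}$ words; \texttt{ListWord} for $m=2$, $n>2$ is handled by exactly the successive-refinement scheme you describe (the paper's \texttt{EquivalenceClass} algorithm, which filters the $\sim_d$-class of $w$ by degree-$d$ signature comparison against $w$ itself, giving $\sum_{d=2}^{n-1}2^d\binom{\ell}{d}^2C_d(w)$); the $m=n=2$ column uses the Structural Theorem together with the $O(\ell)$ \texttt{Minmax} construction (for \texttt{ListAll} the $\ell^2$ per class comes from the at most $\ell$ neighbour words explored per class, each via an $O(\ell)$ call to \texttt{Minmax} --- a point you gloss over); and the optimality claims are the same trivial reading/writing lower bounds. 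One terminological slip in your refinement analysis: $C_d(w)$ is the number of \emph{words} in the $\sim_d$-class of $w$, not a number of blocks; the correct accounting is one degree-$d$ signature computation per word of that class, which still telescopes to the stated sum.

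The genuine gap is your justification of the \texttt{CheckPair} entry $2^{n-1}\ell^2$ for $m=2$, $n>2$. You claim that for a two-letter alphabet ``the exponent data collapses to a planar object,'' so that all polytopes in the signature are lattice polygons comparable in $O(\ell^2)$ time. This is true only of the degree-$1$ part of the signature. For $2\le d\le n-1$ and $u\in\Sigma^d$, the polytope $\Newt(g_u^w)$ lives in $\mathbb{R}^{2d}$ --- its coordinates are the exponents of the $2d$ variables $x(s,k)$, $s\in\{a,b\}$, $1\le k\le d$ --- and its support can contain on the order of $\binom{\ell}{d}$ points: take $w=(ab)^{\ell/2}$ and $u=a^d$, where every choice of $d$ of the $\ell/2$ occurrences of $a$ yields a distinct exponent vector. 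So there is no dimension collapse and no $O(\ell^2)$ comparison; the bound that actually follows from the signature-complexity lemma is $O(2^{n-1}\binom{\ell}{n-1}^2)$, i.e.\ the general column specialised to $m=2$. (For what it is worth, the paper's own proof never derives the $2^{n-1}\ell^2$ cell either --- its $m=2$ argument treats \texttt{CheckPair} only for $n=2$ --- so that entry cannot be obtained from the stated lemmas, and your planarity argument does not repair this; it substitutes a false claim for a missing one.)
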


To the best of our knowledge, the only other algorithmic approaches to studying $\ut{n}$ arise via the connection to the bicyclic monoid which satisfies the same identities as $\ut{2}$ by \cite{DJK17}). Shle\u\i fer \cite{Shl90} enumerated all identities of the bicyclic monoid of length at most $13$, by means of a computer program. Pastijn \cite{P06} gave an algorithm for identity verification in the bicyclic monoid  via linear programming. In the case of a two letter alphabet, Pastijn's algorithm has complexity $O(\ell^2)$, while ours attains the optimal complexity of $O(\ell)$ (cf. Section \ref{sec:alg}). 

Our algorithms are efficient enough to find the shortest identities that hold in $\ut{3}$ through exhaustive search (noting that it suffices to consider words over a two letter alphabet). Without our results, constructing \emph{shortest} identities is a prohibitive computation. For instance, to prove that there is no $\ut{3}$ identity of length 21, after grouping words by pairs that have the same number of each letter, there are still $\sum_{i=1}^{10} {{21 \choose i} \choose 2} > 1.34 \times 10^{11}$ pairs of words to consider. In comparison, our algorithm takes less than 30 minutes on a conventional laptop to complete this task.
\begin{theorem}\label{thm:shortest.ut3}
The shortest $\ut{3}$ identities have length $22$. Up to exchanging the roles of $a$ and $b$ and reversing the words, these are: 
\begin{eqnarray*}
abbaabba\;ab\;baababbbbaba &\sim_3&abbaabba \; ba\; baababbbbaba \\
abbabaabab\; ab\;babaababba &\sim_3&abbabaabab\;ba\;babaababba\\
ababbabaab\;ab\;baababbaba &\sim_3&ababbabaab\;ba\;baababbaba\\
abbabaabab\;ab\;ababbabaab &\sim_3&abbabaabab\;ba\;ababbabaab\\
ababbabaab\;ab\;ababbabaab &\sim_3&ababbabaab\;ba\;ababbabaab\\
abbaabbaba\;ab\;baababbaba &\sim_3&abbaabbaba\;ba\;baababbaba\\
abbaabbaba\;ab\;babaabbaab &\sim_3&abbaabbaba\;ba\;babaabbaab\\
ababbabaab\;ab\;abbabaabab &\sim_3&ababbabaab\;ba\;abbabaabab\\
ababbabaab\;ab\;babaababba &\sim_3&ababbabaab\;ba\;babaababba\\
abbaabbaba\;ab\;babaababba &\sim_3&abbaabbaba\;ba\;babaababba
\end{eqnarray*}
\end{theorem}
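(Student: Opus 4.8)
The plan is to reduce the statement to an exhaustive but feasible computation over a two-letter alphabet, using the signature characterisation of $\ut{3}$ identities and the \texttt{ListAll} algorithm of Theorem~\ref{thm:algorithm}. The one genuinely non-computational ingredient is the reduction to two letters, which I would record first. If $w\sim_3 v$ is any nontrivial identity of length $\ell$ over an alphabet $\Sigma$, then $w\neq v$ forces a position $i$ with $w_i\neq v_i$; the letter-to-letter morphism $\phi\colon\Sigma^+\to\{a,b\}^+$ sending $w_i\mapsto a$, $v_i\mapsto b$ and every other letter to $a$ is length-preserving and has $\phi(w)_i\neq\phi(v)_i$, so $\phi(w)\neq\phi(v)$. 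Since for any $\eta\colon\{a,b\}^+\to\ut{3}$ the composite $\eta\circ\phi$ is a morphism from $\Sigma^+$, the identity $w\sim_3 v$ gives $\eta(\phi(w))=\eta(\phi(v))$, whence $\phi(w)\sim_3\phi(v)$ is again a nontrivial identity of the same length $\ell$. Thus the shortest length is already attained over $\{a,b\}$, and it suffices to search $W(\ell_a,\ell_b)$ for all content vectors $(\ell_a,\ell_b)$.

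To prove that no identity has length at most $21$, I would run \texttt{ListAll} on each content vector $(\ell_a,\ell_b)$ with $\ell_a+\ell_b=\ell$ for $\ell=2,\dots,21$, and check that every output class is a singleton; equivalently, that the number of $\sim_3$-classes in $W(\ell_a,\ell_b)$ equals the number $\binom{\ell_a+\ell_b}{\ell_a}$ of words of that content. Two symmetries cut down the work: the $a\leftrightarrow b$ exchange lets me restrict to $\ell_a\leq\ell_b$, and reflection across the anti-diagonal, $(M^{*})_{ij}=M_{n+1-j,\,n+1-i}$, is an anti-automorphism of $\ut{3}$ preserving upper triangularity, so $w\sim_3 v$ implies $w^{R}\sim_3 v^{R}$ and I need only one representative per reversal pair. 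The feasibility of the search is precisely the point of Theorem~\ref{thm:algorithm}: instead of testing the $\sum_{i=1}^{10}\binom{\binom{21}{i}}{2}$ same-content pairs directly, \texttt{ListAll} partitions each $W(\ell_a,\ell_b)$ into classes by comparing signatures of lattice polytopes, so the cost is governed by the polyhedral computations rather than by the number of pairs.

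For length $22$ I would apply the same procedure and collect those content vectors whose \texttt{ListAll} output contains a non-singleton class; reading off the members of these classes produces the identities, and quotienting by the group generated by the $a\leftrightarrow b$ exchange and reversal reduces them to the ten representatives displayed. To guard against error, each of the ten displayed pairs can be re-verified independently by \texttt{CheckPair}, and the full symmetry orbit of each pair can be expanded to confirm that the list is complete and that no further same-content pair has been missed. Note that the reduction of the first paragraph only secures the \emph{length} claim for all alphabets; the displayed enumeration is over $\{a,b\}$, which is where the exhaustive search is carried out.

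I expect the main obstacle to be not mathematical but one of making the exhaustive search simultaneously complete and trustworthy. Completeness requires certifying that every content vector with $\ell_a+\ell_b\leq 22$ has been enumerated and that \texttt{ListAll} is exhaustive within each; trustworthiness requires that the underlying polytope-equality tests be performed in exact integer or rational arithmetic, since a single floating-point coincidence could either manufacture a spurious identity or suppress a genuine one. Given the scale — on the order of $10^{6}$ words of length $22$, and more than $10^{11}$ naive pair comparisons already at length $21$ — the decisive step is to lean on the efficiency and exactness guaranteed by the signature formulation, which is exactly what turns an otherwise prohibitive search into the reported computation.
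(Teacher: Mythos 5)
Your proposal is correct and takes essentially the same approach as the paper: the paper's proof consists precisely of running the signature-based enumeration algorithm (Algorithm~\ref{alg:list.identities.general}, i.e.\ \texttt{ListAll}) on two-letter words of lengths $21$ and $22$ and reading off the classes, which is the heart of your plan. Your extras --- the explicit letter-to-letter morphism justifying the reduction to $\{a,b\}$ (which the paper only asserts in passing), checking all lengths $2,\dots,21$ rather than relying on the implicit padding argument that an identity of length $\ell$ yields one of length $\ell+1$, and the symmetry and re-verification safeguards --- are sound refinements of the same argument rather than a different route.
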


Theorem \ref{thm:shortest.ut3} disproves a conjecture of Izhakian \cite[Conjecture 6.1]{I14}, which in the case $n=3$
 predicts that a certain identity of length 26 is shortest. Moreover, since $\sim_3$ is a refinement of the relation defined by identities of $\mathcal{P}_3$, the plactic monoid of rank three \cite{Cain17, I17}, Theorem \ref{thm:shortest.ut3} represents the shortest identities currently known to hold in $\mathcal{P}_3$ (these will be shortest if the refinement turns out to be trivial). We note that Taylor \cite{TaylorThesis} has disproved  \cite[Conjecture 6.1]{I14} in the case $n=4$ by means of a carefully constructed example. The analogue of Theorem \ref{thm:shortest.ut3} for $n = 2$ corresponds to Adjan's result on minimal identities for the bicyclic monoid \cite{A66}. For ease of comparison, we restate Adjan's result and provide a new geometric proof in Section \ref{sec:main}.

Finally, our algorithms provide a wealth of data and insights, which lead to interesting  results and conjectures regarding the statistics of $\ut{n}$ equivalence classes. 
We present six such conjectures in our paper. Conjectures \ref{conj:adjacent} 
and \ref{conj:isoterm.local}
state that weaker versions of the Structural Theorem hold for $n \geq 3$. Conjectures \ref{conj:isoterm.fraction.1}, \ref{conj:fraction.ut3} and \ref{conj:isoterm.fraction.2} concern probabilistic constructions for long $\ut{n}$ identities on two-letter alphabets. Conjecture \ref{conj:large} gives a candidate for the largest $\sim_2$ class amongst words of fixed length $\ell$. We present further details, data and reasoning to support these conjectures in Section \ref{sec:alg-results}. 

Our paper is organised as follows. We review essential definitions and results in Section \ref{sec:background}, refining a result of \cite{DJK17} to show that $\sim_n$ can be characterised in terms of certain tropical polynomials. In Section \ref{sec:support} we show how to compute the support sets of these polynomials, and use this to provide appealing geometric interpretations of several properties of the equational theory of the bicyclic monoid. In Section \ref{sec:main}, we further specialise to the two-letter case, where we collect our major results. This includes a geometric proof of Adjan's theorem, proof of the Structural Theorem, and an explicit construction of the minimal and maximal elements of each $\ut{2}$ equivalence class (Theorem \ref{thm:min.max}). In Section \ref{sec:alg}, we give the algorithms and prove their complexity. We discuss the conjectures in conjunction with the results obtained through the algorithms in Section \ref{sec:alg-results}. Section \ref{conclusion} concludes the paper.
Documented code to reproduce all our experimental results is available at \url{https://github.com/princengoc/tropicalsemigroup}.

\subsection*{Notations}
We write $\mathbb{N}_{\geq 0}$ to denote the set of non-negative integers, and $\mathbb{N}$ to denote the set of positive integers. If $\Sigma$ is a finite alphabet, then $\Sigma^*$ (resp. $\Sigma^+$) will denote the \emph{free monoid} (resp. \emph{free semigroup}) on $\Sigma$, that is, the set of finite words (resp. finite nonempty words) over $\Sigma$ under the operation of concatenation. For a word $w \in \Sigma^+$, we write $w_i$ for the $i$-th letter of this word, $|w|_a$ for the number of occurrences of the letter $a$ in $w$ for each $a \in \Sigma$, and $|w| = \sum_{a\in\Sigma}|w|_a$ for the length of the word. Fixing a total order on $\Sigma$, the sequence $(|w|_a: a \in \Sigma)$ is called the \emph{content} of the word, denoted $c(w)$.  For each $d \in \mathbb{N}_{\geq 0}$, let $\Sigma^d = \{w \in \Sigma^*: |w| = d\}$. For each $n \in \mathbb{N}$ and each pair of words $w,v \in \Sigma^+$, write $w \sim_n v$ if $w=v$ is a \emph{semigroup identity} for $\ut{n}$. If $w$ is the unique element in its $\sim_n$-class, we say that $w$ is an \emph{isoterm} for $\ut{n}$. We write $w \leftrightarrow u$ to denote that $u$ is obtained from $w$ by a single adjacent letter swap, that is $w=w'\;xy\;w''$ and $u=w'\;yx\;w''$ where $w', w'' \in \Sigma^*$ and $x,y\in \Sigma$ with $x \neq y$. For $X, Y \subseteq \mathbb{R}^N$, we write $X \simeq Y$ if there exists an invertible affine linear transformation $f$ such that $f(X) = Y$. For a finite set of points $X$, let $\conv(X)$ denote the convex hull. Write $\mathcal{V}(P)$ for the set of vertices of a polytope~$P$.  For presentation of algorithms, $\leftarrow$ means variable assignment, $\pluseq$ means list addition. For a dictionary $L$, wew write $keys(L)$ to denote the list of its keys. If $u$ is not a key in a dictionary $L$, $L[u]$ is defined to be the empty list.

\section{Background}\label{sec:background}
In this section we recall the connection between tropical polynomials and identities of the semigroup $\ut{n}$. To each word $w \in \Sigma^+$ we then associate a signature, which is a sequence of lattice polytopes. The main result of this section is Theorem \ref{thm:reduce_checks}, which states that
$w \sim_n v$ if and only if the first $\sum_{j=1}^{n-1} |\Sigma|^j$ polytopes in their signatures are equal. This reduces identity verification in $\ut{n}$ to checking equality of a sequence of lattice polytope pairs, a key idea of our paper.

Let $I$ be a finite subset of $\mathbb{N}_{\geq0}^m$. The tropical polynomial with support $I$ and coefficients $(c_a \in \mathbb{R}: a \in I)$ is the formal algebraic expression $f=\bigoplus_{a \in {I}} \left(c_a \odot x^{\odot a}\right)$. Its Newton polytope $\Newt(f)$ is the convex hull of $I$. The polynomial expression $f$ defines a piece-wise linear function from $\mathbb{R}^m$ to $\mathbb{T}$, which by an abuse of notation we shall also denote by $f$, with
$$f(x) = \max_{a \in {I}} \left(c_a + \sum_{i=1}^m a_i x_i\right) \mbox{ for all } x \in \R^m,$$
where we regard $f(x)=-\infty$ for all $x \in \mathbb{R}^m$ if $I= \emptyset$.
Different polynomial expressions may define the same piece-wise linear function \cite[\S 1]{maclagan2015introduction}. In general, it is difficult to check if two tropical polynomials define the same function \cite{lin2017linear}. When $f$ and $g$ each have trivial coefficients (meaning $c_a=0$ for all $a \in I$ and $d_a = 0$ for all $a \in I'$), the following lemma reduces this task to a convex hull computation \cite[\S 1]{maclagan2015introduction}.
\begin{lemma}\label{lem:newton}
Two formal tropical polynomials $f,g$ each having trivial coefficients define the same function if and only if $\Newt(f) = \Newt(g)$.
\end{lemma}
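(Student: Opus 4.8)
The plan is to prove Lemma~\ref{lem:newton} by establishing both directions of the equivalence, working with the piecewise-linear functions $f(x) = \max_{a \in I}\left(\sum_{i=1}^m a_i x_i\right)$ and $g(x) = \max_{a \in I'}\left(\sum_{i=1}^m a_i x_i\right)$ determined by the (trivially-coefficiented) supports $I$ and $I'$. The key observation is that each of these functions is precisely the \emph{support function} of its Newton polytope: for a compact convex set $P \subseteq \R^m$, the support function $h_P(x) = \max_{y \in P}\langle x, y\rangle$ is a standard object of convex geometry, and since the maximum of a linear functional over the convex hull $\conv(I)$ is attained at a vertex (hence at some point of $I$), we have $f = h_{\Newt(f)}$ and $g = h_{\Newt(g)}$ on all of $\R^m$.

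The backward direction is then immediate: if $\Newt(f) = \Newt(g)$ as subsets of $\R^m$, then their support functions coincide, so $f = g$ as functions. For the forward direction, I would argue the contrapositive, or equivalently invoke the classical fact that a compact convex body is uniquely determined by its support function. Concretely, if $f = g$ as functions, then $h_{\Newt(f)} = h_{\Newt(g)}$, and the recovery formula $P = \bigcap_{x \in \R^m} \{y : \langle x, y\rangle \le h_P(x)\}$ expresses the polytope as an explicit intersection of half-spaces built from its support function, forcing $\Newt(f) = \Newt(g)$.

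I would phrase this cleanly by isolating the single lemma from convex geometry that does all the work: the map sending a compact convex set to its support function is injective. Given that, the proof reduces to the identification $f = h_{\Newt(f)}$, which follows because maximizing a linear function over a finite set $I$ gives the same value as maximizing it over $\conv(I)$ (the max over a polytope is attained at a vertex, and every vertex of $\Newt(f)$ lies in $I$).

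The only subtlety — and the step I would be most careful about — is the treatment of edge cases, namely the empty support and points at $-\infty$. When $I = \emptyset$ the function is identically $-\infty$ and $\Newt(f) = \emptyset$, so the statement holds vacuously on that side; one should note that the hypothesis that both polynomials have trivial (finite, real) coefficients rules out $-\infty$ coefficients, so the exponent vectors all contribute genuine affine pieces. Beyond this bookkeeping, the argument is entirely standard convex duality, so I would keep it short and cite \cite[\S 1]{maclagan2015introduction} for the support-function correspondence rather than reprove it from scratch.
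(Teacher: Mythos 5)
Your proof is correct. Note that the paper itself does not prove this lemma at all — it is quoted as a standard fact with a citation to \cite[\S 1]{maclagan2015introduction} — and your argument (identifying a trivially-coefficiented tropical polynomial with the support function of its Newton polytope, then invoking injectivity of the support-function map on compact convex sets, with the empty-support case checked separately) is exactly the standard convex-duality proof underlying that citation.
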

For each formal tropical polynomial $f$ with trivial coefficients there is a canonical reduced expression $[f]$ formed by taking the sum of monomials corresponding to the vertices of $\Newt(f)$ defining the same function; we shall write simply $[f]=[g]$ to denote that two formal tropical polynomials $f$ and $g$ with trivial coefficients define the same function.

Fix an alphabet $\Sigma$ and $\ell \in \mathbb{N}$ and consider a word $w \in \Sigma^\ell$. For each $s \in \Sigma$ let $x(s)$ be the upper triangular matrix whose $(i,j)$-th entry for $i \geq j$ is the variable $x(s,i,j)$ and associate to $w$ the matrix $x(w):= x(w_1) \odot \dots \odot x(w_\ell)$ obtained by multiplying out the corresponding matrices tropically. Note that each entry of $x(w)$ on or above the main diagonal is a tropical polynomial with trivial coefficients. It then follows immediately from the definitions that for $w,v \in \Sigma^+$, 
\begin{equation}
\label{cor:simple}
w \sim_n v \iff [x(w)_{jk}] = [x(v)_{jk}] \mbox{ for each } 1 \leq  j \leq  k \leq n.
\end{equation}
In this formulation, verifying semigroup identities in $\ut{n}$ is equivalent to verifying functional equality of ${n+1 \choose 2}$ pairs of tropical polynomials in $|\Sigma|{n+1 \choose 2}$ variables. This problem has two difficulties. First, obtaining these polynomials through symbolic matrix multiplication is costly. Second, to apply Lemma~\ref{lem:newton}, one must compute the corresponding Newton polytopes, which live in high dimension due to the large number of variables. 

We now derive an alternative characterisation, Theorem \ref{thm:reduce_checks}. It builds on  \cite[Theorem 5.2]{DJK17}, which utilises the well-known connection between tropical matrix multiplication and optimal paths in a weighted directed graph on $n$ nodes \cite{butkovivc2010max,maclagan2015introduction}. The characterisation of Theorem \ref{thm:reduce_checks} also translates $w \sim_n v$ into functional equality of a system of tropical polynomials with trivial coefficients. Though the system is larger than that in  \eqref{cor:simple}, there are fewer variables, and the monomials involved capture combinatorial properties of the two words, meaning that the Newton polytopes are lower dimension and can be constructed directly from the two words. This leads to significant gain in computation and understanding. 

Introduce variables $x(s,i)$ for each letter $s \in \Sigma$ and each $i \in \{1,\ldots, n-1\}$.
Fix a word $w \in \Sigma^+$. Let $d \in \mathbb{N}$ and for each word $u \in \Sigma^{d}$, define the formal tropical polynomial $g_u^w$ associated with $w$ as follows
\begin{equation}\label{eqn:gu.w}
g_{u}^{w}:= \bigoplus_{\pi \in {I}} \bigodot_{s\in \Sigma}\bigodot_{k=1}^{|u|} x(s, k)^{N_s^w(\pi_{k-1}, \pi_{k})}, 
\end{equation}
in variables $x(s,k), s \in \Sigma, 1 \leq k \leq |u|$, where $I$ is the set
\begin{equation}\label{eqn:index.i}
\{(\pi_0, \ldots, \pi_{|u|+1}): 0=\pi_0 < \cdots < \pi_{|u|+1}=|w|+1,  \;  w_{\pi_k}=u_k, 1\leq k \leq |u|\},
\end{equation}
and $N_s^w(\pi_{k-1}, \pi_{k})$ denotes the number of occurrences of $s$ lying strictly between $w_{\pi_{k-1}}$ and $w_{\pi_{k}}$. Recall that $u$ is said to be a scattered subword of $w$ if $w$ has a factorisation of the form
$$w=i(1)\; u_1 \;i(2) \cdots i(d)\; u_d \;i(d+1)$$
 where $i(j) \in \Sigma^*$. The set $I$ thus records the positions of the letters $u_1, \ldots, u_d$ within $w$ for each such factorisation, whilst the exponents $N_s^w(\pi_{k-1}, \pi_{k})$ for $s\in \Sigma$ record the content of the factors $i(k)$ for $k=1, \ldots d$. Notice that, by definition, $g_u^w(x) = -\infty$  for all $x \in \mathbb{R}^{|\Sigma|d}$ if and only if $I = \emptyset$.

\begin{definition}[Signature of a word]
Let $w \in \Sigma^+$. For each $d \in \mathbb{N}$, the {$d$-signature} of $w$ is the lexicographically ordered sequence of Newton polytopes $(\Newt(g_{u}^w):u \in \Sigma^d)$. The {signature of $w$ in $\ut{n}$} is the sequence of its $d$-signatures for $1 \leq d \leq n-1$. 
\end{definition}

\begin{figure}
\resizebox{1\textwidth}{!}{
\begin{tikzpicture}
   \foreach \a in {0,...,3}{
     \draw[help lines]
     (\a,0,0) -- (\a,0,-3)
     (0,0,-\a) -- (3,0,-\a)
     (0,\a,0) -- (0,\a,-3)
     (0,0,-\a) -- (0,3,-\a)
     (0,\a,-3) -- (3,\a,-3)
     (\a,0,-3) -- (\a,3,-3)
     ;
   }
;
\fill[blue!50] (0,0,0)--(1,1,-1)--(2,1,-1);
\node (A1) at (0, -0.2,0) {\footnotesize$(0,0,0)$};
\node (A2) at (0.8, 1.2,-1) {\footnotesize$(1,1,1)$};
\node (A3) at (2.2, 1.2,-1) {\footnotesize$(2,1,1)$};

   \draw[line width=1pt,blue](0,0,0)-- ++(1,0,0);
   \draw[line width=1pt,blue](1,1,-1)-- ++(2,0,0) ;
   \draw[line width=1pt,red] (1,0,0)-- ++(0,1,0);
   \draw[line width=1pt,red](3,1,-1) -- ++(0,1,0);
   \draw[line width=1pt,red](3,2,-2) -- ++(0,1,0);
   \draw[line width=1pt,green](1,1,0)-- ++(0,0,-1);
   \draw[line width=1pt,green](3,2,-1)-- ++(0,0,-1);
   \draw[line width=1pt,green](3,3,-2)-- ++(0,0,-1);
\end{tikzpicture}
\begin{tikzpicture}
   \foreach \a in {0,...,3}{
     \draw[help lines]
     (\a,0,0) -- (\a,0,-3)
     (0,0,-\a) -- (3,0,-\a)
     (0,\a,0) -- (0,\a,-3)
     (0,0,-\a) -- (0,3,-\a)
     (0,\a,-3) -- (3,\a,-3)
     (\a,0,-3) -- (\a,3,-3)
     ;
   }
;
\fill[green!50] (1,1,0)--(3,2,-1)--(3,3,-2);
\node (C1) at (1.2, 1,0) {\footnotesize$(1,0,0)$};
\node (C2) at (3, 1.8,-1) {\footnotesize$(3,1,1)$};
\node (C3) at (2.8, 3.2,-2) {\footnotesize$(3,2,2)$};
\node (C4) at (0, -0.2,0) {\footnotesize$\;$};

         \draw[line width=1pt,blue](0,0,0)-- ++(1,0,0);
   \draw[line width=1pt,blue](1,1,-1)-- ++(2,0,0) ;
   \draw[line width=1pt,red] (1,0,0)-- ++(0,1,0);
   \draw[line width=1pt,red](3,1,-1) -- ++(0,1,0);
   \draw[line width=1pt,red](3,2,-2) -- ++(0,1,0);
   \draw[line width=1pt,green](1,1,0)-- ++(0,0,-1);
   \draw[line width=1pt,green](3,2,-1)-- ++(0,0,-1);
   \draw[line width=1pt,green](3,3,-2)-- ++(0,0,-1);
\end{tikzpicture}
\begin{tikzpicture}
   \foreach \a in {0,...,3}{
     \draw[help lines]
     (\a,0,0) -- (\a,0,-3)
     (0,0,-\a) -- (3,0,-\a)
     (0,\a,0) -- (0,\a,-3)
     (0,0,-\a) -- (0,3,-\a)
     (0,\a,-3) -- (3,\a,-3)
     (\a,0,-3) -- (\a,3,-3)
     ;
   }
;
\fill[red!50] (1,0,0)--(3,1,-1)--(3,2,-2);
\node (B1) at (1, -0.2,0) {\footnotesize$(1,0,1)$};
\node (B2) at (3, 0.8,-1) {\footnotesize$(3,1,2)$};
\node (B3) at (2.8, 2,-2) {\footnotesize$(3,2,3)$};

   \draw[line width=1pt,blue](0,0,0)-- ++(1,0,0);
   \draw[line width=1pt,blue](1,1,-1)-- ++(2,0,0) ;
   \draw[line width=1pt,red] (1,0,0)-- ++(0,1,0);
   \draw[line width=1pt,red](3,1,-1) -- ++(0,1,0);
   \draw[line width=1pt,red](3,2,-2) -- ++(0,1,0);
   \draw[line width=1pt,green](1,1,0)-- ++(0,0,-1);
   \draw[line width=1pt,green](3,2,-1)-- ++(0,0,-1);
   \draw[line width=1pt,green](3,3,-2)-- ++(0,0,-1);
\end{tikzpicture}
}
\caption{The degree $1$ signature of $w=acbaacbcb$.}
\label{sgntre}
\end{figure}
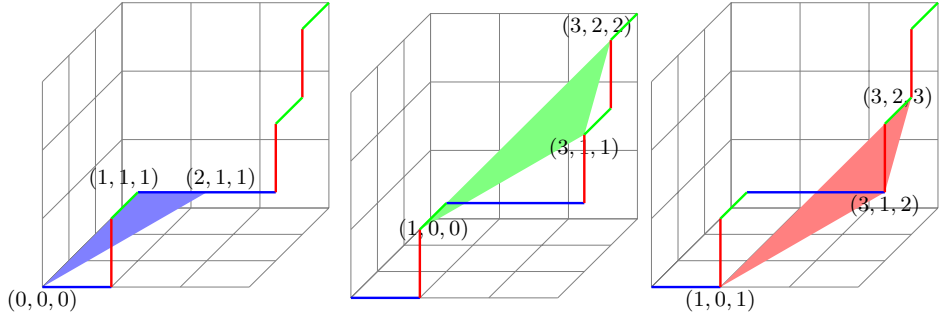

\begin{example}
Let $\Sigma=\{a,b,c\}$ and $w=acbaacbcb$. Then
\begin{eqnarray*}
g_a^w &=& x(a,1)^0x(b,1)^0x(c,1)^0  \oplus x(a,1)^1x(b,1)^1x(c,1)^1 \oplus x(a,1)^2x(b,1)^1 x(c,1)^1 \\
g_b^w &=& x(a,1)^1  x(b,1)^0  x(c,1)^0  \oplus x(a,1)^3 x(b,1)^1 x(c,1)^1  \oplus x(a,1)^3 x(b,1)^2  x(c,1)^2 \\
g_c^w &=& x(a,1)^1  x(b,1)^0 x(c,1)^1  \oplus x(a,1)^3  x(b,1)^1  x(c,1)^2 \oplus x(a,1)^3 x(b,1)^2  x(c,1)^3 .
 \end{eqnarray*} 
The degree $1$ signature of $w$ is $(\Newt(g_{a}^w),\Newt(g_{b}^w), \Newt(g_{c}^w))$, where
\begin{align*}
\Newt(g_{a}^w) &= \conv\{(0,0,0),(1,1,1),(2,1,1)\} \\
\Newt(g_{b}^w) &= \conv\{(1,0,0),(3,1,1),(3,2,2)\} \\
\Newt(g_{c}^w)) &= \conv\{(1,0,1),(3,1,2),(3,2,3)\}.
\end{align*}
These polytopes are illustrated in Figure \ref{sgntre}. 
\end{example}

\begin{theorem}\label{thm:reduce_checks}(c.f. \cite[Theorem 5.2]{DJK17})
Let $w, v \in \Sigma^+$, $n \geq 2$.  Then
\begin{equation}
 \label{eqn:to.prove}
w \sim_n v \Leftrightarrow  [g_{u}^w]=[g_{u}^v]  \mbox{ for all } u \in \Sigma^{d}, 1 \leq d \leq n-1
\end{equation}
or equivalently, $w\sim_n v$ if and only their signatures in $\ut{n}$ are equal.
\end{theorem}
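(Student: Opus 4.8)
**

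The goal is to connect the characterization in equation~\eqref{cor:simple}, which concerns the matrix entries $[x(w)_{jk}]$, to the polynomials $g_u^w$ appearing in~\eqref{eqn:gu.w}. The plan is to show that, for each fixed pair $1 \leq j \leq k \leq n$, the tropical polynomial $x(w)_{jk}$ can be reorganized as a combination of the $g_u^w$ ranging over scattered subwords $u$ of length $d = k - j$, so that equality of the relevant signatures is exactly equivalent to equality of the matrix entries. The starting point is the graph-theoretic interpretation of tropical matrix multiplication: the entry $x(w)_{jk}$ is a max (tropical sum) over weighted paths from node $k$ down to node $j$ in a directed graph on $\{1,\dots,n\}$, where traversing the letters of $w$ one at a time, each step either stays at a node or descends via an off-diagonal entry. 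This is precisely the content of \cite[Theorem 5.2]{DJK17} that the statement cites, and I would invoke it as the structural backbone.

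\textbf{Key steps.} First I would make precise the correspondence between a monomial of $x(w)_{jk}$ and a choice of $d := k-j$ ``descent positions'' within the word $w$: a path from $k$ to $j$ through the upper triangular structure must descend exactly $k-j$ levels, and after recording which letters of $w$ trigger the descents, the remaining letters contribute diagonal-entry variables that tally how many of each letter occur between consecutive descents. This recovers exactly the index set $I$ of~\eqref{eqn:index.i} and the exponents $N_s^w(\pi_{k-1},\pi_k)$ of~\eqref{eqn:gu.w}, once one identifies the scattered subword $u = w_{\pi_1}\cdots w_{\pi_d}$ with the sequence of descent-triggering letters. Second, I would observe that the diagonal variables in $x(w)_{jk}$ naturally separate by ``level'': the off-diagonal variable used at the $i$-th descent and the diagonal variables accumulated at each level can be relabeled by the single index $x(s,i)$, showing that $x(w)_{jk}$, after the relabeling, is built from exactly the $g_u^w$ with $|u| = k-j$. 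Third, using Lemma~\ref{lem:newton}, equality of the functions $[x(w)_{jk}] = [x(v)_{jk}]$ is equivalent to equality of Newton polytopes, and I would argue the relabeling is an invertible affine change of coordinates, so it preserves the $\simeq$-relation and hence equality of polytopes is preserved in both directions. Running over all $j \leq k$ with $k - j = d$ for $1 \leq d \leq n-1$ then matches exactly the condition $[g_u^w] = [g_u^v]$ for all $u \in \Sigma^d$, $1 \leq d \leq n-1$.

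\textbf{Main obstacle.} The delicate part is the bookkeeping that shows the variables $x(s,i,j)$ of the matrix formulation can be consistently collapsed to the variables $x(s,k)$ of~\eqref{eqn:gu.w} \emph{without} losing or conflating monomials: one must verify that two distinct paths contributing to $x(w)_{jk}$ map to distinct monomials in the $g_u^w$ formulation, and conversely, so that the reindexing is a genuine bijection on supports rather than merely a surjection. In particular, I would need to check that the grouping by scattered subword $u$ is compatible across the different entries $x(w)_{jk}$ sharing a common value of $d = k-j$, and that no cancellation or collision occurs when several off-diagonal descents use the ``same'' relabeled variable at different original levels. I expect this is where \cite[Theorem 5.2]{DJK17} does most of the work, and the contribution here is the reformulation in terms of the $g_u^w$ and the passage to Newton polytopes via Lemma~\ref{lem:newton}; the remaining effort is to make the affine relabeling explicit and confirm it is invertible, so that equality of signatures transfers faithfully in both directions of the stated equivalence.
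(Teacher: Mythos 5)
Your plan rests on two claims that do not survive scrutiny. First, the descent count is wrong: in $\ut{n}$ \emph{every} entry on or below the diagonal of $x(s)$ is a variable, so a path contributing to $x(w)_{jk}$ may drop several levels in a single step. Consequently the monomials of $x(w)_{jk}$ are indexed by scattered subwords of every length $d$ with $1 \leq d \leq k-j$, together with a choice of intermediate levels $\rho$, not by subwords of length exactly $k-j$. (This is visible in the paper's own argument, where the expansion coming from \cite[Lemma 5.1]{DJK17} sums over $d = 1, \ldots, j-i$ and over all level sequences $\rho \in \mathcal{P}^{n}_d$ with prescribed endpoints.) So the proposed entry-by-entry matching of $[x(w)_{jk}]=[x(v)_{jk}]$ with equality of the single $d$-signature for $d=k-j$ is not available, and the bookkeeping you build on it breaks down.

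Second, and more seriously, the "relabeling" $x(s,i,j) \mapsto x(s,i)$ is not an invertible affine change of coordinates, so equality of Newton polytopes cannot be transported "in both directions" by it. The off-diagonal variables $x(s,i,j)$, $i>j$, appear in $x(w)_{jk}$ but not at all in $g_u^w$; moreover $g_u^w$ omits the factor recording the letters occurring \emph{after} the last marked position (compare \eqref{eqn:gu.w}, where the product stops at $k=|u|$, with $f^w_{u,\rho}$ in Theorem \ref{thm:djk}, where it runs to $|u|+1$). Passing from the matrix entries (or from the $f^w_{u,\rho}$) to the $g_u^w$ is therefore a genuine projection/specialization, and the implication [$g$'s equal $\Rightarrow w \sim_n v$] — the substantive half of the theorem — is exactly the assertion that no information is lost. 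Your proposal defers this to a "bijection on supports" check, but the map is not a bijection, and nothing in the proposal recovers the lost data. The paper closes this gap by: (i) extracting $c(w)=c(v)$ from the degree-one polynomials; (ii) invoking \cite[Lemma 2.5]{DJK17} to restrict to morphisms normalized so that $\varphi(s)_{n,n}=0$, which makes the dropped tail factors evaluate to $0$; and (iii) an induction on $n$ with a change of variables in $\rho$ to handle the non-corner entries. An alternative repair would be to note that once contents agree, the tail exponents are affine functions of the remaining exponents, so $\Newt(f^w_{u,\tau_d})$ is the image of $\Newt(g^w_u)$ under a content-dependent affine injection — but some such argument must be supplied, and your proposal neither derives content equality nor addresses the missing tail factors or off-diagonal variables, which is precisely where the content of the theorem lies.
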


The right hand condition of (\ref{eqn:to.prove}) requires $\sum_{d=1}^{n-1} |\Sigma|^d$ pairs of tropical polynomial functions in $(n-1)|\Sigma|$ variables to be equal. Theorem \ref{thm:reduce_checks} is a simplification of \cite[Theorem 5.2]{DJK17}, which provides a similar condition requiring equality of $\sum_{d=0}^{n-1} \binom{n}{d}|\Sigma|^d$ pairs of tropical polynomials in $n|\Sigma|$ variables. Indeed, our proof proceeds by showing that the two theorems are equivalent. For ease of reference, let us recall \cite[Theorem 5.2]{DJK17}.

\begin{theorem}\cite[Theorem 5.2]{DJK17}\label{thm:djk}
For $0 \leq d \leq n-1$, let $\mathcal{P}^n_d$ denote the set of all increasing sequences of length $d+1$ in $\{1, \ldots, n\}$. Recalling the definition of the set $I$ from \eqref{eqn:index.i}, given $u \in \Sigma^d$, $\rho \in \mathcal{P}^n_d$, and $w \in \Sigma^+$, define
$$f_{u, \rho}^w := \bigoplus_{ \pi \in {I}} \bigodot_{s \in \Sigma} \bigodot_ {k=1}^{|u|+1} x(s, \rho_k)^ {N_s^w(\pi_{k-1}, \pi_{k})}, \mbox{ in variables }x(s,\rho_i), s \in \Sigma, 1 \leq i \leq d+1.$$
For $w,v \in \Sigma^+$, 
$w \sim_n v \Leftrightarrow [f_{u,\rho}^w]=[f_{u, \rho}^v] \, \mbox{ for all } u \in \Sigma^d, \rho \in \mathcal{P}^n_d, 0 \leq d \leq n-1$.
\end{theorem}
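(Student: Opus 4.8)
The plan is to reduce to the entrywise criterion \eqref{cor:simple} and then expand each tropical polynomial $x(w)_{jk}$ combinatorially. By \eqref{cor:simple}, $w \sim_n v$ holds precisely when $[x(w)_{jk}] = [x(v)_{jk}]$ for every $1 \le j \le k \le n$, so it suffices to match these $\binom{n+1}{2}$ polynomial pairs. Writing $x(w)_{jk}$ through the standard correspondence between tropical matrix products and optimal paths, its support is indexed by monotone paths $j = i_0 \le i_1 \le \cdots \le i_\ell = k$ in $\{1,\dots,n\}$, where step $t$ reads the letter $w_t$ and contributes the variable $x(w_t, i_{t-1}, i_t)$. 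I would record each such path by its \emph{ascents}: the positions $\pi_1 < \cdots < \pi_d$ where $i_{t-1} < i_t$, the strictly increasing sequence of visited levels $\rho = (\rho_1 = j < \cdots < \rho_{d+1} = k) \in \mathcal{P}^n_d$, and the scattered subword $u = w_{\pi_1}\cdots w_{\pi_d} \in \Sigma^d$ read at those ascents. Here I use that $x(s,\rho_k)$ denotes the diagonal entry $x(s,\rho_k,\rho_k)$, so that the ``hold'' steps at a fixed level $\rho_k$ contribute only diagonal variables.

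With this bookkeeping the monomial of a path factors into an off-diagonal part $m_{u,\rho} := \bigodot_{r=1}^d x(u_r, \rho_r, \rho_{r+1})$, which depends only on $(u,\rho)$, and a diagonal part $\bigodot_{s\in\Sigma}\bigodot_{k=1}^{d+1} x(s,\rho_k)^{N_s^w(\pi_{k-1},\pi_k)}$, which ranges over $\pi \in I$ as in \eqref{eqn:index.i}. Summing tropically over all paths I would obtain the decomposition
\begin{equation*}
x(w)_{jk} \;=\; \bigoplus_{d\ge 0}\ \bigoplus_{\substack{u\in\Sigma^d,\ \rho\in\mathcal{P}^n_d\\ \rho_1=j,\ \rho_{d+1}=k}} m_{u,\rho}\odot f_{u,\rho}^w ,
\end{equation*}
which exhibits $x(w)_{jk}$ as a tropical sum, indexed by $(u,\rho)$, of the polynomials $f^w_{u,\rho}$ weighted by monomials in the off-diagonal variables. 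The direction $(\Leftarrow)$ is then immediate: if $[f^w_{u,\rho}]=[f^v_{u,\rho}]$ for every $u,\rho,d$ (including the convention that $I=\emptyset$ gives the $-\infty$ function), the two decompositions agree term by term, so $[x(w)_{jk}]=[x(v)_{jk}]$ for all $j,k$, and \eqref{cor:simple} gives $w\sim_n v$.

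The substance is the converse: recovering each individual piece $f^w_{u,\rho}$ from the single function $x(w)_{jk}$. I expect this to be the main obstacle, since equality of the full polynomials is a statement about their joint convex hull, not about each $(u,\rho)$-block separately. The key is to separate the blocks using the off-diagonal variables. Fix a target $(u_0,\rho_0)$ of length $d$ with $\rho_0$ running from $j$ to $k$, and evaluate at off-diagonal inputs equal to a large constant $M$ on exactly the labeled edges $((u_0)_r, (\rho_0)_r, (\rho_0)_{r+1})$ of $(u_0,\rho_0)$ and $0$ elsewhere. A short combinatorial lemma --- that two distinct monotone $j$--$k$ paths have incomparable labeled-edge sets, because a proper sub-multiset of the edges of a simple increasing chain cannot itself join $j$ to $k$ --- shows that the block $(u_0,\rho_0)$ collects off-diagonal weight $Md$ while every other block collects at most $M(d-1)$. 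Hence, for any fixed diagonal input and $M$ taken large enough (for $w$ and $v$ simultaneously, using that there are finitely many blocks), the value of $x(w)_{jk}$ equals $Md$ plus the value of $f^w_{u_0,\rho_0}$ at that diagonal input, and likewise for $v$; comparing, the presence of the block must coincide for $w$ and $v$, and when present $f^w_{u_0,\rho_0}$ and $f^v_{u_0,\rho_0}$ agree there. Letting the diagonal input vary yields $[f^w_{u_0,\rho_0}]=[f^v_{u_0,\rho_0}]$ by Lemma~\ref{lem:newton}. Ranging over all $(j,k)$ and all targets produces every $\rho\in\mathcal{P}^n_d$ and $u\in\Sigma^d$ for $0\le d\le n-1$, completing the converse.
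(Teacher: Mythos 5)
Your proof is correct, but be aware that the paper contains no proof of this statement to compare against: Theorem~\ref{thm:djk} is imported verbatim from [DJK17, Theorem 5.2] with only a citation. Measured against what the paper does use, your decomposition $x(w)_{jk} = \bigoplus_{d}\bigoplus_{u,\rho}\, m_{u,\rho}\odot f^{w}_{u,\rho}$ is exactly the path expansion attributed to [DJK17, Lemma 5.1] and reused inside the paper's proof of Theorem~\ref{thm:reduce_checks}, so your $(\Leftarrow)$ direction coincides with the known route; there is also no circularity, since \eqref{cor:simple} is stated in the paper as immediate from the definitions, independently of Theorem~\ref{thm:djk}. The genuinely independent content is your converse, and it is sound: since $\rho$ is strictly increasing, the edge intervals $[\rho_r,\rho_{r+1}]$ of a block tile $[j,k]$, so a chain containing all $d$ labelled edges of $(u_0,\rho_0)$ must equal it (no proper subfamily of a chain's edges can again join $j$ to $k$, and edges within a block are pairwise distinct); hence under your evaluation the target block collects weight $Md$ while every other block, of whatever length $d'$, collects at most $M(d-1)$. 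Choosing $M$ \emph{after} fixing the diagonal input $y$ is legitimate because \eqref{cor:simple} gives equality of the functions $x(w)_{jk}$ and $x(v)_{jk}$ at every real evaluation, and your presence/absence dichotomy correctly handles the case $I=\emptyset$. This buys a short, self-contained argument from \eqref{cor:simple} alone, where [DJK17] work through morphisms into $\ut{n}$. Two cosmetic remarks: the closing appeal to Lemma~\ref{lem:newton} is unnecessary, since agreement at every real diagonal input is, by the paper's definition of $[\,\cdot\,]$, already the statement $[f^{w}_{u_0,\rho_0}]=[f^{v}_{u_0,\rho_0}]$; and the case $d=0$ (possible only when $j=k$) needs no separation at all, as the $(j,j)$ entry consists of a single block --- your argument covers it vacuously.
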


\begin{proof}[\textbf{Proof of Theorem \ref{thm:reduce_checks}}]
For each positive integer $d$ let $\tau_d=(1,\ldots, d+1)$. If $w \sim_n v$, then it follows from Theorem \ref{thm:djk} that for each $u \in \Sigma^d, 1 \leq d \leq n-1$ we have in particular $[f_{u, \tau_d}^w]=[f_{u, \tau_d}^v]$. Evaluating at $x(s,d+1)=0$ for all $s \in \Sigma$ then yields $[g_u^w]=[g_u^v]$. For the converse, first notice that if $[g_a^w]=[g_a^v]$, then setting $x(a,1)=1$ and $x(s,1)=0$ for all other $s \in \Sigma$ yields $|w|_a-1=|v|_a-1$. Thus for each $n \geq 2$,  the right hand side of \eqref{eqn:to.prove} implies that $w$ and $v$ have the same content. By \cite[Lemma 2.5]{DJK17} it will therefore suffice to show that the right hand side of \eqref{eqn:to.prove} implies that $\varphi(w)=\varphi(v)$ for all morphisms $\varphi: \Sigma^+ \rightarrow \ut{n}$ such that for all $s \in \Sigma$ and $1 \leq i \leq n-1$, $\varphi(s)_{i,i} = y(s,i) \in \mathbb{R}$ and $\varphi(s)_{n,n}=0$. In this case it is easy to see that  for each $1 \leq j< i \leq n$ we have $\varphi(w)_{i,j} = -\infty =\varphi(v)_{i,j}$ and $\varphi(w)_{j,j} = \bigodot y(s,j)^{|w|_s} = \bigodot y(s,j)^{|v|_s} =\varphi(v)_{j,j}$. It  remains to show that all entries above the diagonal agree;  we shall proceed by induction on $n$. Write $[[g_u^w]]$ to denote evaluation of the function corresponding to the tropical polynomial $g_u^w$ at $x(s,i)=y(s,i)$ for all $s \in \Sigma$ and $1 \leq i \leq n-1$. For $n=2$, it is then easy to see that
\begin{eqnarray*}
\varphi(w)_{1,2} = \bigoplus_{a \in \Sigma} \varphi(a)_{1,2} \odot [[g_a^w]] = \bigoplus_{a \in \Sigma} \varphi(a)_{1,2} \odot [[g_a^w]] =\varphi(v)_{1,2},
\end{eqnarray*}
as required. Now suppose that \eqref{eqn:to.prove} holds for all $2 \leq n \leq k$. If $[g_u^w]=[g_u^v]$ for all $u \in \Sigma^d, 1\leq d \leq k$, then by our inductive hypothesis we have $w \sim_k v$ which by Theorem \ref{thm:djk} yields $[f_{u, \rho}^w]=[f_{u,\rho}^v]$ for all $ u \in \Sigma^d, \rho \in \mathcal{P}^k_d, 0\leq d \leq k-1$. By applying simple changes of variables of the form $x(s,\rho_i) \mapsto x(s, \rho'_i)$, where $\rho$ and $\rho'$ are increasing sequences of the same length over possibly different sets, we may further deduce that $[f_{u, \rho}^w]=[f_{u,\rho}^v]$ for all $ u \in \Sigma^d, \rho \in \mathcal{P}^{k+1}_d, 0\leq d \leq k-1$. Thus if $\varphi: \Sigma^+ \rightarrow \ut{k+1}$ is any morphism and $1 \leq j-i<k$, arguing as in \cite[Lemma 5.1]{DJK17} immediately yields
\begin{eqnarray*}
\varphi(w)_{i,j} &=& \bigoplus_{d=1}^{j-i} \bigoplus_{u \in \Sigma^d} \bigoplus_{\substack{\rho \in \mathcal{P}^{k+1}_d,\\ \rho_0=i, \rho_d=j}} \bigodot_{t=1}^{d} \varphi(u_t)_{\rho_{t-1},\rho_t}\odot  [[f_{u, \rho}^w]]\\
&=& \bigoplus_{d=1}^{j-i} \bigoplus_{u \in \Sigma^d} \bigoplus_{\substack{\rho \in \mathcal{P}^{k+1}_d,\\ \rho_0=i, \rho_d=j}} \bigodot_{t=1}^{d} \varphi(u_t)_{\rho_{t-1},\rho_t}\odot  [[f_{u, \rho}^v]]= \varphi(v)_{i,j}.\\
\end{eqnarray*}
The remaining entries in position $(1,k+1)$ can be similarly expressed and it is easy to see that these will differ only if $[f_{u,\tau_k}^w]$ and $[f_{u, \tau_k}^v$] differ for some word $u$ of length $k$. Since we may assume that $\varphi(s)_{k+1, k+1}=0$ for all $s \in \Sigma$, it follows that a difference occurs in position $(1, k+1)$ only if  the functions corresponding to $f_{u,\tau_k}^w$ and $f_{u, \tau_k}^v$ differ when evaluated at $x(s,k+1)=0$ for all $s \in \Sigma$. But it is straightforward to check that such an evaluation yields the functions corresponding to polynomials $g_u^w$ and $g_u^v$ respectively, which agree by assumption. This completes the proof of \eqref{eqn:to.prove}. The second statement now follows by application of Lemma \ref{lem:newton}, since each polynomial $g_u^w$ has trivial coefficients.
\end{proof}

\section{Computing support sets}
\label{sec:support}
\subsection{Staircase paths, lattice polytopes and degree $1$ supports}
We begin by showing that the degree-one signature of a word $w$ can be easily read off from an associated staircase path.  From Theorem \ref{thm:reduce_checks}, this gives a simple way to construct and verify identities in~$\ut{2}$. Throughout this section, let $\Sigma = \{a_1, \ldots, a_m\}$ and to simplify notation let us write $g_{i}^w$ for the formal tropical polynomial obtained from $g_{a_i}^w$ by the variable substitution $x(a_k,1) \mapsto x_k$. Let $\trianglelefteq$ denote the partial order on $\mathbb{N}_{\geq 0}^m$ given by $p \trianglelefteq q$ if $p_i \leq q_i$ for all $i=1, \ldots, m$.  Recall that for $w \in \Sigma^+$, we write $c(w)=(|w|_{a_1}, \ldots, |w|_{a_m}) \in \mathbb{N}_{\geq 0}^m$  to denote the content of $w$. For $c \in \mathbb{N}_{\geq 0}^m$ let $W(c) = \{w: c(w)=c\}$ be the set of words with prescribed content $c$.

\begin{definition}\label{defn:gamma}
The path $\gamma^w$ of a word $w \in W(\ell_1,\dots,\ell_m)$ is the set of lattice points of the staircase walk from $(0,\dots,0) \in \N_{\geq 0}^m$ to $(\ell_1,\dots,\ell_m) \in \N_{\geq 0}^m$, formed by reading the word $w$ from left to right and moving by the $i$th unit vector $\mathbf{e}_i:=(0, \ldots, 0, 1, 0 \ldots 0)$ when one reads the letter $a_i$, for $i = 1, \dots, m$. For each $i = 1, \dots, m$, the $a_i$-height of $\gamma^w$ is the set
\begin{equation}\label{eqn:gamma.wa}
\gamma^{w,a_i} = \{\gamma_j^{w, a_i}: 0 \leq j \leq \ell_i-1\}, \mbox{ where }\;\; \gamma_j^{w, a_i} := {\rm max}\{ p \in \gamma^w: p_i = j\}.
\end{equation}
\end{definition}

\begin{lemma}\label{lem:support}
For $i = 1, \dots, m$, the support of $g_{i}^w$ is $\gamma^{w,a_i} \subseteq  \mathbb{N}_{\geq 0}^m$.
\end{lemma}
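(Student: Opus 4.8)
The plan is to unpack both sides as explicit sets of content vectors of prefixes of $w$ and then match them point by point. First I would record the key dictionary between the path and prefixes: by Definition~\ref{defn:gamma}, after reading the first $t$ letters of $w$ the walk $\gamma^w$ sits at the lattice point $c(w_1\cdots w_t)=(|w_1\cdots w_t|_{a_1},\dots,|w_1\cdots w_t|_{a_m})$, so the lattice points of $\gamma^w$ are exactly the prefix contents $c(w_1\cdots w_t)$ for $0\le t\le |w|$. Since each step of the walk increases exactly one coordinate, these points form a chain under $\trianglelefteq$; consequently any nonempty subset of them has a unique $\trianglelefteq$-maximum, so the $\gamma_j^{w,a_i}$ of \eqref{eqn:gamma.wa} are well defined.

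Next I would unpack the support of $g_i^w$ in the case $d=1$. Here $u=a_i$, so $|u|=1$ and the index set $I$ of \eqref{eqn:index.i} consists of triples $(\pi_0,\pi_1,\pi_2)=(0,\pi_1,|w|+1)$ with $w_{\pi_1}=a_i$; that is, $\pi_1$ ranges over the positions of $a_i$ in $w$. After the substitution $x(a_k,1)\mapsto x_k$, the monomial indexed by $\pi_1$ has exponent vector $\bigl(N_{a_1}^w(0,\pi_1),\dots,N_{a_m}^w(0,\pi_1)\bigr)$, and $N_{a_k}^w(0,\pi_1)=|w_1\cdots w_{\pi_1-1}|_{a_k}$. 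Hence the support of $g_i^w$ is precisely $\{\,c(w_1\cdots w_{\pi_1-1}): w_{\pi_1}=a_i\,\}$, the set of prefix contents taken immediately before each occurrence of $a_i$.

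To match the two sets I would list the positions of $a_i$ as $q_1<\cdots<q_{\ell_i}$ and set $q_{\ell_i+1}=|w|+1$. The prefix $w_1\cdots w_{q_{r}-1}$ has exactly $r-1$ letters $a_i$, so its $a_i$-coordinate is $r-1$; writing $j=r-1$ shows the support of $g_i^w$ is $\{\,c(w_1\cdots w_{q_{j+1}-1}): 0\le j\le \ell_i-1\,\}$, with $\ell_i$ distinct points, one for each value $j$ of the $a_i$-coordinate. On the other side, the lattice points of $\gamma^w$ with $i$-th coordinate equal to $j$ are exactly the contents $c(w_1\cdots w_t)$ for $q_j\le t\le q_{j+1}-1$ (taking $q_0=0$); by the chain property above the $\trianglelefteq$-maximum of this run is attained at the right endpoint $t=q_{j+1}-1$, giving $\gamma_j^{w,a_i}=c(w_1\cdots w_{q_{j+1}-1})$. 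This is identical to the support point of $g_i^w$ with $a_i$-coordinate $j$, so the two sets coincide.

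I do not expect a genuine obstacle here: the content is a careful bookkeeping of indices. The one point requiring attention is the off-by-one correspondence, namely that ``the $\trianglelefteq$-maximal point of $\gamma^w$ on the run where the $i$-th coordinate equals $j$'' is the prefix content ending just before the $(j+1)$-th occurrence of $a_i$, which is what the support monomial of $g_i^w$ records; the monotonicity of the staircase walk is exactly what guarantees the maximum sits at this right endpoint and that the two indexings by $j\in\{0,\dots,\ell_i-1\}$ align.
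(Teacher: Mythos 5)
Your proposal is correct and follows essentially the same route as the paper's proof: both unpack the support of $g_i^w$ as the set of prefix contents taken immediately before each occurrence of $a_i$, and then identify each such point as the $\trianglelefteq$-maximum of the set of path points with $i$-th coordinate $j$ (you do this by exhibiting the fiber as a run of consecutive prefixes and taking its right endpoint; the paper equivalently notes that any path point strictly exceeding the candidate comes from a longer prefix and so has $i$-th coordinate exceeding $j$). The bookkeeping, including the off-by-one correspondence and the well-definedness of the $a_i$-height via the chain property, matches the paper's argument.
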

\begin{proof}
Since $\gamma^w$ is a finite increasing sequence of points in $\mathbb{N}_{\geq 0}^m$ with respect to the order $\trianglelefteq$, every nonempty subset of $\gamma^w$ has a maximal element with respect to this order. Thus, the $a_i$-height of $\gamma^w$ is well-defined. For $w \in \Sigma^\ell$, let $w(j) = w_1 \cdots w_j$ denote the length $j$ prefix of $w$. By definition,
\begin{eqnarray*}
g_{i}^w &=& \bigoplus_{h: w_h=a_i} \bigodot_{k=1}^m x_k^{c(w(h-1))_k},\\
\end{eqnarray*}
where $c(w(h-1)) \in \mathbb{N}_{\geq 0}^m$ records the content of the length $h-1$ prefix of $w$.
Suppose that $a_i$ occur in positions $h_0, \ldots, h_{|w|_{a_i} -1}$ of the word $w$. Then the support of $g_i^w$ is $\{c(w(h_j-1)): 0 \leq j \leq |w|_{a_i} -1 \}$. Each such point clearly lies on the path $\gamma^w$ (since it records the content of a  prefix of $w$) and by definition the $i$th coordinate of $c(w(h_j-1))$ is equal to $j$ (since this is the number of occurrences of $a_i$ before the $(j+1)$th occurrence of $a_i$). Moreover, since $w=w(h_j-1)a_i w'$ for some $w' \in\Sigma^*$ it follows that each point $p \in \gamma^w$ that strictly exceeds $c(w(h_j-1))$ in some coordinate is the content of a prefix of length greater than or equal to $h_j$ and hence $p_i > j$. Thus $c(w(h_j -1)) = {\rm max}\{p \in \gamma^w : p_i=j\}$, as required.
\end{proof}

\begin{corollary}
\label{polytopes}
Let $w,v \in \Sigma^+$. Then $w \sim_2 v$ if and only if $\conv(\gamma^{w,a_i})=\conv(\gamma^{v,a_i})$ for all $i=1, \ldots, m$.
\end{corollary}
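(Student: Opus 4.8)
The plan is to obtain the statement as a direct specialisation of the machinery already developed, reading off each equivalence from a previously stated result. First I would set $n=2$ in Theorem~\ref{thm:reduce_checks}: since $n-1=1$, only the degree $d=1$ polynomials survive, so the right-hand side becomes $[g_u^w]=[g_u^v]$ for every single-letter word $u\in\Sigma^1=\Sigma$. Writing $u=a_i$ and applying the invertible change of variables $x(a_k,1)\mapsto x_k$ (a mere relabelling, hence harmless for functional equality), this reads $[g_i^w]=[g_i^v]$ for all $i=1,\dots,m$. Thus
\[
w\sim_2 v \iff [g_i^w]=[g_i^v] \text{ for all } i=1,\dots,m.
\]

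Next I would convert each functional equality into a polytope equality. Every $g_i^w$ is a formal tropical polynomial with trivial coefficients, so Lemma~\ref{lem:newton} gives $[g_i^w]=[g_i^v]\iff \Newt(g_i^w)=\Newt(g_i^v)$. The Newton polytope is by definition the convex hull of the support, and Lemma~\ref{lem:support} identifies the support of $g_i^w$ as the $a_i$-height $\gamma^{w,a_i}$; hence $\Newt(g_i^w)=\conv(\gamma^{w,a_i})$, and similarly for $v$. Stringing these equivalences together over $i=1,\dots,m$ delivers exactly $w\sim_2 v \iff \conv(\gamma^{w,a_i})=\conv(\gamma^{v,a_i})$ for all $i$.

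Because each link in this chain is simply an invocation of an earlier equivalence, I do not anticipate any genuine difficulty. The one point I would be careful to spell out is that the substitution $x(a_k,1)\mapsto x_k$ is a bijection on the variables, so it preserves both the property of defining the same function and the relevant convex hulls up to the implicit identification of coordinate spaces; noting this keeps the passage from $g_{a_i}^w$ to $g_i^w$ rigorous and the resulting chain of equivalences transparent.
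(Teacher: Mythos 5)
Your proposal is correct and follows exactly the route the paper intends: the corollary is stated without proof precisely because it is the immediate concatenation of Theorem~\ref{thm:reduce_checks} with $n=2$, Lemma~\ref{lem:newton}, and Lemma~\ref{lem:support}, which is the chain you spell out. Your added remark that the variable relabelling $x(a_k,1)\mapsto x_k$ is harmless is a sensible touch of rigour but changes nothing of substance.
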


\begin{example}
\label{adjan}
Figure \ref{fig:adjan.5} illustrates two minimal length identities of the bicyclic monoid \cite{A66}. The left plot shows the paths $\gamma^w$ (black) and $\gamma^v$ (red) in the identity
$$w:=abba \; ab \; abba \; \sim_2 \;  abba \; ba \; abba =:v.$$ 
It is easy to see that $\conv(\gamma^{w,a})=\conv(\gamma^{v,a})$ and $\conv(\gamma^{w,b})=\conv(\gamma^{v,b})$ (the polygons shown in blue and green). Similarly, the right plot shows the paths $\gamma^w$ (black) and $\gamma^v$ (red) of the words in the identity  
$$w=abba \; ab \; baab\; \sim_2 \;abba \; ba \; baab=: v.$$
\end{example}
\begin{figure}[h!]
\includegraphics[width=0.48\textwidth]{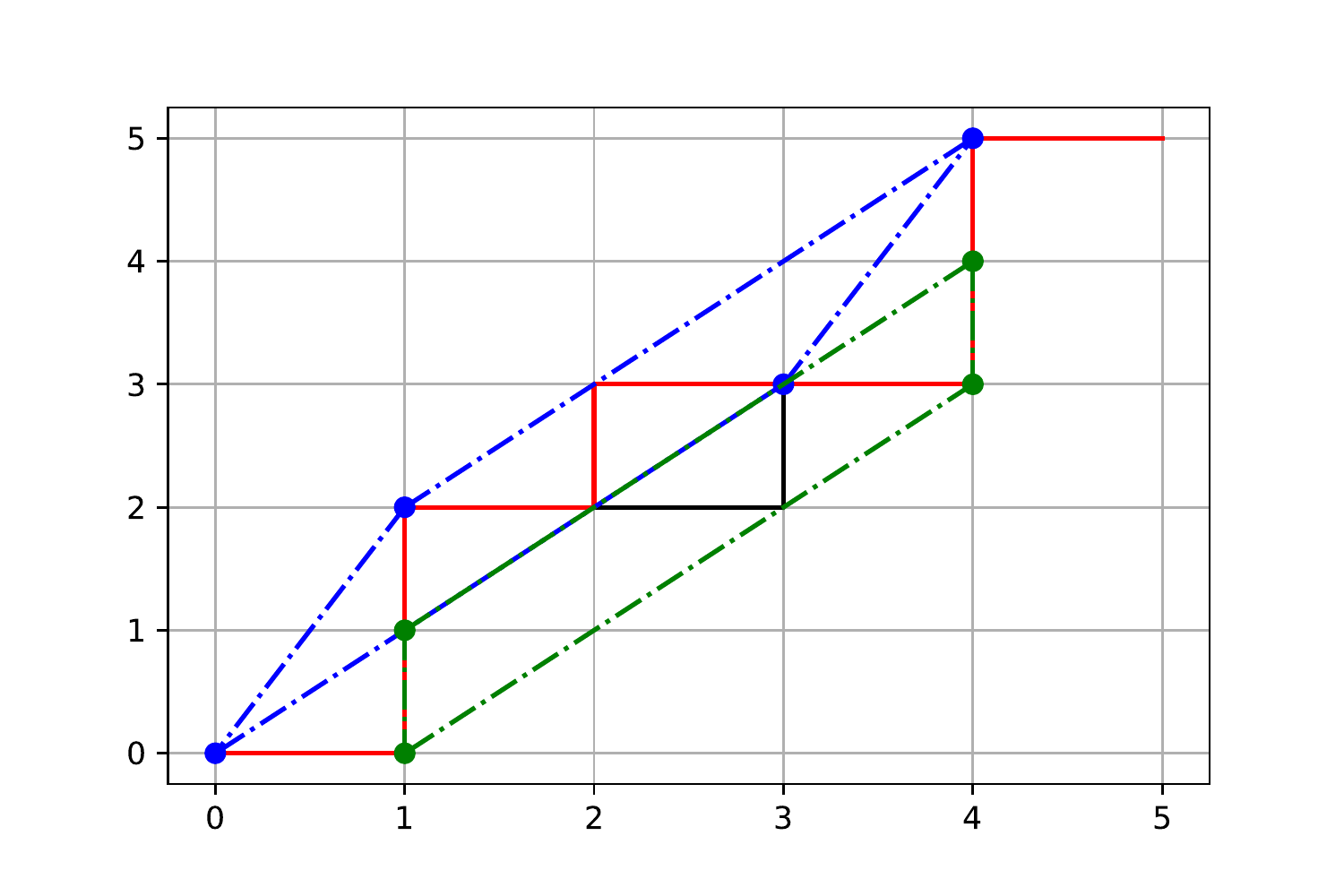} \, 
\includegraphics[width=0.48\textwidth]{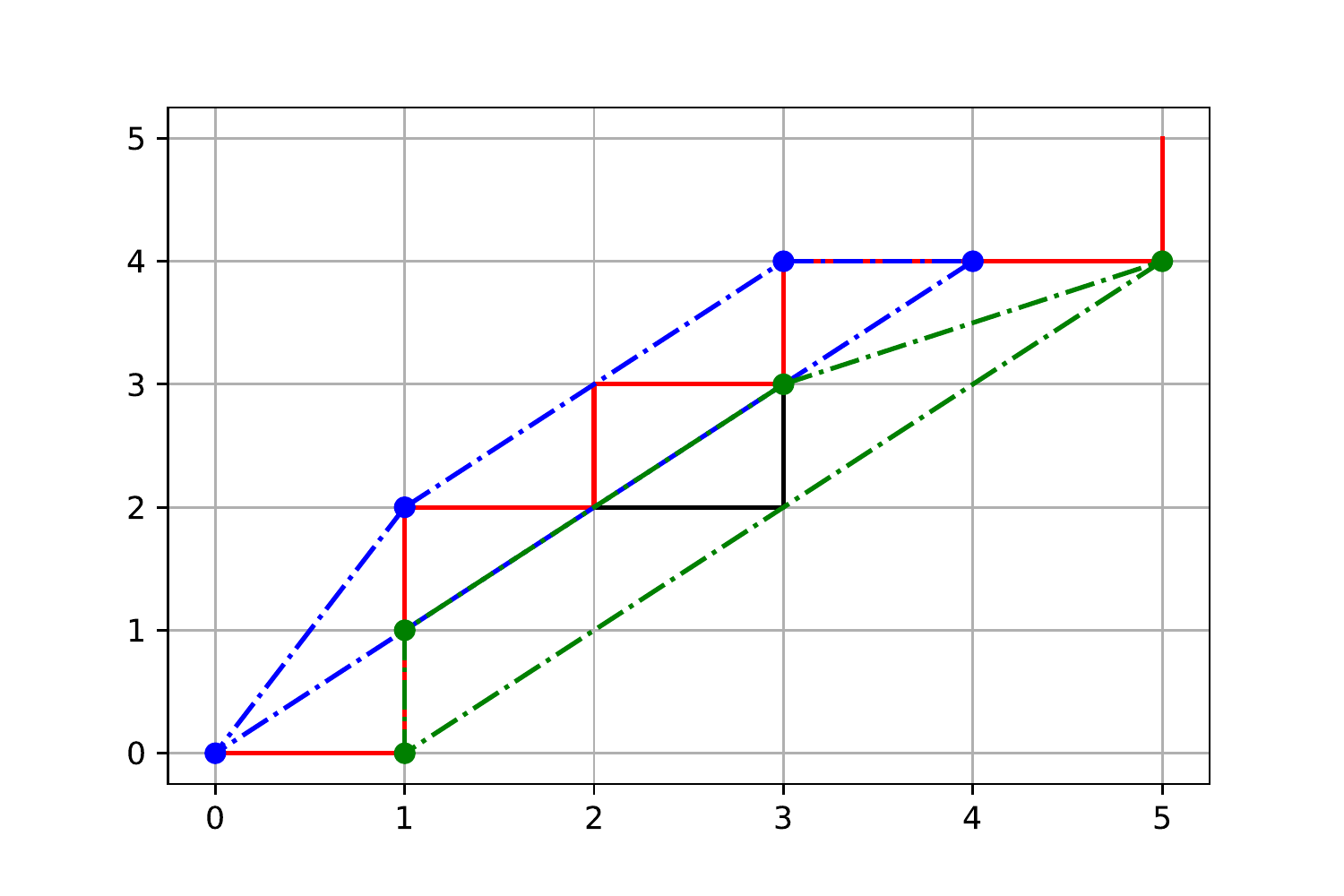}
\caption{Visualisation of the identities of Example \ref{adjan}.}
\label{fig:adjan.5}
\end{figure}
\begin{example}
The degree-$1$ signature of $w=acbaacbcb$, along with its path is illustrated in Figure \ref{sgntre}. Note that each point of $\gamma^w$, save the final one, is a vertex of one of the polytopes in its signature. So the path, and hence the word, is uniquely determined from this signature, showing that $w$ is an isoterm for $\ut{2}$.
\end{example}

\subsection{Degree $n$ support sets}
Fix an alphabet $\Sigma = \{a_1, \ldots, a_m\}$ and a word $w \in \Sigma^+$. For each scattered subword $u$ of $w$ with $|u|=d$, let $\gamma^{u}\subseteq \mathbb{N}_{\geq 0}^{md}$ denote the nonempty support set of the polynomial $g_u^w$. Proposition \ref{prop:transformation} shows how these support sets can be recursively computed from the staircase path of $w$. We begin with an example to illustrate the idea. 

\begin{example}
Let $\Sigma = \{a,b\}$ and consider the support set $\gamma^{ab}$ of the polynomial $g^w_{ab}$ in variables $x(a,1), x(b,1), x(a,2),x(b,2)$. To compute this, we must consider each instance of $ab$ as a scattered subword of $w$. Suppose that for some $0 \leq i \leq |w|_a -1$ and $0 \leq j \leq |w|_b -1$, the $(i+1)$th $a$ occurs before the $(j+1)$th $b$. Then \emph{before} the $(i+1)$th $a$ there are $\gamma^{w,a}_i$ many $a$'s and $b$'s. \emph{Between} the $(i+1)$th $a$ and the $(j+1)$th $b$ there are $\gamma^{w,b}_j - \gamma^{w,a}_i- \mathbf{e}_1$ many $a$'s and $b$'s. Thus  if $(i, i', j', j) \in \gamma^{ab}$ then we have $(i,i') = \gamma^{w,a}_i$, and $(j', j) = \gamma^{w,b}_j - \gamma^{w,a}_i- \mathbf{e}_1$. It follows easily from this observation that
$$\gamma^{ab} \simeq (\gamma^{w,a} \times \gamma^{w,b}) \cap C_a,$$
where the inequalities specifying the polyhedron
$$C_a = \{(i,i',j',j)  \in \R_{\geq 0}^4: i < j', \mbox{ and }i' \leq j \}$$ ensure that the $(j+1)$th $b$ is occurring \emph{after} the $(i+1)$th $a$.
\end{example}

\begin{proposition}
\label{prop:transformation}
Let $w \in \Sigma^+$ and let $u$ be a scattered subword of $w$ of length $d$. Then for each $a_j \in \Sigma$
\begin{eqnarray*}
&&\gamma^{ua_j} \simeq  (\gamma^{u} \times \gamma^{a_j}) \cap C_u,\\
\mbox{ where  }&&C_u =\{y \in \mathbb{R}^{m(d+1)}: \sum_{k=0}^{d-1} y_{mk+r} + |u|_{a_r} \leq y_{md+r}, 1 \leq r \leq m\}.
\end{eqnarray*}
\end{proposition}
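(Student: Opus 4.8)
The plan is to exhibit an explicit invertible affine transformation carrying $(\gamma^u \times \gamma^{a_j}) \cap C_u$ onto $\gamma^{ua_j}$, and to verify the resulting set equality by reading off the combinatorial meaning of each support point in terms of the word $w$. Since $\simeq$ denotes affine equivalence, it suffices to produce one invertible affine map realising the correspondence.

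First I would record what the relevant support points mean. Writing $u = u_1 \cdots u_d$, the definition of $g_u^w$ and the index set $I$ of \eqref{eqn:index.i} say that each point of $\gamma^u$ is the concatenation $(c_0, \ldots, c_{d-1})$ of the gap contents of a realization $\pi_1 < \cdots < \pi_d$ of $u$ as a scattered subword of $w$; here the block $c_k \in \mathbb{N}_{\geq 0}^m$, occupying coordinates $mk+1, \ldots, mk+m$, records the content $N_\cdot^w(\pi_k, \pi_{k+1})$ of the gap between the realized positions $\pi_k$ and $\pi_{k+1}$ (with $\pi_0 = 0$). By Lemma \ref{lem:support}, each point of $\gamma^{a_j}$ is the content $c(w(\sigma - 1))$ of the prefix preceding some occurrence of $a_j$ at a position $\sigma$, these points being in bijection with the occurrences of $a_j$. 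The key bookkeeping observation is that the quantity $Q := \sum_{k=0}^{d-1} c_k + c(u)$ equals the content $c(w(\pi_d))$ of the prefix up to and including the realized $u_d$; since the length of a prefix is the sum of the coordinates of its content, $Q$, and hence the position $\pi_d$ itself, is determined by the point of $\gamma^u$ alone, independently of which realization produced it.

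Next I would define the affine map $\Phi$ on $\mathbb{R}^{m(d+1)}$ that fixes the first $md$ coordinates and replaces the final block $z'$ by $z' - \sum_{k=0}^{d-1} c_k - c(u)$. Its linear part is block lower-triangular with identity diagonal blocks, so $\Phi$ is invertible, and the proposition will follow once I establish $\Phi\big((\gamma^u \times \gamma^{a_j}) \cap C_u\big) = \gamma^{ua_j}$. For a product point $(c_0, \ldots, c_{d-1}, c')$ the defining inequalities of $C_u$ read $\sum_{k=0}^{d-1}(c_k)_r + |u|_{a_r} \leq (c')_r$ for all $r$, that is $Q \trianglelefteq c'$, equivalently $c_d := c' - Q \in \mathbb{N}_{\geq 0}^m$. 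I would then prove $Q \trianglelefteq c' \iff \sigma > \pi_d$: both $Q = c(w(\pi_d))$ and $c' = c(w(\sigma-1))$ are prefix contents of $w$, and prefix contents form a chain under $\trianglelefteq$ ordered by length, so domination holds precisely when $\pi_d \leq \sigma - 1$.

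Finally I would check the two inclusions combinatorially. Given a point of $(\gamma^u \times \gamma^{a_j}) \cap C_u$, the inequality $\sigma > \pi_d$ lets me append the occurrence of $a_j$ at $\sigma$ to any realization of $u$ with gap contents $c_0, \ldots, c_{d-1}$, yielding a realization of $ua_j$ whose final gap has content exactly $c_d = c' - Q$; hence the image under $\Phi$ lies in $\gamma^{ua_j}$. Conversely, every point of $\gamma^{ua_j}$ arises from a realization of $ua_j$ whose restriction to the first $d$ letters realizes $u$ and whose last letter is an occurrence of $a_j$, and reversing the computation places its $\Phi$-preimage in $(\gamma^u \times \gamma^{a_j}) \cap C_u$. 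I expect the main obstacle to be exactly this content-versus-position step: the polyhedron $C_u$ is phrased as a domination of contents, and one must argue that it is equivalent to the positional requirement that $a_j$ occur after $u_d$ (which is what makes appending $a_j$ legitimate), and that this remains unambiguous even though a single point of $\gamma^u$ may come from several distinct realizations. This is precisely what the observation of the second paragraph resolves, since such a point still determines $Q$ and hence $\pi_d$.
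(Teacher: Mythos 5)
Your proof is correct and takes essentially the same route as the paper's: your map $\Phi$ is precisely the inverse of the paper's affine map $\Pi$, and both arguments establish the set equality by the same two inclusions, translating support points into factorisations of $w$ realising $u$, $a_j$ and $ua_j$ as scattered subwords. The only difference is one of detail: you make explicit, via the observation that prefix contents of $w$ form a chain under $\trianglelefteq$, the step the paper leaves implicit, namely that the inequalities of $C_u$ are equivalent to the positional requirement that the occurrence of $a_j$ lie strictly after the realised $u_d$.
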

\begin{proof}
Consider the invertible affine linear map $\Pi: \mathbb{R}^{m(d+1)} \rightarrow \mathbb{R}^{m(d+1)}$ defined by
$$\Pi(p)= \left( 
\begin{array}{c|c}
I_{md} & 0 \\
\hline
\begin{array}{c|c|c}
I_m&\cdots &I_m\\
\end{array}& I_m\\
\end{array}
\right)p + \sum_{r=1}^m |u|_{a_r} \mathbf{e}_{md+r},$$
where $\mathbf{e}_i$ denotes the $i$th unit vector of $\mathbb{R}^{m(d+1)}$. We claim that the image of $\gamma^{ ua_j}$ under $\Pi$ is $(\gamma^{u} \times \gamma^{ a_j}) \,\cap\, C_u$. 
Let $p \in \gamma^{ ua_{j}}$. Then, by definition, there is a factorisation of $w$ of the form:
$$w=i(0)\;u_1\;i(1)\;u_2\; \cdots \;i(d-1)\;u_d \; i(d)\;  a_j \; w',$$
in which $i(0), \ldots, i(d),w' \in \Sigma^*$ and 
$$p_{km+r} = |i(k)|_{a_r} \mbox{ for } 0 \leq k \leq d,\;  1 \leq r \leq m.$$
Then 
$$\Pi(p) = (p_1, \ldots, p_{md}, \; \sum_{k=0}^{d} p_{km+1} +|u|_{a_1}, \ldots, \; \sum_{k=0}^{d} p_{km+m} +|u|_{a_m}).$$ 
Since the first $md$ coordinates describe the first $d$ intermediary factors in our factorisation of $w$, it is immediate that restricting to these coordinates yields a point of $\gamma^{u}$. Noting that the final $m$ coordinates describe the content of the prefix $v:=i(0)\;u_1\;i(1)\;u_2\; \cdots \;i(d-1)\;u_d \; i(d)$ of $w=va_jw'$, it follows that restricting to these coordinates gives a point of $\gamma^{a_j}$.
Finally, $0 \leq p_{md+r}$ for each $r=1, \ldots m$, so $\Pi(p)$ satisfies the inequalities specified by $C_u$. This shows that $\Pi(\gamma^{ ua_j}) \subseteq (\gamma^{u} \times \gamma^{ a_j}) \cap C_u$. For the reverse inclusion, suppose that $y \in (\gamma^{u} \times \gamma^{a_j}) \cap C_u$. By definition of $\gamma^{u}$, there is a factorisation of $w$ of the form:
$$w=i(0)\;u_1\;i(1)\;u_2\; \cdots \;i(d-1)\;u_d \;w',$$
in which $i(0), \ldots, i(d-1),w'\in \Sigma^*$ and
$$y_{km+r} = |i(k)|_{a_r} \mbox{ for } 0\leq k \leq d-1, 1 \leq r \leq m.$$
Similarly, by the definition of $\gamma^{a_j}$, there is another a factorisation of $w$ of the form $w=va_jw''$ in which $v,w'' \in \Sigma^*$ and
$$y_{md+r} = |v|_{a_r} \mbox{ for } 1 \leq r \leq m.$$
The inequalities of $C_u$ then allow us to deduce that $i(0)\;u_1\;i(1)\;u_2\; \cdots \;i(d-1)\;u_d$ must be a prefix of $v$, so that
$$w=i(0)\;u_1\;i(1)\;u_2\; \cdots \;i(d-1)\;u_d \; i(d)\;  a_j \; w'',$$
for some $i(d) \in \Sigma^*$. Taking $p$ to be the point of $\gamma^{ua_j}$ corresponding to this factorisation then yields $\Pi(p)=y$, so $\Pi(\gamma^{ ua_j}) \supseteq (\gamma^{u} \times \gamma^{ a_j}) \cap C_u$. Thus we have equality between these two sets as claimed.
\end{proof}

\begin{remark}
It follows from the previous result that every support set can be built from the degree $1$ supports by taking set products, intersecting with appropriate polyhedra and applying affine linear transformations. Notice however that one cannot build the degree $d$ signature of a word directly from the degree $1$ signature \emph{polytopes} in this way, since although taking convex hulls commutes with affine transformations and set products, it need not commute with taking intersections and so
it may happen that
$${\rm conv}(\gamma^{u} \times \gamma^{a} \cap C_u) \neq {\rm conv}(\gamma^{u} \times \gamma^{a}) \cap C_u.$$
Therefore, Proposition \ref{prop:transformation} gives a clear geometric interpretation as to why $w \sim_n v$ need not imply $w \sim_{n+1} v$.
\end{remark}

\subsection{Properties of $\ut{n}$ identities}
 Reversal of words yields an involution on $\sim_n$ equivalence classes.
\begin{lemma}
Let $w, v \in \Sigma^+$, and let ${\rm rev}(w), {\rm rev}(v)$ denote the words obtained from $w$ and $v$ by reading from right-to-left. Then $w \sim_n v$ if and only if ${\rm rev}(w) \sim_n {\rm rev}(v)$.
\end{lemma}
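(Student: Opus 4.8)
The plan is to exhibit an explicit anti-automorphism of the semigroup $\ut{n}$ and to use it to transport any identity into the corresponding identity on reversed words. Define $\theta \colon \ut{n} \to \ut{n}$ to be reflection of a matrix across its anti-diagonal, that is, $\theta(M)_{ij} := M_{n+1-j,\,n+1-i}$. First I would check that $\theta$ maps $\ut{n}$ into itself: if $M_{ij} = -\infty$ whenever $i < j$, then $\theta(M)_{ij}$ can be finite only when $n+1-j \geq n+1-i$, which is exactly the condition $i \geq j$, so $\theta(M)$ again lies in $\ut{n}$. Plainly $\theta$ is an involution, hence a bijection on $\ut{n}$.

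The key structural step is to verify that $\theta$ reverses products, i.e. $\theta(AB) = \theta(B)\odot\theta(A)$ for all $A,B \in \ut{n}$. Starting from $(AB)_{ij} = \bigoplus_k A_{ik}\odot B_{kj}$ and reindexing the maximisation variable via $k \mapsto n+1-k$ identifies $\theta(AB)_{ij}$ with $(\theta(B)\odot\theta(A))_{ij}$ after a short calculation. Thus $\theta$ is an anti-automorphism of $\ut{n}$. This verification is the heart of the argument; it is routine, and I expect no genuine obstacle beyond bookkeeping with the index substitution.

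With $\theta$ in hand I would argue as follows. Fix an arbitrary morphism $\varphi \colon \Sigma^+ \to \ut{n}$ and, using freeness of $\Sigma^+$, define a second morphism $\varphi'$ by setting $\varphi'(s) := \theta(\varphi(s))$ on each generator $s \in \Sigma$. For a word $w = w_1\cdots w_\ell$, repeated application of the anti-automorphism property gives
$$\varphi'(w) = \theta(\varphi(w_1))\odot \cdots \odot \theta(\varphi(w_\ell)) = \theta\big(\varphi(w_\ell)\odot\cdots\odot\varphi(w_1)\big) = \theta\big(\varphi({\rm rev}(w))\big).$$
Now suppose $w \sim_n v$. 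For the given $\varphi$, apply this identity to the morphism $\varphi'$ to obtain $\varphi'(w) = \varphi'(v)$, that is $\theta(\varphi({\rm rev}(w))) = \theta(\varphi({\rm rev}(v)))$; since $\theta$ is injective this yields $\varphi({\rm rev}(w)) = \varphi({\rm rev}(v))$. As $\varphi$ was arbitrary, we conclude ${\rm rev}(w) \sim_n {\rm rev}(v)$.

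Finally, the converse requires no separate work: because both ${\rm rev}$ and $\theta$ are involutions, the roles of $(w,v)$ and $({\rm rev}(w),{\rm rev}(v))$ are symmetric, and applying the same argument to ${\rm rev}(w) \sim_n {\rm rev}(v)$ recovers $w \sim_n v$. This establishes the equivalence, and hence that reversal descends to a well-defined involution on $\sim_n$ classes.
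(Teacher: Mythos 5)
Your proof is correct and follows essentially the same route as the paper: both use the anti-diagonal reflection as an anti-automorphism of $\ut{n}$, transport an arbitrary morphism $\varphi$ to the auxiliary morphism $s \mapsto \theta(\varphi(s))$, and exploit the involution property to pass between $w \sim_n v$ and ${\rm rev}(w) \sim_n {\rm rev}(v)$. The only difference is presentational: you verify the anti-automorphism and closure properties explicitly and invoke injectivity of $\theta$, where the paper states these as straightforward checks and writes the product identities directly.
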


\begin{proof}
Let $w \sim_n v$ and consider the involution $R: \ut{n} \rightarrow \ut{n}$ obtained by reflecting along the anti-diagonal. It is straightforward to check that $R$ is an anti-automorphism of $\ut{n}$, meaning that for all $A, B \in \ut{n}$ one has $R(AB) = R(B)R(A)$. Let $\varphi: \Sigma^+ \rightarrow \ut{n}$ be a morphism. Consider the morphism $\psi: \Sigma^+ \rightarrow \ut{n}$ constructed from $\varphi$ by defining $\psi(a) = R(\varphi(a))$ for all $a \in \Sigma$. Since $R$ is an involution, we have
\begin{eqnarray*}
\varphi(w_{\ell}\cdots w_1) &=& \varphi(w_{\ell})\cdots \varphi(w_1) = R(\psi(w_{\ell})) \cdots R(\psi(w_1)) = R(\psi(w_1) \cdots \psi(w_{\ell}) )\\
\varphi(v_{\ell}\cdots v_1) &=& \varphi(v_{\ell})\cdots\varphi(v_1) = R(\psi(v_{\ell})) \cdots R(\psi(v_1)) = R(\psi(v_1)\cdots\psi(v_{\ell})).
\end{eqnarray*}
Since $\psi$ is a morphism and $w \sim_n v$, the two expressions are equal.
\end{proof}

\begin{remark}
\label{rev}
In the case where $n=2$, the previous lemma can also be read off from the degree $1$ signature. For each $i = 1, \dots, m$, define the map $T_i^w: \mathbb{R}^m \rightarrow \mathbb{R}^m$ via $p \mapsto  - p + c(w) -\mathbf{e}_i$. It is straightforward to check that $T(\gamma^{w,a_i}) = \gamma^{{\rm rev}(w), a_i}$, and that $T$ is affine linear and invertible. Thus $\conv(\gamma^{w,a_i}) = \conv(\gamma^{v,a_i})$ if and only if $\conv(\gamma^{{\rm rev}(w),a_i}) = \conv(\gamma^{{\rm rev}(v),a_i})$, and the conclusion follows. 
\end{remark}

The following lemma shows how some properties of identities for $\ut{n}$ manifest themselves in terms of geometric properties of the degree $1$ signature of the words. 

\begin{lemma}
\label{props}
Let $i \in \{1,\ldots, m\}$, $n\geq2$, and $w, v \in \Sigma^+$.
\begin{itemize}
\item[(i)] The minimal and maximal elements of $\gamma^{w,a_i}$ are vertices of $\conv(\gamma^{w,a_i})$. In particular, $w \sim_n v$ implies that $c(w)=c(v)$.

\item[(ii)] If $w = w' a_i^k w''$ where $w'$  does not contain $a_i$ and $w''$ does not start with $a_i$, then  $\conv(\gamma^{w,a_i})$ has vertices $c(w')$ and $c(w')+(k-1)\mathbf{e}_i$. In particular, $w \sim_n v$ implies $v=v'a_i^kv''$, where $c(v')=c(w')$ and $v''$ does not start with $a_i$.

\item[(iii)] If $w = w' a_i^k w''$ where $w''$  does not contain $a_i$ and $w'$ does not end with $a_i$, then $\conv(\gamma^{w,a_i})$ has vertices $c(w')$ and $c(w')+(k-1)\mathbf{e}_i$. In particular, $w \sim_n v$ implies $v=v'a_i^kv''$, where $c(v'')=c(w'')$ and $v'$ does not end with $a_i$.

\item[(iv)] If $w=a_{i_1}^{k_1}\cdots a_{i_t}^{k_t}w''$ where the indices $i_1, \ldots, i_t$ are distinct elements  of $\{1,\ldots, m\}$, $k_1, \ldots, k_t, \in \mathbb{N}$, and $w'' \in \Sigma^* \setminus a_{i_t}\Sigma^*$, then each $v$ in the $\sim_n$-class of $w$ has the form  $v=a_{i_1}^{k_1}\cdots a_{i_t}^{k_t}v''$ where $v'' \in \Sigma^*\setminus a_{i_t}\Sigma^*$ with $c(v'')=c(w'')$ and the first letter of $v''$ is equal to the first letter of $w''$.
\end{itemize}
\end{lemma}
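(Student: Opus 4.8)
The common thread is that all four statements depend only on the degree-one part of the signature: by Theorem~\ref{thm:reduce_checks} (equivalently Corollary~\ref{polytopes}), $w\sim_n v$ with $n\geq 2$ forces $\conv(\gamma^{w,a_i})=\conv(\gamma^{v,a_i})$ for every $i$, so I would work throughout with these lattice polygons and exploit that equal polytopes have equal vertex sets. Recall from Lemma~\ref{lem:support} that $\gamma^{w,a_i}$ is a $\trianglelefteq$-chain on the staircase path $\gamma^w$, with $\gamma^{w,a_i}_j$ having $i$-th coordinate exactly $j$. For (i), the bottom and top of this chain, $\gamma^{w,a_i}_0$ and $\gamma^{w,a_i}_{|w|_{a_i}-1}$, uniquely minimise and maximise $x\mapsto\sum_k x_k$ over the finite set $\gamma^{w,a_i}$ and are therefore vertices; equal polytopes share their top vertex, and comparing $i$-th coordinates gives $|w|_{a_i}=|v|_{a_i}$, i.e. $c(w)=c(v)$.

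For (ii) the plan is to identify a face rather than a single vertex. The first-block points of $\gamma^{w,a_i}$ are exactly $c(w'),c(w')+\mathbf{e}_i,\dots,c(w')+(k-1)\mathbf{e}_i$, and these are precisely the minimisers of $\psi(x)=\sum_{r\neq i}x_r$ on $\gamma^{w,a_i}$: since $w''$ does not begin with $a_i$, a non-$a_i$ letter is read before the $(k+1)$-th $a_i$, so every later chain element has strictly larger $\psi$. Hence the $\psi$-minimising face of $\conv(\gamma^{w,a_i})$ is the segment $[c(w'),c(w')+(k-1)\mathbf{e}_i]$, whose two endpoints are vertices. The ``in particular'' clause follows because $w\sim_n v$ makes the two polytopes, and so their $\psi$-minimising faces, coincide, forcing the first $a_i$-block of $v$ to have size $k$ and be preceded by content $c(w')$. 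Part (iii) I would obtain from (ii) by reversal: by Remark~\ref{rev} the affine involution $T^w_i$ maps $\gamma^{w,a_i}$ onto $\gamma^{\mathrm{rev}(w),a_i}$, the block $a_i^k$ of $w$ becomes the \emph{first} $a_i$-block of $\mathrm{rev}(w)$, so (ii) applies to $\mathrm{rev}(w)$ and $T^w_i$ transports the two vertices back to $c(w')$ and $c(w')+(k-1)\mathbf{e}_i$.

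For (iv) I would first pin down the leading blocks. Applying the ``in particular'' part of (ii) with the letter $a_{i_s}$ for each $s=1,\dots,t$ is legitimate because the $i_1,\dots,i_t$ are distinct: the first $a_{i_s}$-block of $w$ has size $k_s$ and is preceded by content $\sum_{r<s}k_r\mathbf{e}_{i_r}$, so $v=v'_s\,a_{i_s}^{k_s}v''_s$ with $c(v'_s)=\sum_{r<s}k_r\mathbf{e}_{i_r}$. This content has total length $\sum_{r<s}k_r$, so $v'_s$ is forced to be that initial segment of $v$; bootstrapping on $s$ (the step for $s$ determines the prefix of $v$ through the $a_{i_s}$-block) then shows that $v$ begins with $a_{i_1}^{k_1}\cdots a_{i_t}^{k_t}$. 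That $c(v'')=c(w'')$ is immediate from $c(v)=c(w)$, and ``$v''$ does not start with $a_{i_t}$'' is the $s=t$ conclusion of (ii).

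The hard part is that the first letters of $w''$ and $v''$ agree, and here the natural vertex argument breaks down: the point $c(a_{i_1}^{k_1}\cdots a_{i_t}^{k_t})$ need not be a vertex of $\conv(\gamma^{w,a_j})$, where $a_j$ is the first letter of $w''$ (for instance $w=ababa$ gives three collinear points in $\gamma^{w,a}$, with this point in the middle). I would instead use that $\sim_n$ is preserved by every letter homomorphism, in particular by the map $\phi$ deleting all letters except the first letters $a_j$ of $w''$ and $a_{j'}$ of $v''$; if $a_j\neq a_{j'}$ then $\phi(w)\sim_n\phi(v)$. Now $\phi$ sends the common prefix $a_{i_1}^{k_1}\cdots a_{i_t}^{k_t}$ to a word $\pi$ that is a product of at most two distinct-letter blocks, and $\phi(w)=\pi\,a_j\cdots$, $\phi(v)=\pi\,a_{j'}\cdots$ diverge immediately after $\pi$. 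A short inspection of the last block of $\pi$ (or of the empty case) shows that, relative to the other word, exactly one of $\phi(w),\phi(v)$ has a strictly longer first block of some letter, or a different content before some first occurrence; by the ``in particular'' part of (ii) applied in this two-letter word, this contradicts $\phi(w)\sim_n\phi(v)$. Hence $a_j=a_{j'}$, completing (iv). The crux of the whole lemma is exactly this last step, and the device that resolves it is the reduction to a two-letter alphabet, where all first-block data is completely governed by~(ii).
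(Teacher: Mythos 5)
Your proof is correct, and for parts (i)--(iii) it follows essentially the paper's own argument: the chain extremes and the first-block points are identified as vertices of the degree-one polytopes (where you compute the $\psi$-minimising face, the paper equivalently observes that these points are the only ones on the extremal ray $c(w')+\mathbb{R}_{\geq 0}\mathbf{e}_i$ of the cone $c(w')+\mathbb{R}_{\geq 0}^m$ containing $\gamma^{w,a_i}$), and (iii) is obtained by the same reversal trick via Remark~\ref{rev}. The genuine divergence is in the final step of (iv). The paper splits into two cases: if one of the two candidate first letters, say $a_j$, does not occur among $a_{i_1},\dots,a_{i_{t-1}}$, it argues that $c(w')$ is a vertex of $\conv(\gamma^{v,a_j})=\conv(\gamma^{w,a_j})$, hence a point of $\gamma^{w,a_j}$, and concludes from disjointness of the supports $\gamma^{w,a_1},\dots,\gamma^{w,a_m}$ that the letters agree; only in the remaining case (both letters among the prefix blocks) does it delete down to a two-letter alphabet and derive a first-block contradiction. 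You instead handle all cases uniformly by deleting to the alphabet consisting of the two candidate letters and reading off a first-block or prefix-content discrepancy via (ii); this is a cleaner single argument, at the cost of a slightly longer inspection of the last block of $\pi$, and your counterexample $w=ababa$ correctly pinpoints why the naive vertex argument fails in general, which is exactly what forces the paper's case split. One small point of hygiene: what you call preservation of $\sim_n$ under ``letter homomorphisms'' sending letters to the empty word requires that $\ut{n}$ has an identity element (so deleted letters can be mapped to it); this is the same fact the paper invokes, but it is worth stating explicitly since $\sim_n$ is a priori only a semigroup notion.
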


\begin{proof}
Noting that $w \sim_n v$ implies $w \sim_2 v$, it suffices to prove each implication in the case $n=2$.

(i) Since $\gamma^{w,a_i}$ is a finite increasing sequence of points in $\mathbb{N}_{\geq 0}^m$ with respect to the natural partial order $\trianglelefteq$, it is immediate that the minimal and maximal elements of this set must be vertices of the convex hull $\conv(\gamma^{w,a_i})$. Suppose then that $w \sim_2 v$. Corollary \ref{polytopes} then implies that the maximal element of $\cup_{j=1}^{m} \gamma^{w, a_j}$ must be equal to the maximal element of $\cup_{j=1}^{m} \gamma^{v, a_j}$, and that this common point lies in $\gamma^{w,a_i} \cap \gamma^{v,a_i}$ for some $i$. Since the maximal element of $\gamma^{w, a_i}$ is $c(w)-\mathbf{e}_i$, the result follows. 

(ii) Noting that $\gamma^{w, a_i}$ is contained in the affine cone $c(w') + \mathbb{R}_{\geq 0}^m$, we see that the segment of points $c(w'a_i^j)$ with $0 \leq j \leq k-1$ are the unique elements of $\gamma^{w, a_i}$ lying on the extremal ray $c(w') + \mathbb{R}_{\geq 0}\mathbf{e}_{i}$. Thus the two extremal points of this segment, namely $c(w')$ and $c(w'a^{k-1})$, must be vertices of $\conv(\gamma^{w, a_i})$. If $w \sim_2 v$, then these two points must also be vertices of  $\conv(\gamma^{v, a_i})$, from which we deduce that $v=v' a_i^kv''$ where $c(v')=c(w')$ and the first letter of $v''$ is not equal to $a_{i}$.

(iii) If $w''$ does not contain $a_i$, then the polytope $\conv(\gamma_i^{{\rm rev}(w)})$ has vertices $c({\rm rev}(w'')), c({\rm rev}(w'')) + (k-1)\mathbf{e}_i$ and applying the transformation from Remark \ref{rev} then yields that the polytope $\conv(\gamma_i^{w})$ has vertices: $c(w'), c(w') + (k-1)\mathbf{e}_i$.

(iv) By repeated application of part (ii) one may deduce that $v=w'v''$ where  $w'= a_{i_1}^{k_1}\cdots a_{i_t}^{k_t}$ and where the first letter of $v''$ is not equal to $a_{i_t}$. Now let  $a_i$ (respectively, $a_j$) denote the first letter of $w''$ (respectively, $v''$). It remains to show that $j=i$. Suppose first that $j \not \in \{i_1, \ldots, i_{t-1}\}$. Since $j \neq i_t$, reasoning as above yields that $c(w')$ must be a \emph{vertex} of $\conv(\gamma^{v, a_j}) = \conv(\gamma^{w,a_j})$, and so in particular $c(w') \in \gamma^{w,a_j}$. Since the sets $\gamma^{w,a_1}, \ldots, \gamma^{w,a_m}$ are disjoint and $c(w') \in \gamma^{w,a_i}$ we obtain $j=i$. Likewise, if $i \not \in \{i_1, \ldots, i_{t-1}\}$, we obtain $i=j$.

Now suppose that $i,j \in \{i_1,\ldots, i_{t-1}\}$ and suppose for contradiction that $j \neq i$. Without loss of generality we may assume that $i=i_r$ and $j=i_s$ where $1 \leq r<s<t$.  Since $\ut{n}$ contains an identity element, it is clear that if $w \sim_n v$, then $(w\setminus \Delta) \sim_n (v \setminus \Delta)$, where $(w\setminus \Delta)$ and $(v \setminus\Delta)$ are the words obtained from $w$ and $v$ by deleting all occurrences of the letters from any proper subset $\Delta$ of of $\Sigma$. Taking $\Delta = \Sigma \setminus \{i,j\}$ we find that $(w \setminus \Delta) = a_i^{k_r} a_j^{k_s} a_i^k (w'' \setminus \Delta)$ and $(v \setminus \Delta) = a_i^{k_r} a_j^{k_s} a_j^m (v'' \setminus \Delta)$. Applying the reasoning of the first  paragraph to these two words then yields a contradiction.
\end{proof}

Say that $a_i^k$ is a block of $w \in \Sigma^+$ if $w=w'a_i^kw''$ where $w'$ does not end with $a_i$ and $w''$ does not begin with $a_i$. Lemma \ref{props} then tells us that any word with very few blocks of each letter must be an isoterm.
\begin{corollary}
Let $w \in \Sigma^+$ be a word in which each letter $a_i \in \Sigma$ occurs in at most two distinct blocks. Then $w$ is an isoterm for $\ut{2}$.
\end{corollary}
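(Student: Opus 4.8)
The plan is to reduce the statement to an application of Lemma \ref{props}, parts (ii) and (iii), which already pin down the structure of any word equivalent to $w$. First I would recall the key point: if each letter $a_i$ occurs in at most two blocks of $w$, then every occurrence of $a_i$ lies in a block that is either a \emph{prefix block} (nothing of $a_i$ occurs before it) or a \emph{suffix block} (nothing of $a_i$ occurs after it). Lemma \ref{props}(ii) controls prefix blocks and Lemma \ref{props}(iii) controls suffix blocks; so for each letter the data of where its blocks sit is forced to be the same in any equivalent word $v$.

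The main step is to show that this block data, taken over all $m$ letters simultaneously, determines the word uniquely. Suppose $w \sim_2 v$. By Lemma \ref{props}(i) we have $c(w)=c(v)$, and by parts (ii) and (iii) applied to each letter $a_i$, the positions (measured as content vectors) at which the blocks of $a_i$ occur in $v$ coincide with those in $w$: a prefix block $a_i^k$ appearing after a prefix $w'$ forces $v$ to contain $a_i^k$ preceded by a prefix of content $c(w')$, and symmetrically for suffix blocks. The strategy is then to reconstruct $v$ letter-by-letter from left to right. At each position $j$, the content vector $c(w_1\cdots w_{j-1})$ of the prefix read so far is determined; I would argue that exactly one letter is admissible at this position, because that content vector lies on precisely one of the constrained block-segments identified above, and Lemma \ref{props} forces the same letter of $v$ to occupy the corresponding lattice point. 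Equivalently, in the language of Corollary \ref{polytopes}, the disjoint support sets $\gamma^{w,a_1},\ldots,\gamma^{w,a_m}$ partition the relevant lattice points of $\gamma^w$, and the at-most-two-blocks hypothesis guarantees each such point is a vertex of one of the convex hulls $\conv(\gamma^{w,a_i})$; since these hulls are preserved under $\sim_2$, the path $\gamma^v$ is forced to pass through the same sequence of lattice points as $\gamma^w$, whence $v=w$.

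The hard part will be making precise the claim that the at-most-two-blocks hypothesis guarantees every relevant lattice point of the path is a vertex of one of the signature polytopes, rather than an interior point of some edge. This is exactly where the hypothesis is used: a letter occurring in three or more blocks can contribute collinear support points whose middle elements are non-vertices, and these are precisely the points that $\sim_2$ fails to detect (as already illustrated by the isoterm/non-isoterm examples such as Example \ref{adjan}). With at most two blocks per letter, each block contributes a single extremal segment lying on an extremal ray of the relevant affine cone $c(w')+\mathbb{R}_{\geq 0}^m$, so its endpoints are genuine vertices and the reconstruction argument goes through. I expect the cleanest writeup to invoke Lemma \ref{props}(iv) to handle the leading maximal run of distinct letters and then induct on the suffix, peeling off one block at a time and applying the already-established uniqueness to the shorter word.
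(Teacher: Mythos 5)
Your overall route is the same as the paper's: invoke Lemma~\ref{props}(ii) and (iii) to pin down the first and last block of each letter, observe that under the hypothesis every block is a first or a last block, and conclude that any $v\sim_2 w$ is forced to equal $w$. However, the claim that you single out as ``the hard part'' --- that the at-most-two-blocks hypothesis guarantees \emph{every} lattice point of $\gamma^w$ (save the endpoint) is a vertex of one of the polytopes $\conv(\gamma^{w,a_i})$ --- is simply false. Take $w=aaabaaa$: the letter $a$ has two blocks and $b$ has one, so the hypothesis holds, but $\gamma^{w,a}=\{(0,0),(1,0),(2,0),(3,1),(4,1),(5,1)\}$ has convex hull a parallelogram with vertices $(0,0),(2,0),(3,1),(5,1)$, so the points $(1,0)$ and $(4,1)$ are interior points of edges, and they do not lie in $\gamma^{w,b}=\{(3,0)\}$ either. (Such a $w$ is of course still an isoterm.) Consequently the ``equivalent'' polytope-vertex formulation of your reconstruction argument does not go through: equality of the hulls forces the path of $v$ through the \emph{vertices} only, and your fallback observation that the two endpoints of each block segment are genuine vertices does not by itself force $\gamma^v$ to proceed by $\mathbf{e}_i$-steps through the non-vertex interior points of a long block.

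The repair is to use the full strength of the statements you already cited and then set aside. Lemma~\ref{props}(ii) does not merely locate two vertices; it asserts that any $v\sim_2 w$ factors as $v=v'a_i^k v''$ with $c(v')=c(w')$ and $v''$ not starting with $a_i$ --- that is, the \emph{entire} block, including its non-vertex points, reappears in $v$ anchored at the same content vector; part (iii) gives the symmetric statement for last blocks. Since by hypothesis every block of $w$ is a first or a last block of its letter, and $c(v)=c(w)$ by part (i), every letter of $v$ must lie in one of these anchored blocks; the anchors are totally ordered along the staircase path, so the anchored blocks concatenate in exactly one way, giving $v=w$. (If you insist on arguing at the level of support sets, the intermediate points can be forced as follows: a point of $\gamma^{v,a_i}$ whose $i$-th coordinate lies strictly between those of the two endpoint vertices is sandwiched componentwise between them, because points on a staircase path are totally ordered under $\trianglelefteq$; hence it lies on the segment joining them. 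But this is precisely re-deriving Lemma~\ref{props}(ii), so it is cleaner to quote it.)
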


\begin{proof}
 By Lemma \ref{props} (ii), the first block of each letter fixes a pair of points through which every path $\gamma^v$ with $v \sim_2 w$ must pass.  Likewise,  by Lemma \ref{props} (iii), the last block of each letter fixes a pair of points through which every path $\gamma^v$ with $v \sim_2 w$ must pass. If each letter occurs in at most two distinct blocks, then it follows that each block of $w$ is either the first block of that letter or the last block of that letter (possibly both), and it is straightforward to verify that the points described above uniquely determine the word.
\end{proof}

\begin{corollary}
Let $|\Sigma| = 2$. If the total number of blocks of $w \in \Sigma^+$ (counted with multiplicity) is no more than five, then $w$ is an isoterm for $\ut{2}$.
\end{corollary}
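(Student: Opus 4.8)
The plan is to reduce to a single exceptional block shape and then pin down every word equivalent to it. Since $|\Sigma|=2$, write $\Sigma=\{a,b\}$, so that the blocks of $w$ alternate between maximal runs of $a$'s and of $b$'s; a word with at most five blocks thus has at most three blocks of one letter. If every letter occurs in at most two blocks, then $w$ is already an isoterm by the preceding corollary. Hence the only case left to treat is when $w$ has exactly five blocks, with one letter occurring in three blocks and the other in two. Up to exchanging the roles of $a$ and $b$ (a relabelling preserving $\sim_2$), this means $w=a^{k_1}b^{k_2}a^{k_3}b^{k_4}a^{k_5}$ with $k_1,\ldots,k_5\geq 1$, and it suffices to show that such a $w$ is an isoterm.

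Fix $v$ with $v\sim_2 w$. First I would apply Lemma \ref{props}(ii) to the two leading blocks of $w$. Applied to $a$ and its first block $a^{k_1}$ (with $w'=\varepsilon$), it forces $v=a^{k_1}v''$ with $v''$ not beginning with $a$; applied to $b$ and its first block $b^{k_2}$ (with $w'=a^{k_1}$, of content $(k_1,0)$), it forces the first $b$-block of $v$ to be exactly $b^{k_2}$, preceded by precisely the prefix $a^{k_1}$ and followed by a non-$b$ letter. Combining these two conclusions shows that $v$ begins with $a^{k_1}b^{k_2}a$. Symmetrically, I would apply Lemma \ref{props}(iii) to the two trailing blocks $b^{k_4}$ and $a^{k_5}$ of $w$ to deduce that $v$ terminates in the block $a^{k_5}$, immediately preceded by the block $b^{k_4}$, which is itself preceded by an $a$; that is, $v$ ends with $b^{k_4}a^{k_5}$.

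To finish, I would run a content count. Because $k_3\geq 1$, the prefix $a^{k_1}b^{k_2}$ and the suffix $b^{k_4}a^{k_5}$ occupy disjoint positions of $v$, so $v=a^{k_1}b^{k_2}\,M\,b^{k_4}a^{k_5}$ for a middle factor $M$. By Lemma \ref{props}(i) we have $c(v)=c(w)=(k_1+k_3+k_5,\,k_2+k_4)$, whence $c(M)=c(v)-(k_1,k_2)-(k_5,k_4)=(k_3,0)$. Thus $M$ contains no $b$ and exactly $k_3$ copies of $a$, so $M=a^{k_3}$ and therefore $v=w$. As $v$ was arbitrary, $w$ is an isoterm.

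The main obstacle I anticipate is not the final count but the bookkeeping in the reduction: verifying that the unique shape escaping the preceding corollary is indeed the five-block word above (and that its $b$-first mirror image is covered by relabelling), and then reading correctly from the ``first/last letter is not $a_i$'' clauses of Lemma \ref{props}(ii)--(iii) that the extreme blocks of $v$ coincide with those of $w$ as \emph{whole} blocks, rather than merely sitting inside possibly larger blocks. Once both ends of $v$ are locked in place, the content argument closes the proof immediately.
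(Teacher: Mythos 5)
Your proposal is correct and takes essentially the same route as the paper's own proof: reduce to the single five-block shape $a^{k_1}b^{k_2}a^{k_3}b^{k_4}a^{k_5}$ via the preceding corollary, lock in the prefix $a^{k_1}b^{k_2}$ and suffix $b^{k_4}a^{k_5}$ of any $v\sim_2 w$ using parts (ii) and (iii) of Lemma~\ref{props}, and conclude $v=w$ from equality of contents. The only difference is one of exposition: you spell out the disjointness of prefix and suffix (via $k_3\geq 1$) and the exact bookkeeping of the block-boundary clauses, which the paper leaves implicit.
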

\begin{proof}
If $w$ is a word on two letters with fewer than five blocks, then it is clear that each block is either the first or last block of a letter, and the reasoning of the previous corollary applies. Suppose then that $w$ has five blocks, say $w=a^h b^i a^j b^k a^l$, and that $w \sim_2 v$.  Parts (ii) and (iii) of Lemma \ref{props} yield that $v=a^hb^i v''$ and $v=v'b^k a^l$ for some $v', v'' \in \Sigma^*$. Since $v$ and $w$ are required to have the same content we see that $v=w$.
\end{proof}
More generally, one can use similar  reasoning to show that any word over an $m$-letter alphabet containing no more than $m+3$ blocks must be an isoterm. For the sake of brevity we omit the proof.

Say that an identity system $E\subset W(c) \times W(c)$ is left $1$-hereditary if for each pair $(v,w) \in E$,  whenever $v=v'zv''$ and $w=w'zw''$ where $v'$ and $w'$ do not contain any occurrences of $z \in \Sigma$, then $(v',w') \in E$. Dually, say that $E$ is right $1$-hereditary if for each pair $(v,w) \in E$, we have that whenever $v=v'zv''$ and $w=w'zw''$ where $v''$ and $w''$ do not contain any occurrences of $z \in \Sigma$, then $(v'',w'') \in E$. The equational theory of the bicyclic monoid is known to be both left and right $1$-hereditary \cite{Shn89,P06}. Since $\ut{2}$ identities are exactly the identities of the bicyclic monoid \cite[Theorem 4.1]{DJK17}, we obtain a geometric proof of this statement by consideration of the degree $1$ signatures of the words. The following easy lemma (depicted in Figure \ref{fig:outofbox}) captures the essence of this property in terms of the polytopes involved.

\begin{lemma}
\label{outofbox}
Let $c\in \mathbb{R}_{\geq 0}^m$, with $c_j \neq 0$ for some $j$, and let $B_{\hat{c}} = \{ p\in \mathbb{R}_{\geq 0}^m : p_j=0, p_i \leq c_j \mbox{ for all }i \neq j\}$.  If $P$ and $Q$ are finite sets with $P \subseteq B_{\hat{c}}$ and $Q \subseteq c +\mathbb{R}_{\geq 0}^m$, then every vertex of $\conv(P)$ is a vertex of $\conv(P \cup Q)$.
\end{lemma}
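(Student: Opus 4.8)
The plan is to prove this separation property by exhibiting, for each vertex $p$ of $\conv(P)$, a linear functional that is maximised on $\conv(P\cup Q)$ uniquely at $p$. Since $p$ is a vertex of $\conv(P)$, there is already some linear functional $\ell$ with $\ell(p) > \ell(q)$ for all other $q \in P$. The difficulty is that $\ell$ need not separate $p$ from the points of $Q$, which live in the shifted orthant $c + \mathbb{R}_{\geq 0}^m$. The key geometric feature I would exploit is the ``out of box'' hypothesis: the set $B_{\hat{c}}$ lives in the coordinate hyperplane $p_j = 0$ and is bounded by $c_j$ in every other coordinate, while every point of $Q$ has $j$-th coordinate at least $c_j > 0$. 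Thus the $j$-th coordinate functional $e_j^*$ strictly separates the two blocks: $e_j^*$ vanishes on all of $P$ but is at least $c_j$ on all of $Q$.

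First I would fix a vertex $p$ of $\conv(P)$ and choose a linear functional $\ell_0$ witnessing this, normalised so that $\ell_0(p) > \ell_0(p')$ for every other vertex $p'$ of $\conv(P)$; by compactness there is a gap $\delta > 0$ with $\ell_0(p) \geq \ell_0(x) + \delta$ for all $x \in \conv(P) \setminus \{p\}$ outside a small neighbourhood, or more simply $\ell_0(p) > \ell_0(x)$ for all $x \in P$, $x \neq p$. Next I would modify $\ell_0$ by subtracting a large multiple of the separating coordinate functional: set $\ell = \ell_0 - \lambda\, e_j^*$ for a parameter $\lambda > 0$ to be chosen. Since $e_j^*(p) = 0$ (as $p \in B_{\hat{c}}$ forces $p_j = 0$), this modification leaves the value at $p$ and at every other point of $P$ unchanged, so $\ell$ still attains its maximum over $P$ uniquely at $p$. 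On the other hand, for $q \in Q$ we have $e_j^*(q) \geq c_j$, so $\ell(q) = \ell_0(q) - \lambda e_j^*(q) \leq \ell_0(q) - \lambda c_j$. The values $\ell_0(q)$ for $q \in Q$ and $\ell_0(p)$ are all fixed finite numbers, so choosing $\lambda$ large enough (namely $\lambda > (\max_{q \in Q}\ell_0(q) - \ell_0(p))/c_j$) guarantees $\ell(q) < \ell(p)$ for all $q \in Q$ simultaneously.

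Having made this choice, I would conclude that $\ell(p) > \ell(x)$ for every $x \in (P \cup Q) \setminus \{p\}$: the inequality holds over $P \setminus \{p\}$ because the modification did not change any value there, and over $Q$ by the choice of $\lambda$. Since a linear functional attains its maximum over a polytope at a vertex, and here that maximum is attained uniquely at $p$ among the finitely many points generating $\conv(P \cup Q)$, the point $p$ must be a vertex of $\conv(P \cup Q)$. I expect the main (though still mild) obstacle to be purely bookkeeping: ensuring the strict inequalities survive the passage from ``$p$ beats the other generators of $P$'' to ``$p$ beats all generators of $P \cup Q$'', which is exactly what the two-step choice of $\ell_0$ and then $\lambda$ is designed to handle, using crucially that $e_j^*$ vanishes on $P$ and is bounded below by $c_j$ on $Q$.
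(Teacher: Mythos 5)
Your proof is correct, but it takes a genuinely different route from the paper's. The paper argues by contradiction via the convex-combination characterisation of vertices: it first notes that $\conv(P \cup Q) \cap B_{\hat{c}} = \conv(P)$ (any convex combination putting positive weight on a point of $Q$ has strictly positive $j$-th coordinate, hence leaves the box), and then observes that if a vertex $p$ of $\conv(P)$ lay strictly between two points $q,r \in \conv(P \cup Q)$, then either one of $q,r$ lies outside the box, contradicting $p_j = 0$ together with nonnegativity of $j$-th coordinates, or both lie in the box and hence in $\conv(P)$, contradicting that $p$ is a vertex of $\conv(P)$. You instead certify vertexhood directly by exhibiting a supporting functional: starting from $\ell_0$ uniquely maximised at $p$ over $P$ and passing to $\ell = \ell_0 - \lambda e_j^*$ with $\lambda$ large, which leaves all values on $P$ unchanged (since $e_j^*$ vanishes on $P$) while pushing all values on $Q$ below $\ell_0(p)$ (since $e_j^* \geq c_j > 0$ on $Q$). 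Both arguments hinge on exactly the same geometric fact, namely that the $j$-th coordinate functional separates $P$ from $Q$, but yours is constructive and makes visible that the lemma needs less than is stated: you never use the bound $p_i \leq c_j$ on the remaining coordinates of points in $B_{\hat{c}}$, so your argument proves the stronger statement in which $P$ is only assumed to lie in the hyperplane $\{x : x_j = 0\}$ and $Q$ in the half-space $\{x : x_j \geq c_j\}$. What the paper's version buys in exchange is brevity and the face identity $\conv(P \cup Q) \cap B_{\hat{c}} = \conv(P)$, which is precisely the picture in Figure \ref{fig:outofbox} and is the form in which the lemma is invoked in the proof of the $1$-hereditary property.
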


\begin{proof}
It is easily verified that $\conv(P \cup Q ) \cap B_{\hat{c}} = \conv(P)$. Suppose $p$ is a vertex of $\conv(P)$, but not of $\conv(P \cup Q)$. Then $p$ lies on a line segment strictly between $q$ and $r$ for some $q,r \in \conv(P \cup Q)$. Taking one of $q,r$ outside the box $B_{\hat{c}}$ contradicts that $p$ lies in this box, whilst taking both $q,r$ in $B_{\hat{c}}$, and hence in $\conv(P)$, contradicts that $p$ is a vertex of $P$.   
\end{proof}
\begin{figure}
\captionsetup{width=1\linewidth}
\resizebox{0.5\textwidth}{!}{
\begin{tikzpicture}
   \foreach \a in {0,...,5}{
     \draw[help lines]
     (0,\a,0) -- (0,\a,-5)
     (0,0,-\a) -- (0,5,-\a);
   }

\draw[help lines]  (3,0,0) -- (0,0,0);
\draw[help lines]  (0,5,0) -- (0,0,0);
\draw[help lines]  (0,0,-5) -- (0,0,0);

\path (0,0,0) node{\textbullet};
\path (0,0,-4) node{\textbullet};
\path (0,1,-2) node{\textbullet};
\path (0,3,0) node{\textbullet};
\path (0,4,-3) node{\textbullet};
\path (0,3,-4) node{\textbullet};
\path (0,2,-3) node{\textbullet};
\path (0,3,-2) node{\textbullet};
\path (3,5,-5) node{\textbullet};
\path (2.7,5.3,-5) node{\huge$c$};
\path (-0.3,0.2,0) node{\huge$0$};
\path (7,6.8,-7) node{\huge$\conv(Q)$};
\path (0,2,-2) node{\huge$\conv(P)$};
\path (-0.5,4,0) node{\huge$B_{\hat{c}}$};
\draw[dashed]  (0,0,-4) -- (0,1,-2);
\draw[dashed]  (0,0,-4) -- (0,3,-4);

\draw[dashed]  (0,1,-2) -- (0,3,0);

\draw[dashed]  (0,3,0) -- (0,4,-3);

\draw[dashed]  (0,4,-3) -- (0,3,-4);

\draw[help lines]  (3,5,-5) -- (3,5,-10);
\draw[help lines]  (3,5,-5) -- (3,10,-5);
\draw[dashed]  (0,5,-5) -- (3,5,-5);
\draw[help lines]  (3,5,-5) -- (8,5,-5);
\draw[dashed]  (3,5,-5) -- (3,0,-5);
\draw[dashed]  (3,0,-5) -- (0,0,-5);
\draw[dashed]  (3,0,0) -- (3,0,-5);
\draw[dashed]  (3,5,-5) -- (3,5,0);
\draw[dashed]  (3,5,0) -- (3,0,0);
\draw[dashed]  (3,5,0) -- (0,5,0);

\path (7,8,-5) node{\textbullet};
\path (5,7,-8) node{\textbullet};
\path (7,8,-9) node{\textbullet};
\path (6,7,-6) node{\textbullet};
\path (6,9,-6) node{\textbullet};
\path (7,7,-10) node{\textbullet};
\draw[dashed]  (7,8,-5) -- (5,7,-8);
\draw[dashed]  (7,8,-5) -- (7,8,-9);
\draw[dashed]  (7,8,-5) -- (6,7,-6);
\draw[dashed]  (7,8,-5) -- (6,9,-6);
\draw[dashed]  (7,8,-5) -- (7,7,-10);
\draw[dashed]  (5,7,-8) -- (7,8,-9);
\draw[dashed]  (5,7,-8) -- (6,7,-6);
\draw[dashed]  (5,7,-8) -- (6,9,-6);
\draw[dashed]  (5,7,-8) -- (7,7,-10);
\draw[dashed]  (7,8,-9) -- (6,7,-6);
\draw[dashed]  (7,8,-9) -- (6,9,-6);
\draw[dashed]  (7,8,-9) -- (7,7,-10);
\draw[dashed]  (6,7,-6) -- (6,9,-6);
\draw[dashed]  (6,7,-6) -- (7,7,-10);
\draw[dashed]  (6,9,-6) -- (7,7,-10);
\end{tikzpicture}
}

\caption{Geometric interpretation of the left $1$-hereditary property}
\label{fig:outofbox}
\end{figure}
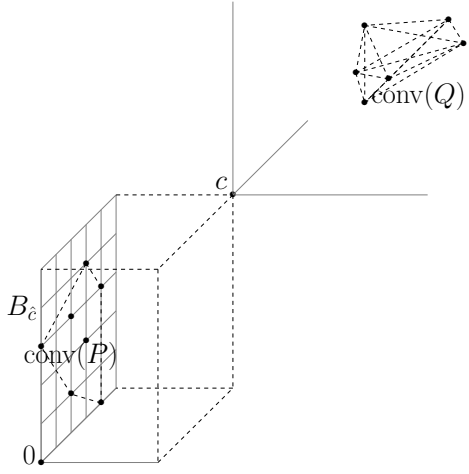

\begin{proposition}\cite[Lemma 2]{Shn89} (c.f. \cite[4.16]{P06})
The equational theory of the bicyclic monoid is both left and right $1$-hereditary.
\end{proposition}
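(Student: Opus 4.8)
The plan is to derive both hereditary properties as immediate consequences of Corollary~\ref{polytopes} together with Lemma~\ref{outofbox}, working entirely with the degree $1$ signatures of the words. By the reversal symmetry (the Lemma preceding Remark~\ref{rev}), it suffices to establish the left $1$-hereditary property; the right-hereditary case then follows by applying the left case to ${\rm rev}(v)$ and ${\rm rev}(w)$, noting that deleting a prefix before the first occurrence of $z$ corresponds to deleting a suffix after the last occurrence of $z$ in the reversed words. So I would fix a pair $(v,w) \in E$ with $v \sim_2 w$, write $v = v'zv''$ and $w = w'zw''$ where $v'$ and $w'$ contain no occurrence of $z$, and aim to show $v' \sim_2 w'$.

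The key geometric observation is that the decomposition $v = v'zv''$ splits the path $\gamma^v$ at the first occurrence of $z$, and correspondingly splits each height set $\gamma^{v,a_i}$. Set $c := c(v')$, which is the lattice point on $\gamma^v$ recording the content of the prefix $v'$; since $v'$ contains no $z$, we have $c_j = 0$ where $z = a_j$, and $c_j' \neq 0$ for some coordinate provided $v' \neq \varepsilon$ (the empty prefix case is trivial, giving $v' = w' = \varepsilon$). For each letter $a_i$, I would decompose $\gamma^{v,a_i} = P_i \cup Q_i$, where $P_i$ consists of those height points arising from occurrences of $a_i$ within the prefix $v'$ and $Q_i$ from occurrences at or after the $z$. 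By construction $P_i = \gamma^{v',a_i}$ and, since every letter of $v''$ (and the $z$ itself) strictly increases the $j$th coordinate or lies weakly above $c$, one checks that $P_i \subseteq B_{\hat{c}}$ and $Q_i \subseteq c + \mathbb{R}_{\geq 0}^m$ in the notation of Lemma~\ref{outofbox}. Applying that lemma gives that every vertex of $\conv(\gamma^{v',a_i}) = \conv(P_i)$ is a vertex of $\conv(\gamma^{v,a_i})$, and moreover the vertices of $\conv(P_i)$ are exactly those vertices of $\conv(\gamma^{v,a_i})$ lying in the box $B_{\hat{c}}$, since $\conv(P_i \cup Q_i) \cap B_{\hat{c}} = \conv(P_i)$.

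The crux is then to argue that $\conv(\gamma^{w,a_i})$ admits the \emph{same} box $B_{\hat{c}}$, so that the prefix polytopes can be recovered identically on both sides. Here I would use that $v \sim_2 w$ forces $\conv(\gamma^{v,a_i}) = \conv(\gamma^{w,a_i})$ for every $i$ by Corollary~\ref{polytopes}; the point $c = c(v')$ is determined by the signature of $v$ (for instance as a distinguished vertex, via Lemma~\ref{props}(ii) applied to the first block of $z$), hence $c = c(w')$ and $B_{\hat{c}}$ is common to both. Consequently $\conv(\gamma^{v',a_i}) = \conv(\gamma^{v,a_i}) \cap B_{\hat{c}} = \conv(\gamma^{w,a_i}) \cap B_{\hat{c}} = \conv(\gamma^{w',a_i})$ for every $i$, and Corollary~\ref{polytopes} yields $v' \sim_2 w'$, as desired. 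The main obstacle I anticipate is the bookkeeping needed to verify cleanly that $c(v') = c(w')$ and that the intersection with $B_{\hat{c}}$ genuinely isolates the prefix contribution on both words simultaneously; once the box is pinned down identically for $v$ and $w$, Lemma~\ref{outofbox} does the real work and the rest is formal.
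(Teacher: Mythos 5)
Your argument is in substance the paper's own proof: the same decomposition of each height set into prefix points (lying in a box) and remaining points (lying in a cone), the same identification $c(v')=c(w')$ from the equality of the $a_j$-polytopes, and the same conclusion via Lemma~\ref{outofbox} and Corollary~\ref{polytopes}; your reversal reduction for the right-hereditary half is also the intended argument (the paper leaves it implicit, via the unlabelled reversal lemma preceding Remark~\ref{rev}).

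One step needs repair, though it is a one-line fix. You apply Lemma~\ref{outofbox} with apex $c=c(v')$, but then $c_j=0$ for precisely the coordinate $j$ that defines the box $B_{\hat{c}}$, and that hypothesis is not cosmetic: with $c_j=0$ the cone $c+\mathbb{R}_{\geq 0}^m$ meets the hyperplane $\{p\,:\,p_j=0\}$ and the conclusion of the lemma can fail (take $m=2$, $j=2$, $c=(1,0)$, $P=\{(0,0),(1,0)\}$, $Q=\{(2,0)\}$; then $(1,0)$ is a vertex of $\conv(P)$ but not of $\conv(P\cup Q)$). Your reading that it suffices to have $c_{j'}\neq 0$ for \emph{some} coordinate $j'$ is a misreading of the lemma: the nonzero coordinate must be the one cut out by the box. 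The paper avoids this by taking $c=c(v'a_j)=c(v')+\mathbf{e}_j$, which leaves the box unchanged (its constraints involve only the coordinates $i\neq j$) but makes $c_j=1\neq 0$; and your suffix points $Q_i$ do lie in this smaller cone --- as you yourself observe, every height point arising at or after the first occurrence of $z$ has $j$-th coordinate at least $1$. With that substitution, the rest of your argument (the identification $P_i=\gamma^{v',a_i}$, the equality $c(v')=c(w')$ via Lemma~\ref{props}(ii), and the intersection identity $\conv(P_i\cup Q_i)\cap B_{\hat{c}}=\conv(P_i)$) goes through exactly as in the paper.
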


\begin{proof}
Suppose that $v \sim_2 w$, where $v=v'a_jv''$ and $w=w'a_jw''$ and where $v'$ and $w'$ do not contain any occurrences of $a_j \in \Sigma$. Then we have $\conv(\gamma^{w,a_i}) = \conv(\gamma^{w, a_i})$ for all $i$. The fact that the $j$-polytopes agree tells us that $c(w'a_j) = c(v'a_j)$; let $c$ denote this common content. For each $i$ such that $c_i \neq 0$, the points of $\gamma^{w,a_i}$ (respectively, $\gamma^{v, a_i}$) each lie in either the box $B_{\hat{c}}$ or the cone $c + \mathbb{R}_{\geq 0}^m$. The points lying in the box are precisely the points of $\gamma^{w', a_i}$ (respectively, $\gamma^{v', a_i}$). Applying Lemma \ref{outofbox}, first with $P=\gamma^{w', a_i}$ and the set difference $Q=\gamma^{w,a_i}\setminus \gamma^{w', a_i}$, and second with $P=\gamma^{v', a_i}$ and the set difference $Q=\gamma_i^{v, a_i}\setminus \gamma^{v',a_i}$, then yields that $\conv(\gamma^{w', a_i}) = \conv(\gamma^{v',a_i})$. Thus  $v' \sim_2 w'$.
\end{proof}

\section{Identities for the bicyclic monoid on two letter alphabets}
\label{sec:main}
This section collects our main results on $\ut{2}$ identities over a two-letter alphabet. In particular, we prove Theorem \ref{thm:chain}, providing an algorithm to compute the minimal and maximal elements of each equivalence class guaranteed by this theorem (cf. Theorem \ref{thm:min.max}), and give an alternative proof of the minimality of Adjan's identity for the bicyclic monoid. 

Let $\Sigma = \{a,b\}$. The degree-1 signature of a word $w \in W(\ell_a,\ell_b)$ is the pair of polygons $(\Newt(g_a^w), \Newt(g_b^w))$, which we denote as $A(w)$ and $B(w)$, respectively. To further simplify notation, we write $\alpha^w$ for $\gamma^{w,a}$ and $\beta^w$ for $\gamma^{w,b}$ defined in (\ref{eqn:gamma.wa}) and denote the unique element of $\alpha^w$ with first coordinate $0 \leq i \leq \ell_a -1$ by  $(i, \alpha_i^w)$, and the unique element of $\beta^w$ with second coordinate $0 \leq i \leq \ell_b -1$ by $(\beta_i^w,i)$.  The dual of $w$ is the word $\widetilde{w} \in W(\ell_b,\ell_a)$ obtained by swapping each occurrence of the letter $a$ in $w$ by the letter $b$, and vice versa.  Note that $(i,\alpha^{w}_i) = (i,\beta^{\widetilde{w}}_i)$ for all $0 \leq i \leq \ell_a-1$. 
 
There is a one-to-one correspondence between a word and its path; for words on two letters, fixing the length and the $a$-height $\alpha^{w}$ also uniquely determines $w$.
\begin{lemma}\label{lem:w.from.a}
Let $\ell_a, \ell_b \in \mathbb{N}$ and $\alpha: \{0,1\dots,\ell_a-1\} \to \mathbb{N}_{\geq 0}$  a non-decreasing map $i \mapsto \alpha_i$. If $\ell_b \geq \alpha_{\ell_a-1}$, then
\begin{equation}\label{eqn:w.from.a}
w = b^{\alpha_0}\,a\,b^{(\alpha_1-\alpha_0)}\,a\,b^{(\alpha_2-\alpha_1)}\,a\, \cdots b^{(\alpha_{\ell_a-1}-\alpha_{\ell_a-2})}\,a\,b^{(\ell_b-\alpha_{\ell_a-1})}
\end{equation}
is the unique word in $W(\ell_a,\ell_b)$ such that $\alpha^{w} = \{(i, \alpha_i): 0 \leq i \leq \ell_a -1\}$.
\end{lemma}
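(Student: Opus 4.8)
The plan is to verify directly that the word $w$ displayed in \eqref{eqn:w.from.a} lies in $W(\ell_a,\ell_b)$ and has $a$-height exactly $\{(i,\alpha_i)\}$, and then to deduce uniqueness from the one-to-one correspondence between two-letter words and their paths noted just above the lemma.

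First I would check that $w$ is a well-defined element of $W(\ell_a,\ell_b)$. Since $\alpha$ is non-decreasing we have $\alpha_k-\alpha_{k-1}\geq 0$ for $1\leq k\leq \ell_a-1$, while $\alpha_0\geq 0$ and, by the hypothesis $\ell_b\geq \alpha_{\ell_a-1}$, the final exponent $\ell_b-\alpha_{\ell_a-1}$ is also non-negative; hence every block $b^{(\cdots)}$ appearing in \eqref{eqn:w.from.a} is a genuine (possibly empty) word, so $w$ is well-defined. Counting the letters gives $|w|_a=\ell_a$, and the exponents of $b$ telescope, $\alpha_0+\sum_{k=1}^{\ell_a-1}(\alpha_k-\alpha_{k-1})+(\ell_b-\alpha_{\ell_a-1})=\ell_b$, so $|w|_b=\ell_b$ and $w\in W(\ell_a,\ell_b)$.

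Next I would compute $\alpha^w$. By Lemma~\ref{lem:support}, $\alpha^w=\gamma^{w,a}$ records, for each $0\leq i\leq \ell_a-1$, the content of the prefix of $w$ ending just before the $(i+1)$-th occurrence of $a$; its first coordinate is $i$ and its second coordinate is the number of $b$'s preceding that $(i+1)$-th $a$. From the form of \eqref{eqn:w.from.a} this count is $\alpha_0+(\alpha_1-\alpha_0)+\cdots+(\alpha_i-\alpha_{i-1})=\alpha_i$, again by telescoping. Since the points of $\alpha^w$ have distinct first coordinates, this yields $\alpha^w=\{(i,\alpha_i):0\leq i\leq \ell_a-1\}$, as required.

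Finally, for uniqueness I would invoke the bijection between a two-letter word and its path. If $w'\in W(\ell_a,\ell_b)$ satisfies $\alpha^{w'}=\{(i,\alpha_i)\}$, then the number of $b$'s preceding the $(i+1)$-th $a$ of $w'$ equals $\alpha_i$ for every $i$; consequently $w'$ has exactly $\alpha_0$ occurrences of $b$ before its first $a$, exactly $\alpha_k-\alpha_{k-1}$ between its $k$-th and $(k+1)$-th $a$, and exactly $\ell_b-\alpha_{\ell_a-1}$ after its last $a$, which reconstructs $w'$ letter by letter and forces $w'=w$. The only genuine subtlety lies in this last step, namely the observation that the $a$-height together with the total $\ell_b$ pins down the position of every $a$ relative to the $b$'s, hence determines the whole staircase path; the remaining steps are routine bookkeeping with telescoping sums, so I anticipate no serious obstacle.
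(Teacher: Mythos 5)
Your proof is correct and is exactly the routine verification the paper has in mind: the paper states Lemma~\ref{lem:w.from.a} without proof, treating it as immediate from the word--path correspondence noted just before it, and your telescoping computation of the $a$-height together with the letter-by-letter reconstruction for uniqueness is precisely that omitted argument. There are no gaps.
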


Equation (\ref{eqn:w.from.a}) elucidates the interpretation of $\alpha^{w}_i$ for each $i$.  Applying the same argument to the dual word $\widetilde{w}$, one sees that the word length $\ell$ together with the $b$-height $\beta^w$ also uniquely determines $w$. Lemma \ref{lem:w.from.a} and Corollary \ref{polytopes} therefore converts the study of $\ut{2}$ identities over a two letter alphabet to the study of non-decreasing functions $\alpha: \{0,1\dots,\ell_a-1\} \to \{0,1,\dots,\ell_b\}$ for each fixed pair of non-negative integers $(\ell_a,\ell_b)$.  

Fix $\ell_a,\ell_b \in \mathbb{N}$. For staircase walks $\gamma, \gamma'$ that both start from $(0,0)$ and end at $(\ell_a,\ell_b)$, define $\gamma \preceq \gamma'$ if the walk $\gamma'$ does not go above the walk $\gamma$, and $\gamma \prec \gamma'$ if additionally the two paths are not equal. Define $w \preceq w'$ if $\gamma^w \preceq \gamma^{w'}$. This turns $W(\ell_a,\ell_b)$ into a distributive lattice. The following lemma follows straightforwardly from the definitions.

\begin{lemma}
For $w,v \in W(\ell_a,\ell_b)$, the following are equivalent:
\begin{itemize}
\item $\gamma^w \preceq \gamma^v$.
\item $(k, \alpha^w_k) \trianglelefteq (k,\alpha^v_k)$ for all $k \in \{0, \ldots, \ell_a-1\}$.
\item $(\beta^w_k, k) \trianglerighteq (\beta^v_k, k)$ for all $k \in \{0, \ldots, \ell_b -1\}$.
\end{itemize}
In particular, the $a$-height and $b$-height of $w \wedge v$ and $w \vee v$ are given by
\begin{eqnarray*}
\alpha^{w \wedge v}_i &=& \min(\alpha^{v}_i,\alpha^{w}_i), \quad \beta^{w \wedge v}_i = \max(\beta^{v}_i,\beta^{w}_i), \\
\alpha^{w \vee v}_i &=& \max(\alpha^{v}_i,\alpha^{w}_i), \quad \beta^{w \vee v}_i = \min(\beta^{v}_i,\beta^{w}_i).
\end{eqnarray*}
\end{lemma}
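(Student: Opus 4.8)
The plan is to reduce everything to a pointwise comparison of the height functions $\alpha$ and $\beta$, using the bijection between words and paths (Lemma~\ref{lem:w.from.a}) and its dual, and then to read off the lattice operations from the corresponding pointwise operations on non-decreasing functions.

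For the three-way equivalence I would begin by unwinding the definition of $\preceq$. For $0 \leq k \leq \ell_a-1$ the point $(k,\alpha^w_k)$ is the $\trianglelefteq$-maximal point of $\gamma^w$ with first coordinate $k$, so $\alpha^w_k$ is the height at which $\gamma^w$ sits above the $k$-th column, namely the number of $b$'s preceding the $(k+1)$-st $a$ in $w$. Since $\gamma^w \preceq \gamma^v$ holds exactly when $\gamma^w$ stays weakly below $\gamma^v$, and a staircase path is determined column-by-column by these heights, this is equivalent to $\alpha^w_k \leq \alpha^v_k$ for every $k$; as the two points share their first coordinate this is precisely $(k,\alpha^w_k) \trianglelefteq (k,\alpha^v_k)$ for all $k$, giving the first equivalence. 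Dually, $(\beta^w_k,k)$ is the $\trianglelefteq$-maximal point of $\gamma^w$ with second coordinate $k$, so $\beta^w_k$ is the horizontal position at which $\gamma^w$ crosses the $k$-th row; a lower path reaches a given height only after travelling further to the right, so $\gamma^w \preceq \gamma^v$ translates into the reversed inequalities $\beta^w_k \geq \beta^v_k$, i.e. $(\beta^w_k,k) \trianglerighteq (\beta^v_k,k)$ for all $k$, which is the third condition. I would make the passage between the $\alpha$- and $\beta$-descriptions precise by noting that both encode the same set of grid cells lying below $\gamma^w$ (column heights versus row widths), so that containment of these regions is simultaneously the $\alpha$- and the reversed $\beta$-comparison; alternatively one can invoke the duality $(i,\alpha^w_i)=(i,\beta^{\widetilde w}_i)$ already recorded for the dual word.

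For the meet and join formulas I would transport the lattice structure along the two encodings. By the first equivalence together with Lemma~\ref{lem:w.from.a}, the map $w \mapsto \alpha^w$ is an order isomorphism from $(W(\ell_a,\ell_b),\preceq)$ onto the set of non-decreasing maps $\{0,\dots,\ell_a-1\}\to\{0,\dots,\ell_b\}$ under pointwise $\leq$; the latter is a distributive lattice with pointwise $\min$ and $\max$, and a pointwise $\min$ or $\max$ of two such maps is again non-decreasing with values in $\{0,\dots,\ell_b\}$, hence corresponds to a genuine word. This yields $\alpha^{w\wedge v}_i=\min(\alpha^w_i,\alpha^v_i)$ and $\alpha^{w\vee v}_i=\max(\alpha^w_i,\alpha^v_i)$ at once. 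Applying the dual of Lemma~\ref{lem:w.from.a}, the map $w\mapsto \beta^w$ is a bijection onto non-decreasing maps which, by the third condition, is order-\emph{reversing}; under an order-reversing isomorphism meets become joins, so $\beta^{w\wedge v}_i=\max(\beta^w_i,\beta^v_i)$ and $\beta^{w\vee v}_i=\min(\beta^w_i,\beta^v_i)$, as claimed.

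I expect the only genuinely delicate point to be the bookkeeping behind the sign reversal between the $\alpha$- and $\beta$-comparisons, i.e. verifying that ``$\gamma^w$ lies weakly below $\gamma^v$'' simultaneously forces $\alpha^w\leq\alpha^v$ and $\beta^w\geq\beta^v$. Everything else is a routine transfer of the pointwise $\min/\max$ structure on monotone functions through the (order-preserving, respectively order-reversing) encodings, with validity of these operations on words guaranteed by Lemma~\ref{lem:w.from.a}.
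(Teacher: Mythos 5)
Your proposal is correct, and since the paper states this lemma without proof (introducing it as following ``straightforwardly from the definitions''), your definitional unwinding is exactly the intended argument: reading $\alpha^w_k$ as the column height (number of $b$'s before the $(k+1)$-st $a$) and $\beta^w_k$ as the row-crossing position, obtaining the order reversal for $\beta$ from containment of the regions under the two paths, and transporting pointwise $\min/\max$ through the order-preserving (resp.\ order-reversing) bijections supplied by Lemma~\ref{lem:w.from.a} and its dual. You also resolved the orientation of $\preceq$ correctly: the defining sentence in Section~\ref{sec:main} is worded with the opposite orientation to the Introduction, to the lemma itself, and to the proof of Theorem~\ref{thm:chain}, and your reading ($w \preceq v$ iff $\gamma^w$ stays weakly below $\gamma^v$, equivalently $\alpha^w_k \leq \alpha^v_k$ for all $k$) is the one consistent with the rest of the paper.
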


\begin{proof}[\textbf{Proof of Theorem \ref{thm:chain}}]
Let $w,u, v \in W(\ell_a, \ell_b)$ and suppose that $w\sim_2 v$. Thus we may write $A:=A(w)=A(v)$ and $B:=B(w)=B(v)$. 

\noindent(i) Suppose $w \preceq u \preceq v$. By definition, we have  
\begin{eqnarray*}
A&=&\conv{\{(i, \alpha_i^w): 0 \leq i \leq \ell_a - 1\}}=\conv{\{(i, \alpha_i^v): 0 \leq i \leq \ell_a - 1\}}, \mbox{ and }\\
A(u)&=&\conv{\{(i, \alpha_i^u): 0 \leq i \leq \ell_a-1\}}. 
\end{eqnarray*}
Since $w \preceq u \preceq v$, we have $\alpha_i^w \leq \alpha_i^u \leq \alpha_i^v$ for all $i$. Thus  each point $(i, \alpha_i^u)$ lies on a (vertical) line segment contained within $A$, giving $A(u) \subseteq A$. If $p$ is a vertex of $A$, then clearly $p,p+(1,0) \in \gamma^w \cap \gamma^v$, from which it follows that $p, p+(1,0) \in \gamma^u$, giving $A \subseteq A(u)$. Thus $A(u) = A$. Likewise, 
\begin{eqnarray*}
B&=&\conv{\{(\beta_i^w,i): 0 \leq i \leq \ell_b - 1\}}=\conv{\{(\beta_i^v, i): 0 \leq i \leq \ell_b -1\}}, \mbox{ and }\\
B(u)&=&\conv{\{(\beta_i^u, i): 0 \leq i \leq \ell_b-1\}}.
\end{eqnarray*}
Since $w \preceq u \preceq v$, we have $\beta_i^w \geq \beta_i^u \geq \beta_i^v$ for all $i$. Thus each point $(\beta_i^u,i)$ lies on a (horizontal) line segment contained within $B$, and hence $B(u) \subseteq B$. If $p$ is a vertex of $B$, then $p,p+(0,1) \in \gamma^w \cap \gamma^v$, and hence $p, p+(0,1) \in \gamma^u$, giving $B \subseteq B(u)$. Thus $B = B(u)$, and by Corollary \ref{polytopes}, $u \sim_2 v$.

\noindent(ii) By definition $\{(i, \alpha^{w \wedge v}_i), (i, \alpha^{w \vee v}_i)\} = \{ (i, \alpha^w_i), (i, \alpha^v_i)\}$, and so one immediately has that $A(w \wedge v), A(w \vee v) \subseteq A$. Conversely, if $(i,j)$ is a vertex of $A$, then this point must belong to both $\alpha^w$ and $\alpha^v$, giving $\alpha^{w}_i = \alpha^{v}_i = j$, and hence $\alpha^{w\wedge v}_i = \alpha^{w\vee v}_i = j$, from which it follows that  $A \subseteq A(w \wedge v), A(w \vee v)$. The same reasoning shows that $B=B(w \wedge v)=B(w\vee v)$, whence the conclusion follows from Corollary \ref{polytopes}.
\end{proof}

\begin{corollary}\label{lattice_class}
Each $\sim_2$-equivalence class of $\{a,b\}^+$ is an interval of the lattice $W(\ell_a,\ell_b)$ with respect to the order $\preceq$ for some $\ell_a,\ell_b \in \mathbb{N}$. If $w$ is the unique minimal element in its equivalence class, then $\widetilde{w}$ is the unique maximal element in its class. 
\end{corollary}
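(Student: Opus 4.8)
The plan is to prove Corollary~\ref{lattice_class} as two separate assertions, relying on the Structural Theorem~\ref{thm:chain} for the first and on the duality observation $(i,\alpha^w_i)=(i,\beta^{\widetilde w}_i)$ together with Remark~\ref{rev} for the second.

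\textbf{First assertion (each class is a lattice interval).} First I would fix a $\sim_2$-equivalence class $\mathcal{C}$. By Lemma~\ref{props}(i), every pair of $\sim_2$-equivalent words has the same content, so $\mathcal{C} \subseteq W(\ell_a,\ell_b)$ for some fixed $\ell_a,\ell_b$. Since $W(\ell_a,\ell_b)$ is a finite distributive lattice and $\mathcal{C}$ is a finite nonempty subset, I would show $\mathcal{C}$ is closed under $\wedge$ and $\vee$: this is exactly Theorem~\ref{thm:chain}(ii), which gives $w\sim_2 w\vee v \sim_2 w\wedge v$ for any $w,v\in\mathcal{C}$, so $w\vee v, w\wedge v\in\mathcal{C}$. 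A finite sublattice of a lattice has a unique minimum $w_{\min}:=\bigwedge\mathcal{C}$ and unique maximum $w_{\max}:=\bigvee\mathcal{C}$, both lying in $\mathcal{C}$. It then remains to check $\mathcal{C}=[w_{\min},w_{\max}]$. The inclusion $\mathcal{C}\subseteq[w_{\min},w_{\max}]$ is immediate from the definitions of $w_{\min},w_{\max}$. For the reverse inclusion, take any $u$ with $w_{\min}\preceq u\preceq w_{\max}$; since $w_{\min}\sim_2 w_{\max}$ (both are in $\mathcal{C}$), Theorem~\ref{thm:chain}(i) applied with $w=w_{\min}$, $v=w_{\max}$ yields $w_{\min}\sim_2 u$, so $u\in\mathcal{C}$. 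This establishes that $\mathcal{C}$ is the interval $[w_{\min},w_{\max}]$.

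\textbf{Second assertion (dual of minimal is maximal).} Let $w$ be the unique minimal element of its class $\mathcal{C}\subseteq W(\ell_a,\ell_b)$. I would show two things: that $\widetilde w$ lies in a single well-defined $\sim_2$-class, and that $\widetilde w$ is the \emph{maximum} of that class. The key geometric fact is the stated identity $(i,\alpha^w_i)=(i,\beta^{\widetilde w}_i)$, i.e.\ passing to the dual word interchanges the roles of the $a$-height and $b$-height. Concretely, the path $\gamma^{\widetilde w}$ is the reflection of $\gamma^w$ across the main diagonal, so $\conv(\gamma^{\widetilde w,a})$ and $\conv(\gamma^{\widetilde w,b})$ are obtained from $\conv(\gamma^{w,b})$ and $\conv(\gamma^{w,a})$ respectively by the coordinate-swapping affine map $(x,y)\mapsto(y,x)$. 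By Corollary~\ref{polytopes}, this reflection map sends the condition $v\sim_2 w$ to the condition $\widetilde v\sim_2\widetilde w$, so duality is a well-defined involution on $\sim_2$-classes, carrying $\mathcal{C}$ bijectively onto the class $\widetilde{\mathcal{C}}:=\{\widetilde v: v\in\mathcal{C}\}$ of $\widetilde w$. Finally I would verify that duality reverses the order $\preceq$: reflecting paths across the diagonal sends $\gamma^v\preceq\gamma^{v'}$ to $\gamma^{\widetilde{v'}}\preceq\gamma^{\widetilde v}$ (a walk lying below another becomes a walk lying above the reflection of the other, after the swap). Hence an order-reversing bijection carries the minimum $w$ of $\mathcal{C}$ to the maximum $\widetilde w$ of $\widetilde{\mathcal{C}}$.

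\textbf{Expected main obstacle.} The lattice-interval part is essentially a formal consequence of Theorem~\ref{thm:chain} and should be routine. The genuinely delicate point is bookkeeping the order-reversal under duality: one must be careful about the convention for $\preceq$ (``$\gamma^w\preceq\gamma^{w'}$ if $\gamma^{w'}$ does not go above $\gamma^w$'') and check that reflection across the diagonal composed with the relabelling $a\leftrightarrow b$ indeed reverses rather than preserves this order. I would make this precise using the middle characterisation in the lemma preceding the proof of Theorem~\ref{thm:chain}, namely $\gamma^w\preceq\gamma^v \iff (k,\alpha^w_k)\trianglelefteq(k,\alpha^v_k)$ for all $k$, translating it through the identity $(i,\alpha^w_i)=(i,\beta^{\widetilde w}_i)$ to conclude the reversal cleanly rather than appealing to a picture.
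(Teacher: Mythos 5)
Your proof is correct and follows exactly the route the paper intends: the paper states this as an immediate consequence of Theorem~\ref{thm:chain} (parts (ii) and (i) giving closure under $\wedge,\vee$ and the interval property, respectively) together with the duality observation $(i,\alpha^w_i)=(i,\beta^{\widetilde w}_i)$, which is precisely your two-step argument. Your handling of the order-reversal under the involution $w\mapsto\widetilde w$, via the height characterisation of $\preceq$ and Corollary~\ref{polytopes}, is the right way to make the unproved second sentence of the corollary rigorous.
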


\begin{corollary}\label{cor:chain}
Suppose $w,v \in W(\ell_a,\ell_b)$. Then $w \sim_2 v$ if and only if there exists a non-negative integer $k$ and a sequence of words $u(i)$, for $i=0, \ldots, k$ such that:
\begin{itemize}
\item[(i)] $w=u(0) \sim_2 u(1) \sim_2 \cdots \sim_2 u(k)=v$; and
\item[(ii)] $u(i) \leftrightarrow u(i+1)$ for all $i$.
 \end{itemize} 
In particular, $w$ is an isoterm if and only if $w \not\sim_2 w'$ for all $w' \leftrightarrow w$. 
\end{corollary}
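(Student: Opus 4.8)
The plan is to deduce this corollary from the Structural Theorem (Theorem \ref{thm:chain}) together with the distributive lattice structure on $W(\ell_a, \ell_b)$. The reverse direction of the equivalence is immediate: if such a chain of words connected by single adjacent swaps exists, with each consecutive pair forming an identity, then $w \sim_2 v$ follows by transitivity of the relation $\sim_2$.

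For the forward direction, suppose $w \sim_2 v$ with $w, v \in W(\ell_a, \ell_b)$. First I would reduce to the case $w \preceq v$. By Theorem \ref{thm:chain}(ii), we have $w \sim_2 w \vee v \sim_2 w \wedge v$, so it suffices to connect $w \wedge v$ to $w$ and $w \wedge v$ to $v$ by swap-chains within the equivalence class (noting $w \wedge v \preceq w$ and $w \wedge v \preceq v$); concatenating these yields the desired chain between $w$ and $v$. Thus assume $w \preceq v$. The key combinatorial fact is that in the distributive lattice $W(\ell_a, \ell_b)$, two comparable words $u \prec u'$ with $u \preceq u'$ are connected by a saturated chain in which each covering step $u(i) \lessdot u(i+1)$ corresponds to pushing the staircase path up by a single unit square, which is exactly an adjacent swap $u(i) \leftrightarrow u(i+1)$ of the form replacing some factor $ba$ by $ab$. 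I would make this precise by induction on the area between the paths $\gamma^w$ and $\gamma^v$: whenever $w \prec v$, there is some position where the path of $w$ lies strictly below that of $v$, allowing a single adjacent swap producing $u$ with $w \lessdot u \preceq v$.

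The crucial point — and the main obstacle to address carefully — is verifying that \emph{every} word $u(i)$ along this saturated chain lies in the same $\sim_2$-class. This is precisely where Theorem \ref{thm:chain}(i) is applied: since $w \preceq u(i) \preceq v$ and $w \sim_2 v$, we conclude $u(i) \sim_2 w$, and hence $u(i) \sim_2 u(i+1)$ for each consecutive pair. This guarantees condition (i) of the corollary holds along the chain produced in part (ii). Combining the three chains (from $w$ down to $w \wedge v$, and from $w \wedge v$ up to $v$) gives the full sequence; each step is a single adjacent swap, and each intermediate word is $\sim_2$-equivalent to $w$ by the Structural Theorem.

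Finally, the characterisation of isoterms follows as a special case. If $w$ is an isoterm, then its $\sim_2$-class is the singleton $\{w\}$, so certainly $w \not\sim_2 w'$ for any $w' \leftrightarrow w$ with $w' \neq w$. Conversely, if $w$ is not an isoterm, then its class contains some $v \neq w$ with $w \sim_2 v$; applying the chain just constructed, the first step $w = u(0) \leftrightarrow u(1)$ satisfies $u(1) \sim_2 w$ with $u(1) \neq w$ (since the chain is nontrivial), exhibiting a word $w' := u(1) \leftrightarrow w$ with $w' \sim_2 w$. Thus $w$ is an isoterm if and only if no single adjacent swap yields an $\sim_2$-equivalent word.
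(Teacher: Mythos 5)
Your proof is correct and follows exactly the route the paper intends: the paper states this corollary without proof as an immediate consequence of Theorem \ref{thm:chain}, the point being that each $\sim_2$-class is an interval of the distributive lattice $W(\ell_a,\ell_b)$ (Corollary \ref{lattice_class}), and comparable words in an interval are joined by a saturated chain of covering moves, each of which is an adjacent swap, with all intermediate words equivalent by Theorem \ref{thm:chain}(i). Your reduction of the incomparable case via $w \wedge v$ using Theorem \ref{thm:chain}(ii), and the induction on the area between the two paths, are precisely the standard filling-in of that argument.
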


For general alphabets, $\sim_2$-equivalence classes need not be connected through adjacent swaps as the following example illustrates.
\begin{example}
\label{ex:threeswap}
Consider the words
\begin{eqnarray*}
w=abc\; cba\; abc \; abc \; abc\; cba\; abc\\
v=abc\; cba\; abc \; cba \; abc\; cba\; abc.
\end{eqnarray*}
It is straightforward to verify that
\begin{eqnarray*}
A(w) =& \conv\{ (0,0,0), (1,2,2), (5,6,6), (6,6,6)\}& = A(v),\\
B(w) = &\conv\{ (1,0,0), (1,1,2), (5,5,6), (7,6,6)\}& = B(v),\\
C(w) =& \conv\{ (1,1,0), (1,1,1), (5,5,5), (7,7,6)\} &= C(v),\\
&B(w) \cap A(w) = B(w) \cap C(w) = \emptyset.&
\end{eqnarray*}
It follows from the above that $w \sim_2 v$, and moreover, if $w \leftrightarrow u$ with $w \sim_2 u$, then $u$ cannot have been obtained via a swap involving $b$, since this requires some point of $B(w)$ to lie in one of $A(w)$ or $C(w)$. Noting that $v$ cannot be reached from $w$ via \emph{adjacent} swaps involving $a$ and $c$ only, we see that the equivalence class of $w$ is not connected through adjacent swaps. (In fact, one finds that $w$ is not $\sim_2$ related to any of its neighbours and hence not connected to any other word of its $\sim_2$-class by adjacent swaps.)
\end{example}

Theorem \ref{thm:chain} gives a short geometric proof of Adjan's result that the identities in Example \ref{adjan} are minimal length identities for the bicyclic monoid. We begin this proof with the following lemma.
\begin{lemma}
\label{lem:iso4}
Let $v \in W(\ell_a, \ell_b)$ and suppose that $v$ is not an isoterm for $\ut{2}$. Then $\ell_a \geq 4$, and if $\ell_a = 4$, then $\ell_b > 6$.
\end{lemma}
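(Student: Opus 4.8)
The plan is to leverage the results of Lemma~\ref{props}, together with the combinatorial constraint that a non-isoterm must have a $\sim_2$-equivalent partner reachable through adjacent swaps (Corollary~\ref{cor:chain}), to force lower bounds on the number of $a$'s and $b$'s. The first observation is that if $v$ is not an isoterm, then by Corollary~\ref{cor:chain} there is some word $v' \leftrightarrow v$ with $v' \sim_2 v$ and $v' \neq v$; that is, $v$ admits a non-trivial adjacent swap of an $ab$ factor to $ba$ (or vice versa) while preserving both degree-$1$ polygons $A(v)$ and $B(v)$. I would begin by analysing what such a swap does to the path $\gamma^v$: swapping the $(i+1)$th $a$ past the $(j+1)$th $b$ changes exactly one lattice point of the path, replacing the corner $(i,j+1)$ by $(i+1,j)$ (or the reverse). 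For this swap to preserve $\conv(\alpha^v)$ and $\conv(\beta^v)$, the affected points must be non-vertices of both polygons, lying strictly in the interior of edges.

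\textbf{Forcing the bound on $\ell_a$.} The key step is to use Lemma~\ref{props}(ii)--(iv) to pin down the structure near the ends of the word. By part~(ii), the first block of $a$'s and the leading structure is rigid: the first maximal run of each letter is determined up to content, fixing vertices of the polygons. Dually, part~(iii) fixes the trailing blocks. If $\ell_a$ is small, these rigid constraints at both ends overlap and leave no room for a free interior corner that can be swapped. Concretely, I would argue that the swapped corner $(i,j+1) \leftrightarrow (i+1,j)$ must be \emph{interior} to an edge of $A(v)$, which requires the existence of at least one $a$ strictly before and at least one $a$ strictly after the swap position whose heights sandwich the corner on a common line segment; similarly for $B(v)$. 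Counting the minimal number of $a$-steps needed to realise a genuine edge-interior lattice point on \emph{both} polygons simultaneously should yield $\ell_a \geq 4$. The examples in Example~\ref{adjan}, with $\ell_a = 4$ and $\ell_b = 8$, show this bound is essentially tight and motivate the threshold.

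\textbf{The case $\ell_a = 4$.} For the refinement, I would assume $\ell_a = 4$ and suppose for contradiction that $\ell_b \leq 6$, then derive a contradiction by a finer analysis of the polygons. With $\ell_a = 4$ the $a$-height $\alpha^v$ is a non-decreasing sequence $\alpha_0 \le \alpha_1 \le \alpha_2 \le \alpha_3$ of four values in $\{0,\dots,\ell_b\}$, and $A(v)$ is the convex hull of the four points $(i,\alpha_i)$. A swap that preserves $A(v)$ must move one of the two interior points $(1,\alpha_1)$ or $(2,\alpha_2)$ along an edge, which forces a collinearity condition among three consecutive $(i,\alpha_i)$; simultaneously preserving $B(v)$ forces a matching collinearity among the $b$-heights. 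I would enumerate the possible collinear configurations and show that each compatible pair of conditions forces the relevant height differences to be large enough that $\ell_b = \alpha_3 + (\text{tail}) > 6$. The main obstacle I anticipate is the casework here: ensuring that the collinearity required on the $A$-side is genuinely \emph{incompatible} with a small total $b$-count once the dual collinearity on the $B$-side is also imposed. I expect the cleanest route is to translate both constraints into the single swap's effect on the path and show that a legal interior swap with $\ell_a=4$ necessarily stretches the path horizontally by enough to require $\ell_b \ge 7$; the examples again confirm $\ell_b = 8$ is achievable, with $\ell_b = 7$ the delicate boundary to rule out.
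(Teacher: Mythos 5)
Your starting point coincides with the paper's: by Corollary~\ref{cor:chain} a non-isoterm $v$ admits an adjacent swap $v \leftrightarrow u$ with $v \sim_2 u$, and Lemma~\ref{props} forbids the swap from touching the first or last occurrence of a letter, so the swap index $i$ satisfies $1 \le i \le \ell_a-2$. But the geometric heart of your argument is incorrect. Preserving $A(v)$ does \emph{not} force the swapped point to move ``along an edge'', and it forces no collinearity among consecutive points of $\alpha^v$: the exact condition is that \emph{both} $(i,\alpha^v_i)$ and $(i,\alpha^v_i+1)$ be non-vertices of the common hull, i.e.\ that both lie in the vertical slice at $x=i$ of the hull of the remaining points. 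Such points can sit strictly in the interior of the polygon: for $\ell_a=4$ take $\alpha^v=(1,6,14,15)$; then $(1,6)$ and $(1,7)$ lie strictly inside the triangle $\conv\{(0,1),(2,14),(3,15)\}$, so the swap $\alpha^v_1 = 6 \mapsto 7$ preserves $A(v)$ with no three heights collinear. (Note also that the old and new points have the same $x$-coordinate and different $y$-coordinates, so they can never lie on a common non-vertical edge; ``moving along an edge'' is not even a coherent picture here.) Consequently an enumeration of collinear configurations cannot rule out $\ell_b \le 6$: the cases you would miss --- swapped points interior to the triangle --- are exactly the generic ones. The paper's proof avoids this by a lattice-point count in the slice: writing $e=\alpha^v_2-\alpha^v_0$ and $f=\alpha^v_3-\alpha^v_2$ (after reducing to $i=1$ by reversal), the existence of two lattice points at $x=1$ gives $\lceil (e+f)/3\rceil+1 \le \lfloor e/2\rfloor$ or $\lceil e/2\rceil+1 \le \lfloor (e+f)/3\rfloor$, according as $(2,\alpha^v_2)$ lies above or below the segment from $(0,\alpha^v_0)$ to $(3,\alpha^v_3)$; either inequality forces $e+f\ge 6$, whence $\ell_b \ge \alpha^v_0+e+f \ge 7$. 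This uniform argument covers edge and interior configurations alike, which is what your strategy is missing.

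There are further gaps worth flagging. First, the strict bound $\ell_b>6$ requires $\alpha^v_0 \ge 1$, and this is where the $B$-polygon genuinely enters: if $\alpha^v_0=0$, the swap alters the first block of $b$'s, contradicting $B(v)=B(u)$ by Lemma~\ref{props}; without this observation one only gets $\ell_b \ge 6$. Your outline never isolates it. Second, your argument for $\ell_a \ge 4$ is left as a heuristic (``should yield''); the concrete step is short: Lemma~\ref{props} gives $i\notin\{0,\ell_a-1\}$, hence $\ell_a\ge 3$, and $\ell_a=3$ is impossible because $A(v)$ would then have to equal the segment $\conv\{(0,\alpha^v_0),(2,\alpha^v_2)\}$ while containing the vertically aligned points $(1,\alpha^v_1)$ and $(1,\alpha^v_1+1)$, which cannot all be collinear. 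Finally, two factual slips: the identities of Example~\ref{adjan} have $\ell_a=\ell_b=5$, not $(\ell_a,\ell_b)=(4,8)$; and since the lemma asserts $\ell_b>6$, the value $\ell_b=7$ is \emph{consistent} with the statement rather than ``the delicate boundary to rule out'' --- your closing sentence has the logic reversed.
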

\begin{proof}
By Corollary \ref{cor:chain}, there exists a word $u$ with $v \leftrightarrow u$  and $v \sim_2 u$. In other words, $\alpha^{v}$ and $\alpha^{u}$ differ by one in exactly one coordinate, say, $\alpha^{u}_i = \alpha^{v}_i + 1$ for some $0 \leq i \leq \ell_a-1$, where neither $(i, \alpha^{v}_i)$ nor $(i,\alpha^{v}_i+1)$ is a vertex of $A(v)$. By Lemma \ref{props}, $i \neq 0$ and $i \neq \ell_a-1$. Thus $\ell_a \geq 3$. If $\ell_a = 3$, this implies
$$ A(v) = \conv\{(0,\alpha^{v}_0), (2,\alpha^{v}_2)\} = \conv\{(0,\alpha^{v}_0), (2,\alpha^{v}_2), (1, \alpha^{v}_1), (1, \alpha^{v}_1+1)\}, $$
providing an immediate contradiction, since the four points above cannot be co-linear. Thus we must have $\ell_a \geq 4$. Suppose then that $\ell_a = 4$. Then either $i = 1$ or $i = 2$. By considering the reverse words if necessary, we can assume that $i = 1$ and hence
\begin{align}
A(v) &= \conv\{(0,\alpha^{v}_0), (1,\alpha^{v}_1), (1,\alpha^{v}_1+1), (2,\alpha^{v}_2), (3,\alpha^{v}_3)\} \nonumber \\
&= \conv\{(0,\alpha^{v}_0),(2,\alpha^{v}_2), (3,\alpha^{v}_3)\} \label{eqn:1.inside}
\end{align}
Since these points cannot be co-linear, $A(v)$ must be the triangle with the common three points of these two sets as vertices. Let $e = \alpha^{v}_2 - \alpha^{v}_0$ and $f = \alpha^{v}_3 - \alpha^{v}_2$. Note that $e,f \in \mathbb{N}_{\geq 0}$ as $\alpha^v$ is a staircase path. In fact $e, \alpha^v_0 >0$, since $\alpha^v_2 \geq \alpha^v_1+1 \geq \alpha^v_1 \geq \alpha^v_0$, whilst taking $\alpha^v_0=0$ yields a difference in the first block of $b$'s of the two words $v$ and $u$ which, by Lemma~\ref{props}, would contradict $B(v)=B(u)$. We consider two cases.
\begin{itemize}[leftmargin=*]
  \item[(I)] Suppose that the point $(2,\alpha^v_2)$ lies above the line segment $\conv\{(0,\alpha^v_0), (3,\alpha^v_3)\}$. It follows from (\ref{eqn:1.inside}) that $e$ and $f$ must satisfy 
$$\lceil \frac{e+f}{3} \rceil +1 \leq \lfloor \frac{e}{2} \rfloor,$$
which in turn implies $e \geq 6+2f$. Then $\ell_b \geq \alpha^{v}_0+e+f \geq \alpha^v_0 + 6>6$.
  \item[(II)] Suppose that the point $(2,\alpha^v_2)$ lies below the line segment $\conv\{(0,\alpha^v_0), (3,\alpha^v_3)\}$. It follows from (\ref{eqn:1.inside}) that $e$ and $f$ must satisfy 
$$\lceil \frac{e}{2}\rceil + 1 \leq \lfloor \frac{e+f}{3} \rfloor.$$
Since $e \neq 0$,  we see that $\ell_b \geq \alpha^{v}_0+e+f \geq \alpha^v_0 + 3 \lceil \frac{e}{2}\rceil + 3 \geq \alpha^v_0 +6>6$.
\end{itemize}
\end{proof}

\begin{theorem}\label{thm:adjan}
 Let $w \in W(\ell_a, \ell_b)$. If $\ell_a+\ell_b \leq 9$, then $w$ is an isoterm for the bicyclic monoid. Moreover, the only identities for the bicyclic monoid involving words of length $10$  are given by
\begin{eqnarray*}
abba \; ab \; abba &\sim_2 & abba \; ba \; abba,\;\;\; baab \; ab \; baab \sim_2  baab \; ba \; baab,\\
abba \; ab \; baab &\sim_2& abba \; ba \; baab,\;\;\; baab \; ab \; abba \sim_2 baab \; ba \; abba.
\end{eqnarray*}
\end{theorem}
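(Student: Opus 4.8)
The plan is to combine the word-length constraint with Lemma~\ref{lem:iso4} and the symmetry tools (word reversal from Remark~\ref{rev} and word duality) to reduce the problem to a small finite case analysis. First I would establish that any non-isoterm $w \in W(\ell_a,\ell_b)$ must satisfy $\ell_a + \ell_b \geq 10$: by Lemma~\ref{lem:iso4} a non-isoterm has $\ell_a \geq 4$, and dually (swapping the roles of $a$ and $b$ via the dual word $\widetilde{w}$, for which Lemma~\ref{props} guarantees the $b$-height plays a symmetric role) it must have $\ell_b \geq 4$ as well. If $\ell_a = 4$ then Lemma~\ref{lem:iso4} forces $\ell_b > 6$, so $\ell_a + \ell_b \geq 11$; by the dual statement, $\ell_b = 4$ likewise forces $\ell_a > 6$. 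Hence the only way to achieve $\ell_a + \ell_b \leq 10$ with both coordinates at least $4$ is $\ell_a = \ell_b = 5$, giving total length exactly $10$. This immediately proves the first assertion (for $\ell_a + \ell_b \leq 9$ there are no non-isoterms) and pins down the content of any length-$10$ identity to $(5,5)$.

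Next I would analyze the case $\ell_a = \ell_b = 5$ directly using Corollary~\ref{cor:chain}, which tells us it suffices to find pairs $v \leftrightarrow u$ with $v \sim_2 u$ realized by a single adjacent swap, and then to see which equivalence classes they generate. Following the proof of Lemma~\ref{lem:iso4} with $\ell_a = 4$ replaced by $\ell_a = 5$, a swap occurs at some interior coordinate $i \in \{1,2,3\}$ where neither $(i,\alpha^v_i)$ nor $(i,\alpha^v_i+1)$ is a vertex of $A(v)$; by the reversal symmetry I may assume $i \in \{1,2\}$, and the dual symmetry further restricts the possibilities. The key computation is to write down the collinearity/non-vertex conditions on the heights $\alpha^v_0, \ldots, \alpha^v_4 \in \{0,\ldots,5\}$, together with the constraint $\alpha^v_4 = 5$ (forced since the last block of $b$'s must be empty or else Lemma~\ref{props} is violated, analogous to the $\alpha^v_0 > 0$ argument) and the symmetric boundary conditions, and to enumerate the finitely many non-decreasing sequences satisfying them.

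The main obstacle I expect is the finite enumeration: checking that the swap conditions, once combined with both boundary constraints ($\alpha^v_0 > 0$ from the first-block argument and its dual at the top) and the requirement that the swapped point be strictly interior to the edges of the triangle $A(v)$, are satisfiable only for the height sequence corresponding to $v = abba\,ab\,abba$ and the sequences obtained from it by reversal and duality. Here the tightness of the length-$10$ bound helps: with $\ell_b = 5$ there is very little room, and one can check that the triangle $A(v)$ must have a specific shape (the point $(2,\alpha^v_2)$ lying strictly between the extreme heights while forcing the intermediate height to admit a non-vertex lattice point directly above or below it). I would verify that each admissible sequence yields, via Lemma~\ref{lem:w.from.a}, one of the four listed words on the left, and that its dual/reverse accounts for the remaining listed words. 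Finally I would confirm each displayed pair is genuinely an identity by the degree-$1$ signature computation (as in Example~\ref{adjan}, where $\conv(\alpha^w) = \conv(\alpha^v)$ and $\conv(\beta^w) = \conv(\beta^v)$ are checked directly), and invoke Theorem~\ref{thm:chain}(i) together with the fact that within $W(5,5)$ the words $abba\,ab\,abba$ and $abba\,ba\,abba$ are adjacent in the lattice $\preceq$ to conclude that no larger equivalence class arises.
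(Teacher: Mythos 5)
Your first paragraph is fine and is exactly the paper's route: Lemma~\ref{lem:iso4} together with the $a$--$b$ duality gives $\min(\ell_a,\ell_b)\geq 4$, with $\min(\ell_a,\ell_b)=4$ forcing $\max(\ell_a,\ell_b)>6$, hence any non-isoterm has $\ell_a+\ell_b\geq 10$, with equality only if $\ell_a=\ell_b=5$. The gap is in your treatment of $W(5,5)$: the boundary constraints you impose there are not forced, and they are violated by the very identities the theorem lists. The constraint $\alpha^v_0>0$ in the proof of Lemma~\ref{lem:iso4} is derived under the assumption that the swap occurs at index $i=1$; it is not a property of every word admitting a swap. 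In $W(5,5)$ all four minimal identities have their swap at the middle index $i=2$, where no such constraint holds: computing heights, $abba\,ab\,abba$ has $(\alpha_0,\alpha_4)=(0,5)$, $baab\,ab\,baab$ has $(1,4)$, $abba\,ab\,baab$ has $(0,4)$, and $baab\,ab\,abba$ has $(1,5)$, so all four boundary patterns occur. In particular your claim that $\alpha^v_4=5$ is forced (``the last block of $b$'s must be empty'') is false: the words in the identities $abba\,ab\,baab\sim_2 abba\,ba\,baab$ and $baab\,ab\,baab\sim_2 baab\,ba\,baab$ end in $b$. Lemma~\ref{props}(iii) only says that a nonempty last block is an invariant of the class; it never forces it to be empty. (Moreover, the correct reversal-image of the $i=1$ constraint ``$v$ starts with $b$'' is, for a swap at $i=\ell_a-2$, ``$v$ ends with $b$'', i.e.\ $\alpha^v_{\ell_a-1}<\ell_b$ --- the opposite of what you wrote.) The damage is not cosmetic: imposing both $\alpha^v_0>0$ and $\alpha^v_4=5$ leaves only $baab\,ab\,abba\sim_2 baab\,ba\,abba$, and the orbit of that identity under reversal and duality consists of it and $abba\,ab\,baab\sim_2 abba\,ba\,baab$ only; the other two identities, including Adjan's $abba\,ab\,abba\sim_2 abba\,ba\,abba$, form a separate orbit under these symmetries and would never be produced. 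So the enumeration as you describe it is provably incomplete.

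Note also that the paper does not attempt this by-hand case analysis: after the identical length/content reduction, it simply enumerates all $\binom{10}{5}=252$ words of $W(5,5)$ algorithmically (Algorithm~\ref{alg:list.identities2}), which is legitimate since the check for each word is finite and effective. If you want a genuinely by-hand proof, the analysis must be organised by swap index $i\in\{1,2,3\}$ (reversal reduces this to $i\in\{1,2\}$), with boundary constraints derived separately inside each case: $\alpha^v_0>0$ is legitimate only in the case $i=1$, and that case must then be shown to be empty for $\ell_b=5$ by an argument like that of Lemma~\ref{lem:iso4}; in the main case $i=2$ you get no constraint on $\alpha^v_0$ or $\alpha^v_4$, and you must instead use the $B$-polygon conditions (non-vertexhood of the two swapped $\beta$-points), which your sketch defers to a final verification step but which are exactly what cuts the $i=2$ case down to the four listed identities. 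Your concluding step (Theorem~\ref{thm:chain}(i) plus Corollary~\ref{lattice_class} to see each class is the two-element interval between swap-adjacent words) is fine once a correct enumeration is in hand.
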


\begin{proof}
Suppose $w, v \in W(\ell_a, \ell_b)$ with $w \sim_2 v$. By Lemma \ref{lem:iso4},  $\min(\ell_a,\ell_b) \geq 4$ and $\min(\ell_a,\ell_b) =4$ implies $\max(\ell_a,\ell_b) > 6$. Thus $\ell_a+\ell_b \geq 10$. Suppose $\ell_a + \ell_b = 10$. By Lemma \ref{lem:iso4}, we must have $\ell_a = \ell_b = 5$. Enumerating all possible configurations of such words (using Algorithm \ref{alg:list.identities2} from Section \ref{sec:alg}, for example) give us the desired identities. Figure \ref{fig:adjan.5} in Example \ref{adjan} visualises two of these identities.
\end{proof}

We now give an explicit construction of the minimal and maximal elements (with respect to $\preceq$ ) of a given $\sim_2$ equivalence class based on the signature of the class.  As we shall discuss in Section \ref{sec:alg}, this construction gives a significant speedup to various algorithms for two-letter identities in $\ut{n}$. Due to Corollary~\ref{lattice_class}, it is sufficient to construct the maximum element. 

First we introduce some notations. 
For each point $p \in \mathbb{N}_{\geq 0}^2$ let us write $x(p)$ and $y(p)$ to denote the first and second coordinates of $p$, respectively. For $p \trianglelefteq q$ we write $[p,q]$ to denote the set of all points $r \in \mathbb{N}_{\geq 0}^2$ such that $p \trianglelefteq r \trianglelefteq q$. 
Similarly, we write $(p,q]$, $[p,q)$ and $(p,q)$ to denote the corresponding intervals of $\mathbb{N}_0^2$ with respect to the partial order $\trianglelefteq$ which exclude one or both of the end points.  

Let $(A,B)$ be the signature of some word in $W(\ell_a,\ell_b)$, and let $\mathcal{V}$ be the monotonically increasing sequence 
$$(0,0) = p_1 \triangleleft p_2 \triangleleft \dots \triangleleft p_M \triangleleft p_{M+1} = (\ell_a,\ell_b)$$ 
consisting of the vertices of $A$ and the vertices of $B$, together with the point $(\ell_a,\ell_b)$.
Define the label function $\mathsf{label}: \mathcal{V} \to \{a,b,\emptyset\}$, where $\lab(p_i) = a$ if $p_i$ is a vertex of $A$; $\lab(p_i) = b$ if $p_i$ is a vertex of $B$, and $\lab(p_i) = \emptyset$ if $p_i = (\ell_a,\ell_b)$.
If $\gamma$ is a staircase path with signature $(A,B)$, then $\gamma$ must go through all points in $\mathcal{V}$. In particular, 
$$\gamma = \bigcup_{i=1}^M \gamma(i),$$ 
where $\gamma(i)$ is a staircase path from $p_i$ to $p_{i+1}$. We shall construct the maximal path $\overline{\gamma}$ by constructing $\overline{\gamma}(i)$ for each $i$. This is done through iterative calls to the procedure \texttt{MaxSegment}, each of which constructs a segment of $\overline{\gamma}(i)$. Figures \ref{fig:divide} and \ref{fig:conquer} illustrate the idea. For proof convenience, the function \texttt{MaxPath} returns both the path $\overline{\gamma}(i)$ and an extension of the function $\mathsf{label}$ to the points of $\overline{\gamma}(i)$.

\begin{algorithm}\caption{\texttt{Maxpath}} \label{alg:maxpath}
\flushleft 
\textbf{Input}: points $p_i,p_{i+1} \in \mathcal{V}$, $\lab(p_i),\lab(p_{i+1})$ \\
\textbf{Output}: $\overline{\gamma}(i) \subset \mathbb{N}^2_{\geq 0}$ and a function $\lab_i: \overline{\gamma}(i) \to \{a,b,\emptyset\}$ 
\begin{algorithmic}[1]
\State Set $p \gets p_i$, $\lab_i(p) \gets \lab(p_i)$, $\overline{\gamma}(i) \gets \{p\}$
\While{$p \neq p_{i+1}$}
  \State $(p', \lab_i(p')) \gets \mathsf{MaxSegment}(p,p_{i+1},\lab_i(p),\lab(p_{i+1}))$
  \State $\overline{\gamma}(i) \gets \overline{\gamma}(i) \cup [p,p']$
  \For{$q \in (p,p')$} $\lab_i(q) \gets \lab_i(p)$ \EndFor
  \State $p \gets p'$, $\lab_i(p) \gets \lab_i(p')$
\EndWhile 
\State \textbf{return } $(\overline{\gamma}(i),\lab_i)$
\Procedure{MaxSegment}{$p,p_{i+1},\lab(p),\lab(p_{i+1})$}
\If{$y(p) = y(p_{i+1})$ or $x(p) = x(p_{i+1})$} 
  \State $p' \gets p_{i+1}$, $\lab(p') \gets \lab(p_{i+1})$
\EndIf
\If{$\lab(p) = b$} 
  \State $Y \gets \{k: y(p) < k \leq y(p_{i+1}), (x(p),k) \in A, (x(p),k-1) \in B\}$ \label{ln:a-set}
  \State $y \gets \max Y$ 
  \State $p' \gets (x(p), y)$, $\lab(p') \gets a$
\EndIf
\If{$\lab(p) = a$} 
  \State $X \gets \{k: x(p) < k \leq x(p_{i+1}), (k,y(p)) \in B, (k-1,y(p)) \in A, (k,y(p)+r) \in A \mbox{ for some } r > 0\}$ \label{ln:b-set}
  \State $x \gets \min X$.
  \State $p' \gets (x, y(p))$, $\lab(p') \gets b$
\EndIf
\State \textbf{return } $(p',\lab(p'))$
\EndProcedure
\end{algorithmic}
\end{algorithm}
\begin{definition}\label{defn:min.max}
For $i = 1, \dots, M$, let $(\overline{\gamma}(i),\lab_i)$ denote the output of the \texttt{Maxpath} algorithm with input $(p_i, p_{i+1})$. Let $\overline{\gamma} := \bigcup_{i=1}^M \overline{\gamma}(i)$, and define $\lab: \overline{\gamma} \to \{a,b,\emptyset\}$ by $\lab(p) = \lab_i(p) \mbox{ if } p \in \overline{\gamma}(i).$ 
\end{definition}

\begin{theorem}\label{thm:min.max}
Let $w \in W(\ell_a, \ell_b)$ be a word with signature $(A, B)$. The path of the unique maximal word in the $\sim_2$-class of $w$ is given by $\overline{\gamma}$. 
\end{theorem}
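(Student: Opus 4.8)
The goal is to prove that the path $\overline{\gamma}$ produced by iterating \texttt{Maxpath} over the consecutive vertex pairs $(p_i, p_{i+1})$ is exactly the path of the maximal word in the $\sim_2$-class of $w$. By Corollary~\ref{lattice_class} the class is an interval of the distributive lattice $W(\ell_a,\ell_b)$, so a unique maximal element exists; by Corollary~\ref{polytopes} a word $v$ lies in the class if and only if $\conv(\alpha^v)=A$ and $\conv(\beta^v)=B$. My strategy is therefore to establish two things: (1) $\overline{\gamma}$ is a legitimate staircase path with signature $(A,B)$, so it is the path of \emph{some} word $v^{\max}$ in the class; and (2) no word in the class lies strictly above $v^{\max}$ in the $\preceq$ order, i.e. $\overline{\gamma}$ is the \emph{highest} staircase path realising the signature $(A,B)$. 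Since the construction is local (on each segment $[p_i,p_{i+1}]$ between consecutive signature vertices, and within that on each \texttt{MaxSegment} step), both claims should reduce to segment-wise statements.

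\textbf{Key steps.} First I would record the reduction: any staircase path $\gamma$ with signature $(A,B)$ must pass through every point of $\mathcal{V}$ (the vertices of $A$ and $B$ are forced lattice points of any realising path, by Lemma~\ref{props}(i) and the definition of $\alpha^w,\beta^w$), so it suffices to maximise each sub-path $\gamma(i)$ from $p_i$ to $p_{i+1}$ independently subject to the constraint that the combined path still has the right convex hulls. The point $\preceq$ being maximal means going \emph{up} (north) as early as possible is \emph{bad}: recall $\gamma\preceq\gamma'$ means $\gamma'$ does not rise above $\gamma$, so the maximal path is the one that stays as \emph{low} as possible, moving east (reading $a$'s) before north (reading $b$'s) whenever permitted. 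The heart of the argument is to show that \texttt{MaxSegment} makes exactly the greedy low-as-possible choice consistent with the signature constraints. When the current label is $a$ (we have just placed an $a$-vertex), the set $X$ in line~\ref{ln:b-set} collects precisely the horizontal positions $k$ at which inserting a $b$-step is forced by the requirement that $(k,y(p))$ be a $b$-height point of the right form while keeping $A$ intact; taking $x=\min X$ delays the northward move maximally. Dually, when the label is $b$, the set $Y$ in line~\ref{ln:a-set} records where the next $a$-vertex must sit, and $y=\max Y$ pushes the eastward move as far right as the $A$-polytope permits. I would verify that these choices (i) keep $(x(p'),y(p'))\in A$ or $B$ as appropriate, so the signature is preserved, and (ii) are forced from below, so no realising path can sit strictly higher on $[p_i,p_{i+1}]$.

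\textbf{The main obstacle.} The delicate part is the interaction between the two polytope constraints: I must show that the greedy choice which maximises the path locally does not accidentally destroy the convex hull on the \emph{other} coordinate. Concretely, when \texttt{MaxSegment} chooses a $b$-step as late as possible (minimising $X$), I need to confirm that the intermediate horizontal run $(p,p')$ — whose interior points get labelled with the same letter as $p$ by the inner \texttt{for} loop — does not introduce a spurious vertex of $\conv(\alpha^{v^{\max}})$ or $\conv(\beta^{v^{\max}})$ outside $A\cup B$, and conversely that every genuine vertex of $A$ and $B$ is hit. This is exactly the phenomenon flagged in the remark after Proposition~\ref{prop:transformation}, that convex hull does not commute with the path constraints, so the bookkeeping must be done carefully: I expect to argue by induction on the \texttt{while}-loop iterations that after each \texttt{MaxSegment} call the partial path realises the portion of $(A,B)$ lying in $[p_i,p']$ and is maximal there, using the defining inequalities of $A$ and $B$ (lines~\ref{ln:a-set} and \ref{ref:b-set} encode precisely the ``a point of $A$ immediately above a point of $B$'' adjacency that characterises a valid turning point). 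Once the segment-wise maximality and signature-preservation are both established, concatenating over $i=1,\dots,M$ and invoking Corollary~\ref{polytopes} and Corollary~\ref{lattice_class} gives that $\overline{\gamma}$ is the maximal word of the class, completing the proof.
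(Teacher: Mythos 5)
Your high-level architecture is the same as the paper's: every realising path must pass through the points of $\mathcal{V}$, so one works segment by segment, and on each segment one inducts over the calls to \textsf{MaxSegment}, proving simultaneously that the construction stays inside the signature and that it dominates every realising path. However, the heart of your argument reads the algorithm backwards, and your proposal contradicts itself on exactly this point. In ``The plan'' you correctly state that $\overline{\gamma}$ must be shown to be the \emph{highest} staircase path realising $(A,B)$; in ``Key steps'' you instead assert that the maximal path ``stays as low as possible, moving east before north whenever permitted,'' and you read the greedy steps accordingly: that $x=\min X$ ``delays the northward move maximally,'' and that $y=\max Y$ ``pushes the eastward move as far right as the $A$-polytope permits.'' Both readings are false: taking $x=\min X$ in line \ref{ln:b-set} makes the East run as \emph{short} as possible, i.e.\ turns North as \emph{early} as possible, and taking $y=\max Y$ in line \ref{ln:a-set} makes the North run as \emph{long} as possible. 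So \textsf{MaxSegment} is greedy-North, and it has to be: by the lemma in Section \ref{sec:main} characterising the order via heights, $\gamma^w\preceq\gamma^v$ if and only if $\alpha^w_k\le\alpha^v_k$ for all $k$, so the $\preceq$-maximal word is the one whose path rises \emph{earliest} (compare Example \ref{ex:min.max} and Figure \ref{fig:conquer}). The prose sentence in Section \ref{sec:main} defining $\preceq$ does have the roles of $\gamma,\gamma'$ swapped, which may be the source of your confusion, but the lemma, the figures and the worked example fix the convention unambiguously. As written, your ``Key steps'' would be proving that the algorithm outputs the \emph{lowest} realising path; that statement is false --- the lowest path is $\underline{\gamma}$, obtained by dualising --- so the proof collapses at its central step.

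Beyond the orientation error, two pieces of real work in the paper's proof are missing from your sketch even after the direction is corrected. First, well-definedness: you never show that the sets $X$ and $Y$ are nonempty, so that $p'$ exists at each iteration. The paper does this by exhibiting an explicit member coming from an arbitrary realising path $\gamma$ (namely $\alpha_{x(p)}\in Y$, respectively $\beta_{y(p)}\in X$); in the inductive steps this is genuinely delicate --- for instance, showing $(\beta_{y(p)}-1,\,y(p))\in A$ requires a convexity argument using two neighbouring points of $\alpha$ together with the current point $p$. Second, the induction invariant must be pointwise and two-sided: for every newly created point $q$ with $\lab(q)=a$ one needs both $q\in A$ (so the construction creates no spurious vertices and $A(\overline{\gamma})=A$ at the end) and $\alpha_{x(q)}\le y(q)$ (so $\overline{\gamma}$ dominates every realising path), and dually for label $b$; the two halves feed each other from one iteration to the next. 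Your proposed invariant (``the partial path realises the portion of $(A,B)$ in $[p_i,p']$ and is maximal there'') is the right idea, but it would have to be sharpened to this explicit form before the case analysis can actually be carried out.
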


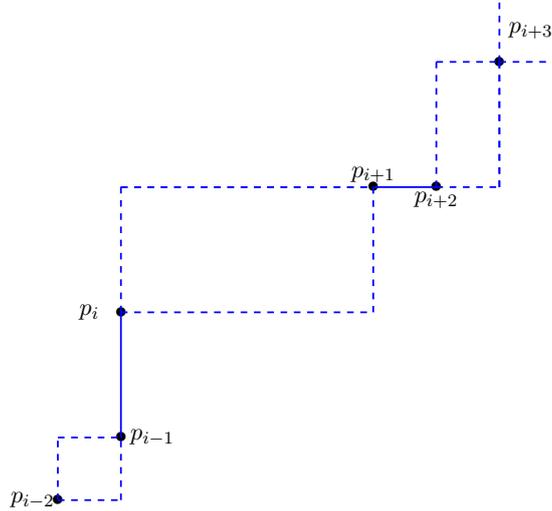
\begin{figure}[t!]
\captionsetup{width=1\linewidth}
\centering  
\resizebox{0.6\textwidth}{!}{
\begin{tikzpicture}
  \foreach \Point in {(-1,-1), (0,0), (0,2), (4,4), (5,4), (6,6)}{
    \node at \Point {\textbullet};
}
\draw[blue, thick, dashed](-1,-1)-- (-1,0);
\draw[blue, thick, dashed](-1,-1)-- (0,-1);
\draw[blue, thick, dashed](-1,0)-- (0,0);
\draw[blue, thick, dashed](0,-1)-- (0,0);
\draw[blue, thick](0,0)-- (0,2);
\draw[blue, thick](4,4)-- (5,4);
\draw[blue, thick, dashed](0,2)-- (0,4);
\draw[blue, thick, dashed](0,4)-- (4,4);
\draw[blue, thick, dashed](0,2)-- (4,2);
\draw[blue, thick, dashed](4,4)-- (4,2);
\draw[blue, thick, dashed](0,4)-- (4,4);
\draw[blue, thick, dashed](5,4)-- (6,4);
\draw[blue, thick, dashed](5,4)-- (5,6);
\draw[blue, thick, dashed](5,6)-- (6,6);
\draw[blue, thick, dashed](6,4)-- (6,6);
\draw[blue, thick, dashed](6,4)-- (6,6);
\draw[blue, thick, dashed](6,6)-- (6,7);
\draw[blue, thick, dashed](6,6)-- (7,6);

\node at (-1.4, -1) {$p_{i-2}$};
\node at (0.5, 0) {$p_{i-1}$};
\node at (-0.5, 2) {$p_i$};
\node at (4, 4.2) {$p_{i+1}$};
\node at (5, 3.8) {$p_{i+2}$};
\node at (6.5, 6.5) {$p_{i+3}$};
\end{tikzpicture}
}
\caption{Algorithm \textsf{Maxpath} constructs a staircase path from $(0,0)$ to $(\ell_a, \ell_b)$, as a union of staircase paths between the consecutive vertex points of polytopes $A$ and $B$.}
\label{fig:divide}
\end{figure}
\begin{figure}[b
]  
\captionsetup{width=1\linewidth}   
\begin{subfigure}[t]{0.45\linewidth} 
\centering 
\resizebox{\linewidth}{!}{
\begin{tikzpicture}
      \foreach \a in {0,0.5,...,4}{
     \draw[help lines, line width=0.05pt,]
     (\a,0)-- (\a,2);}

  \foreach \a in {0,0.5,...,2}{
     \draw[help lines, line width=0.05pt,]
 (0,\a)-- (4,\a);
   }
  
\foreach \Point in { (1.5, 1.5)}{
    \node at \Point {$\bullet$};
}

\foreach \Point in { (1.5,0.5)}{
    \node at \Point {$\circ$};
}
\foreach \Point in { (1.5,1)}{
    \node at \Point {$\circ$};
}

\draw[blue, thick, dashed](0,0)-- (0,2);
\draw[blue, thick, dashed](0,2)-- (4,2);
\draw[blue, thick, dashed](0,0)-- (4,0);
\draw[blue, thick, dashed](4,2)-- (4,0);

\node at (0, -0.2) {$p_i$};
\node at (4, 2.2) {$p_{i+1}$};
\node at (1.7, 0.5) {$p$};
\node at (1.7, 1.5) {$p'$};

\draw[black, thick](0,0)-- (1,0);
\draw[black, thick](1,0)-- (1,0.5);
\draw[black, thick](1,0.5)-- (1.5,0.5);
\draw[blue, thick](1.5,0.5)-- (1.5,1.5);
\end{tikzpicture} 
} 
\caption{At $p$ with $\lab(p)=b$, move North as much as possible to point $p'$ such that $p' \in A$ and $p'-(0,1) \in B$. Label $p'$ as $a$, and all other points between $p$ and $p'$ as $b$.}
\label{fig:subfig_2}
\end{subfigure}
\hfill %
\begin{subfigure}[t]{0.45\linewidth}
\centering
\resizebox{\linewidth}{!}{
\begin{tikzpicture}
      \foreach \a in {0,0.5,...,4}{
     \draw[help lines, line width=0.05pt,]
     (\a,0)-- (\a,2);}

  \foreach \a in {0,0.5,...,2}{
     \draw[help lines, line width=0.05pt,]
 (0,\a)-- (4,\a);
   }
  
\foreach \Point in { (1.5, 1.5)}{
    \node at \Point {$\bullet$};
}
\foreach \Point in { (2, 1.5)}{
    \node at \Point {$\bullet$};
}

\foreach \Point in { (2.5,1.5)}{
    \node at \Point {$\circ$};
}
\foreach \Point in { (2.5,2)}{
    \node at \Point {$\bullet$};
}

\draw[blue, thick, dashed](0,0)-- (0,2);
\draw[blue, thick, dashed](0,2)-- (4,2);
\draw[blue, thick, dashed](0,0)-- (4,0);
\draw[blue, thick, dashed](4,2)-- (4,0);

\node at (0, -0.2) {$p_i$};
\node at (4, 2.2) {$p_{i+1}$};
\node at (1.25, 1.25) {$p$};
\node at (2.5, 1.25) {$p'$};

\draw[black, thick](0,0)-- (1,0);
\draw[black, thick](1,0)-- (1,0.5);
\draw[black, thick](1,0.5)-- (1.5,0.5);
\draw[black, thick](1.5,0.5)-- (1.5,1.5);
\draw[blue, thick](1.5,1.5)-- (2.5,1.5);
\end{tikzpicture}
}
\caption{At $p$ with $\lab(p) =a$, move East as little possible to the first point $p'$ such that $p' \in B$, $p' - (1,0) \in A$, and $p' + (0,r) \in A$ for some $r > 0$. Label $p'$ as $b$ and all other points between $p$ and $p'$ as $a$.
}
\label{fig:subfig_3}
\end{subfigure} 
\hfill %
\begin{subfigure}[t]{0.45\linewidth}
\centering
\resizebox{\linewidth}{!}{
\begin{tikzpicture}
      \foreach \a in {0,0.5,...,4}{
     \draw[help lines, line width=0.05pt,]
     (\a,0)-- (\a,2);}
  
  \foreach \a in {0,0.5,...,2}{
     \draw[help lines, line width=0.05pt,]
 (0,\a)-- (4,\a);
   }
  
\foreach \Point in { (2.5, 2)}{
    \node at \Point {$\bullet$};
}

\foreach \Point in { (2.5,1.5)}{
    \node at \Point {$\circ$};
}

\foreach \Point in { (4,2)}{
    \node at \Point {$\bullet$};
}

\draw[blue, thick, dashed](0,0)-- (0,2);
\draw[blue, thick, dashed](0,2)-- (4,2);
\draw[blue, thick, dashed](0,0)-- (4,0);
\draw[blue, thick, dashed](4,2)-- (4,0);

\node at (0, -0.2) {$p_i$};
\node at (4, 2.2) {$p_{i+1}$};
\node at (2.7, 2.2) {$p$};

\draw[black, thick](0,0)-- (1,0);
\draw[black, thick](1,0)-- (1,0.5);
\draw[black, thick](1,0.5)-- (1.5,0.5);
\draw[black, thick](1.5,0.5)-- (1.5,1.5);
\draw[black, thick](1.5,1.5)-- (2,1.5);
\draw[black, thick](2.5,1.5)-- (2,1.5);
\draw[black, thick](2.5,1.5)-- (2.5,2);
\draw[blue, thick](2.5,2)-- (4,2);
\end{tikzpicture}  
}
\caption{At the boundary follow the unique line segment to point $p_{i+1}$. Give all points between $p$ and $p_{i+1}$ the same label as that of $p$.}
\label{fig:subfig_4}
\end{subfigure} 
\hfill
\begin{subfigure}[t]{0.45\linewidth}
\centering
\resizebox{\linewidth}{!}{
\begin{tikzpicture}
      \foreach \a in {0,0.5,...,4}{
     \draw[help lines, line width=0.05pt,]
     (\a,0)-- (\a,2);}
  
  \foreach \a in {0,0.5,...,2}{
     \draw[help lines, line width=0.05pt,]
 (0,\a)-- (4,\a);
   }

\draw[blue, thick, dashed](0,0)-- (0,2);
\draw[blue, thick, dashed](0,2)-- (4,2);
\draw[blue, thick, dashed](0,0)-- (4,0);
\draw[blue, thick, dashed](4,2)-- (4,0);

\node at (0, -0.2) {$p_i$};
\node at (4, 2.2) {$p_{i+1}$};

%top path
\draw[black, thick](0,0)-- (1,0);
\draw[black, thick](1,0)-- (1,0.5);
\draw[black, thick](1,0.5)-- (1.5,0.5);
\draw[black, thick](1.5,0.5)-- (1.5,1.5);
\draw[black, thick](1.5,1.5)-- (2,1.5);
\draw[black, thick](2.5,1.5)-- (2,1.5);
\draw[black, thick](2.5,1.5)-- (2.5,2);
\draw[black, thick](2.5,2)-- (4,2);

%bottom path
\draw[red!60, thick](0,0)-- (1.5,0);
\draw[red!60, thick](1.5,0)-- (1.5,1);
\draw[red!60, thick](1.5,1)-- (3,1);
\draw[red!60, thick](3,1)-- (3,1.5);
\draw[red!60, thick](3,1.5)-- (4,1.5);
\draw[red!60, thick](4,1.5)-- (4,2);

\end{tikzpicture}  
}
\caption{In the proof of Theorem \ref{thm:min.max}, the function $\lab_i$ constructed together with points on the original path $\gamma$ (in red) are used to show that each call to \textsf{MaxSegment} has well-defined output.}
\label{fig:subfig_5}
\end{subfigure} 
\caption{Procedure \textsf{MaxSegment} in pictures.}
\label{fig:conquer}
\end{figure} 

\begin{proof}
Let $\gamma$ be any path with signature $(A,B)$, and let $\gamma(i)$ be its subpath from $p_i$ to $p_{i+1}$. Let $\alpha$ and $\beta$ denote the respective $a$- and $b$-heights of $\gamma$.  For each $i = 1, \dots, M-1$, we shall show by induction on the number of iterations of \textsf{MaxSegment} that after each iteration with input $p, p_{i+1}$:
\begin{enumerate}
  \item[(i)] Point $p'$ is well-defined and $[p,p']$ is either a North or East segment.
  \item[(ii)] For each point $q \in (p,p'] \setminus \mathcal{V}$
\begin{align}
\lab(q)=a \;\;\;&\Rightarrow \;\;\;\alpha_{x(q)} \leq y(q) \mbox{ and } q \in A \label{eqn:alpha.q}, \quad \mbox{and}  \\
\lab(q)=b  \;\;\;&\Rightarrow \;\;\;\beta_{y(q)} \geq x(q) \mbox{ and } q \in B. \label{eqn:beta.q}
\end{align}
\end{enumerate}
Once this statement has been established, the remaining steps of the proof proceed as follows. Condition (i) implies that \textsf{Maxpath} terminates in finitely many step; thus $\overline{\gamma}(i)$ is well-defined. By construction, $\overline{\gamma}(i)$ is a union of North and East segments, and so a staircase path from $p_i$ to $p_{i+1}$. Thus, $\overline{\gamma} = \bigcup_{i=1}^m \overline{\gamma}(i)$ is a staircase path from $(0,0)$ to $(\ell_a,\ell_b)$. Note that \texttt{Maxpath} does not change the labels of points in $\mathcal{V}$, and thus the \textsf{label} function output by \texttt{Maxpath} is well-defined and extends the definition on the vertices $\mathcal{V}$ to the entire of $\overline{\gamma}$. Let 
$$\overline{\alpha} = \{q \in \overline{\gamma}: \lab(q) = a\}, \quad \overline{\beta} = \{q \in \overline{\gamma}: \lab(q) = b\}.$$
Since $\overline{\gamma}$ is a staircase path, by construction of $\lab$, $\overline{\alpha}$ is the $a$-height of $\overline{\gamma}$, and $\overline{\beta}$ is the $b$-height of $\overline{\gamma}$. Therefore, condition (ii) implies that $\gamma \preceq \overline{\gamma}$. Furthermore, (\ref{eqn:alpha.q}) implies $\overline{\alpha} \subseteq A$, so $A(\overline{\gamma}) \subseteq A$. By definition of $\lab$ on points in $\mathcal{V}$, all vertices of $A$ are in $\overline{\alpha}$, so $A(\overline{\gamma}) \supseteq A$. Thus $A = A(\overline{\gamma})$. A similar argument applied to $B$ and $\overline{\beta}$ implies $B = B(\overline{\gamma})$. So $\overline{\gamma}$ has signature $(A,B)$, and it dominates any path $\gamma$ with signature $(A,B)$. Thus it is the desired maximal path. 

Now we prove (i) and (ii) by induction on the number of iterations of \textsf{MaxSegment}. Consider the first iteration, where $(p_i,p_{i+1},\lab(p_i),\lab(p_{i+1}))$ is the input to \textsf{MaxSegment}. Let $p = p_i$. There are three cases:
\begin{itemize}[leftmargin=*]
  \item[(I)] If $p$ shares a coordinate with $p_{i+1}$:  Then \textsf{MaxSegment} has well-defined output $p_{i+1}$, and (i) clearly holds. If $\lab(p) = a$, then $p$ is a vertex of $A$, and so $[p,p_{i+1}]$ must be an East segment. Thus $y(p) = y(p_{i+1})$ and it follows from the fact that $\gamma$ is a staircase path that for any $q \in (p,p_{i+1})$, we have $\alpha(i)_{x(q)} = y(q) = y(p)$, giving (\ref{eqn:alpha.q}). If $\lab(p) = b$, then by the same reasoning, $[p,p_{i+1}]$ is a North segment, and $\beta(i)_{y(q)} = x(q) = x(p)$ for all $q \in (p,p_{i+1})$, giving (\ref{eqn:beta.q}).
\item[(II)] If $p$ does not share a coordinate with $p_{i+1}$ and $\lab(p) = b$: The set $Y$ defined in line \ref{ln:a-set} is nonempty, since $\alpha_{x(p)} \in Y$. Thus the output $p'$ is well-defined,  and $[p,p']$ is a North segment. This proves (i). For (ii), note that $p' \in A$ is the unique point in this segment with label $a$ and since we chose $p'$ to have maximal $y$ coordinate amongst points in $Y$, we have $\alpha_{x(p')} = \alpha_{x(p)} \leq y(p')$ , giving (\ref{eqn:alpha.q}). By construction, $p \trianglelefteq  p'-(0,1) \in B$, so $q \in B$ for each $q \in (p,p')$ by convexity. Since $\gamma$ is a staircase path, $\beta_{y(q)} \geq \beta_{y(p)} \geq x(p) = x(q)$ for all points  $q \in (p,p')$, proving (\ref{eqn:beta.q}).
  \item[(III)] If $p$ does not share a coordinate with $p_{i+1}$ and $\lab(p) = a$: The set $X$ defined in line \ref{ln:b-set} is nonempty, since $\beta_{y(p)} \in X$. Thus the output $p'$ is well-defined, and $[p,p']$ is an East segment. This proves (i). For (ii), note that $p' \in B$ is the unique point in this segment with label $b$ and since we chose $p'$ to have minimal $x$ coordinate amongst points in $X$, we have $\beta_{y(p')} = \beta_{y(p)} \geq x(p')$, giving (\ref{eqn:beta.q}). By construction, $p \trianglelefteq  p'-(0,1) \in A$, so $q \in A$ for each $q \in (p,p')$ by convexity. If \eqref{eqn:alpha.q} did not hold for some $q \in (p,p')$, then we would have $\alpha_{x(q)} >y(q) =y(p)$. But this contradicts that $\gamma$ is a Northeast staircase path passing through the points $(x(q), \alpha_{x(q)})$ and $(\beta_{y(p)}, y(p))$, since by assumption the former lies directly North of $q$, whilst the latter lies directly to the East of $q$, since $x(q) < x(p') \leq \beta_{y(p)}$. 
\end{itemize}

Now suppose the induction hypothesis holds for the first $j$ iterations of \textsf{MaxSegment}.
Consider the $(j+1)$-th iteration. Let $(p,\lab(p))$ be the output of \textsf{MaxSegment} in the $j$-th iteration. If $p = p_{i+1}$, then we are done. Otherwise $p$ is \emph{not a vertex}. Run \textsf{MaxSegment} with input $(p,p_{i+1},\lab(p),\lab(p_{i+1}))$. Again, we have three cases.
\begin{itemize}[leftmargin=*]
\item[(I)] If  $p$ shares a coordinate with $p_{i+1}$: Then \textsf{MaxSegment} outputs $p_{i+1}$, and  (i) holds.  Let $q$ be a point in $(p,p_{i+1})$.
  
If $\lab(p) = a$, then $[p,p_{i+1}]$ is an East segment and $\lab(q) =a$. Thus $y(q) = y(p_{i+1}) \geq \alpha_{x(q)}$. Since $p_{i+1}$ is a vertex point lying on the path $\gamma$, one easily verifies that
$$ R:= \{p_{i+1}, p_{i+1}-(1,0), p_{i+1}+(0,1)\} \cap A \neq \emptyset.$$ 
Noting that $q$ lies in the convex hull of $p,r$ and$(x(q), \alpha_{x(q)})$ for each $r \in R$, we obtain $q \in A$. This proves (\ref{eqn:alpha.q}) for case (I).

If $\lab(p) = b$, then $[p,p_{i+1}]$ is a North segment, $\lab(q)=b$, and
$x(q)= x(p_{i+1}).$ As $\gamma(i)$ is a staircase path from $p_i$ to $p_{i+1}$, 
$\beta_{y(p)} \leq \beta_{y(q)} \leq x(p_{i+1})$.
Since the induction hypothesis holds at the non-vertex point $p$, we have $\beta_{y(p)} \geq x(p) = x(p_{i+1})$, giving  $x(q) = x(p_{i+1}) = \beta_{y(q)}$. Since $p_{i+1}$ is a vertex point lying on the path $\gamma$, one easily verifies
$$R:= \{p_{i+1}, p_{i+1}+(1,0), p_{i+1}-(0,1)\} \cap  B \neq \emptyset.$$
Noting that $q$ lies in the convex hull of $p,r$ and $(\beta_{y(q)}, y(q))$
%$(x(q), \alpha_{x(q)})$
for each $r \in R$, we obtain $q \in B$. This proves (\ref{eqn:beta.q}) for case (I). 
\item[(II)] If $p$ does not share a coordinate with $p_{i+1}$ and $\lab(p) = b$: Then $p$ is the last point of an East move by \textsf{MaxSegment}. Thus $p$ arises by taking the minimum of a set of the form $X$ from line \ref{ln:b-set} (for the previous value of $p$). This implies that $y(p)+1$ lies in the set $Y$ from line \ref{ln:a-set} (for the current value of $p$). Thus $p'$ is well defined and (i) holds.  Using the inductive hypothesis we have $\beta_{y(p)} \geq x(p)$ and hence $\alpha_{x(p)} \leq y(p)$. The reasoning of the base step, case (II) then applies to show that (ii) holds.
  \item[(III)] If $p$ does not share a coordinate with $p_{i+1}$ and $\lab(p) = a$: Then $p$ is the last point of a North move by \textsf{MaxSegment}. We now argue that $\beta_{y(p)} \in X$. By the inductive hypothesis, $\alpha_{x(p)} \leq y(p)$. Therefore, $\beta_{y(p)} \geq x(p)$. This last inequality must be strict, or else $p$ belongs to $\beta$, which would contradict the maximality of the $y$ coordinate of $p$ obtained in the previous step. Let $q' = (\beta_{y(p)},y(p))$ be the point on $\gamma$ at height $y(p)$. It follows from the definition of $\beta$ that $q' \in B$, $\beta_{y(p)} \leq x(p_{i+1})$, and that the path $\gamma$ continues to the North immediately after $q'$. Since $q' \triangleleft p_{i+1}$ and $p_{i+1}$ is a vertex of either $A$ or $B$, there must be at least one more East step in $\gamma$ after its North segment from $q'$. Thus $q'+(0,r) \in A$ for some $r > 0$. It remains to show that $q'-(1,0) \in A$. Let $q^- \in \alpha$ be the point with $x$-coordinate $x(q')-1$, and $q^+ \in \alpha$ be the point with $x$-coordinate $x(q')$. That is, $q^-$ is the point of $\gamma$ at which the last East move before $q'$ occurred and let $q^+ \in \alpha$ is the point of $\gamma$ at which the first East move after $q'$ occurs. It is immediate that $q'-(1,0)$ lies in the convex hull of $q^-,q^+$ and $p$, which are all points in $A$, giving $q'-(1,0) \in A$.  The reasoning of the base step, case (III) then applies to show that (ii) holds.
\end{itemize} 
This concludes the induction step, and thus establishes (i) and (ii), as desired.
\end{proof}
Dually, one may define the path of the minimal word via Corollary~\ref{lattice_class} and \texttt{Maxpath}. We denote this minimal path by $\underline{\gamma}$.
\begin{corollary}
The cardinality of the $\sim_2$ equivalence class containing $w \in W(\ell_a, \ell_b)$ is equal to the number of lattice paths bounded by $\underline{\gamma}$ and $\overline{\gamma}$.
\end{corollary}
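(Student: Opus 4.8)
The plan is to assemble the preceding structural results and then translate a statement about a lattice interval into a count of paths. By Corollary~\ref{lattice_class}, the $\sim_2$-class of $w$ is an interval of the lattice $W(\ell_a,\ell_b)$; let $\underline{w}$ and $\overline{w}$ denote its unique minimal and maximal elements, so that the class equals $\{u \in W(\ell_a,\ell_b) : \underline{w} \preceq u \preceq \overline{w}\}$. By Theorem~\ref{thm:min.max} the path of $\overline{w}$ is $\overline{\gamma}$, and by the dual construction recorded immediately before this corollary the path of $\underline{w}$ is $\underline{\gamma}$. Thus I would first fix the two bounding paths as the paths of the extremal words of the class.

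The remaining step is to pass from the word interval to a count of paths via the natural bijection $u \mapsto \gamma^u$ between $W(\ell_a,\ell_b)$ and staircase walks from $(0,0)$ to $(\ell_a,\ell_b)$ (Definition~\ref{defn:gamma}, Lemma~\ref{lem:w.from.a}). This bijection carries $\preceq$ to the nesting order on paths: by the $\alpha$-height characterisation, $u \preceq u'$ precisely when $\alpha^u_k \le \alpha^{u'}_k$ for all $k$. Hence $\underline{w} \preceq u \preceq \overline{w}$ if and only if $\alpha^{\underline{w}}_k \le \alpha^u_k \le \alpha^{\overline{w}}_k$ for every $k$, i.e. if and only if $\gamma^u$ lies weakly between $\underline{\gamma}$ and $\overline{\gamma}$ --- equivalently, $\gamma^u$ is one of the lattice paths bounded by $\underline{\gamma}$ and $\overline{\gamma}$. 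Since $u \mapsto \gamma^u$ is a bijection and distinct words yield distinct paths, the number of words in the class equals the number of such bounded paths, which is the assertion.

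Because the hard work is already carried by Corollary~\ref{lattice_class} and Theorem~\ref{thm:min.max}, no substantial obstacle remains; the only point demanding attention is orientation. One must check that the interval $[\underline{w},\overline{w}]$ maps to paths sandwiched \emph{two-sidedly} between $\underline{\gamma}$ and $\overline{\gamma}$, rather than to a one-sided family. Working through the $\alpha$-height inequalities makes both bounds appear on an equal footing, so this becomes immediate once the order-compatibility of the bijection is spelled out.
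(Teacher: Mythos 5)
Your proposal is correct and follows exactly the argument the paper intends: the class is an interval $[\underline{w},\overline{w}]$ by Corollary~\ref{lattice_class}, its extremal paths are $\underline{\gamma}$ and $\overline{\gamma}$ by Theorem~\ref{thm:min.max} and duality, and the order-preserving bijection $u \mapsto \gamma^u$ converts the interval into the set of paths bounded by these two. The paper states this corollary without proof precisely because this is the evident chain of reasoning, so your write-up simply makes the implicit argument explicit.
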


\begin{example}\label{ex:min.max}
Consider the word 
$$w = baabbaabbabaabaaababaaba.$$ 
Figure \ref{fig:minmax}(A) shows $A(w)$ and its vertices in blue, and $B(w)$ and its vertices in orange. The boxes between the points $p_i$ and $p_{i+1}$ are shown in grey. Figure \ref{fig:minmax}(B) and \ref{fig:minmax}(C) show the paths $\underline{\gamma}$ and $\overline{\gamma}$, respectively, in black. Figure \ref{fig:minmax}(D) shows $\underline{\gamma}$ in black and $\overline{\gamma}$ in red. The minimal and maximal words are
$$ \underline{w} = baababababbaaabaababaaba, \quad \overline{w} = baabbabababaabaabaabaaba,$$ and one can easily see that the equivalence class of $w$ has size $2^5 = 32$. 
\end{example}
\begin{figure}[h!]
\begin{subfigure}[t]{0.45\linewidth} 
\centering 
\resizebox{\linewidth}{!}{\includegraphics{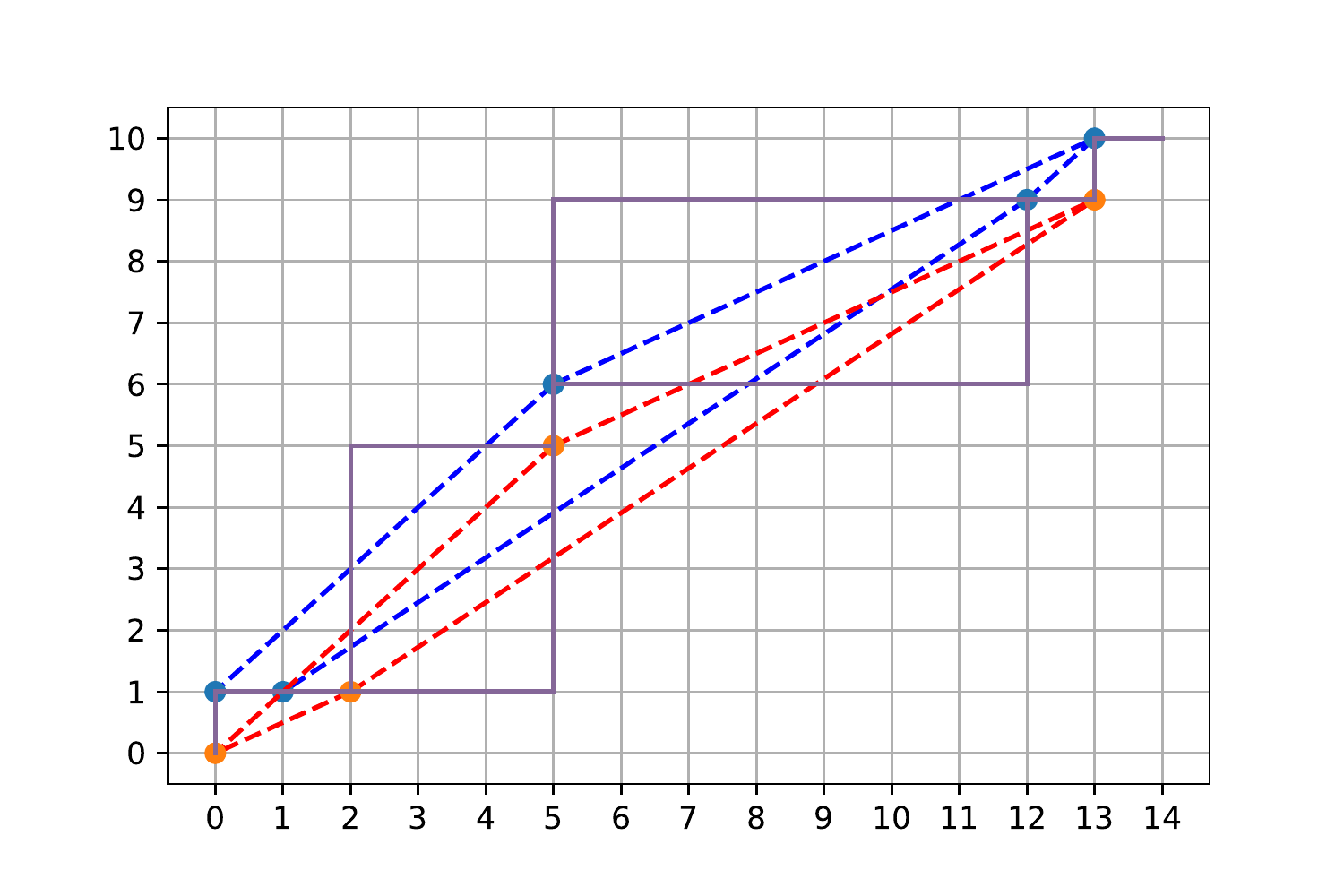}}
\caption{}
\end{subfigure}
\begin{subfigure}[t]{0.45\linewidth} 
\centering 
\resizebox{\linewidth}{!}{\includegraphics{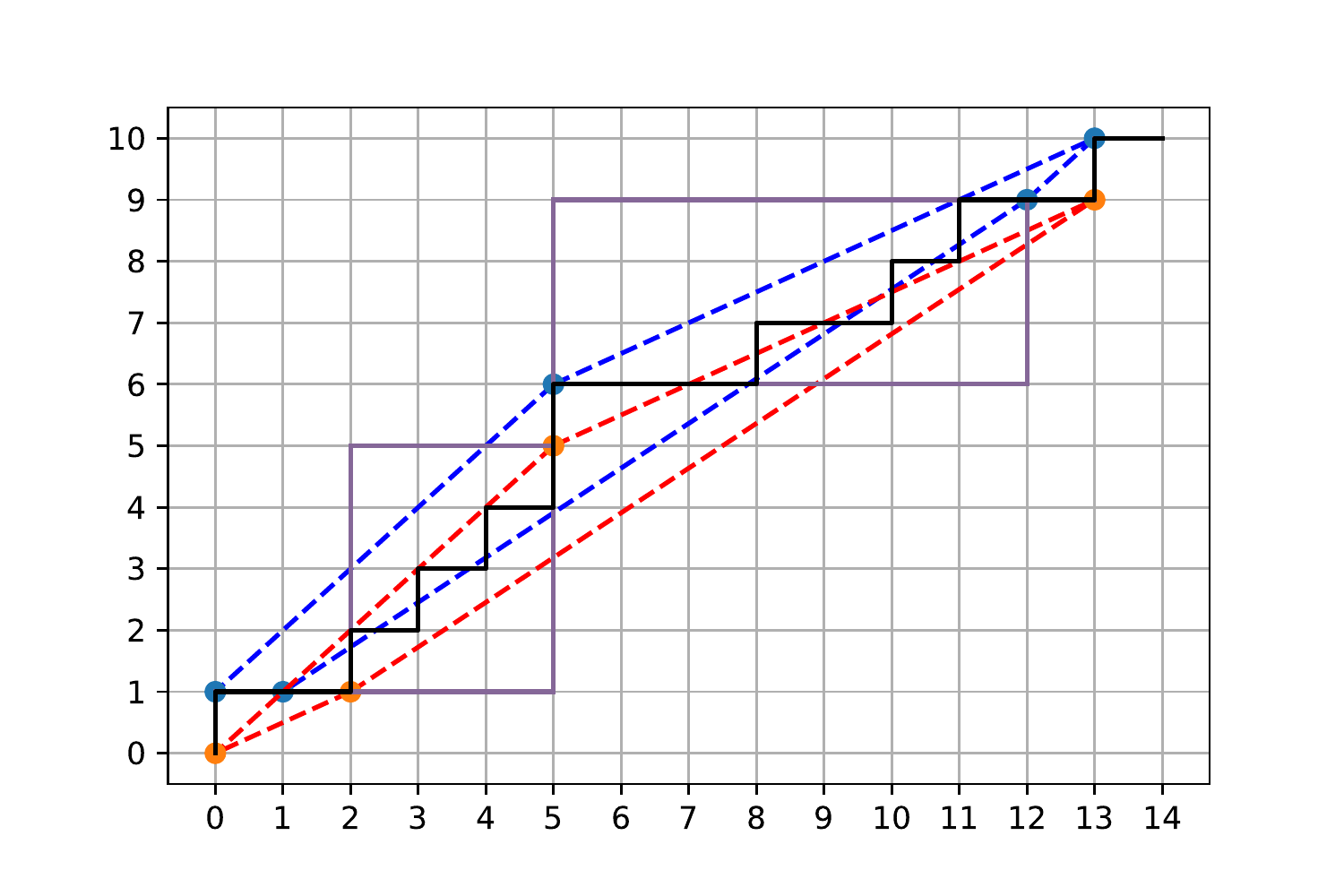}}
\caption{}
\end{subfigure}
\begin{subfigure}[t]{0.45\linewidth} 
\centering 
\resizebox{\linewidth}{!}{\includegraphics{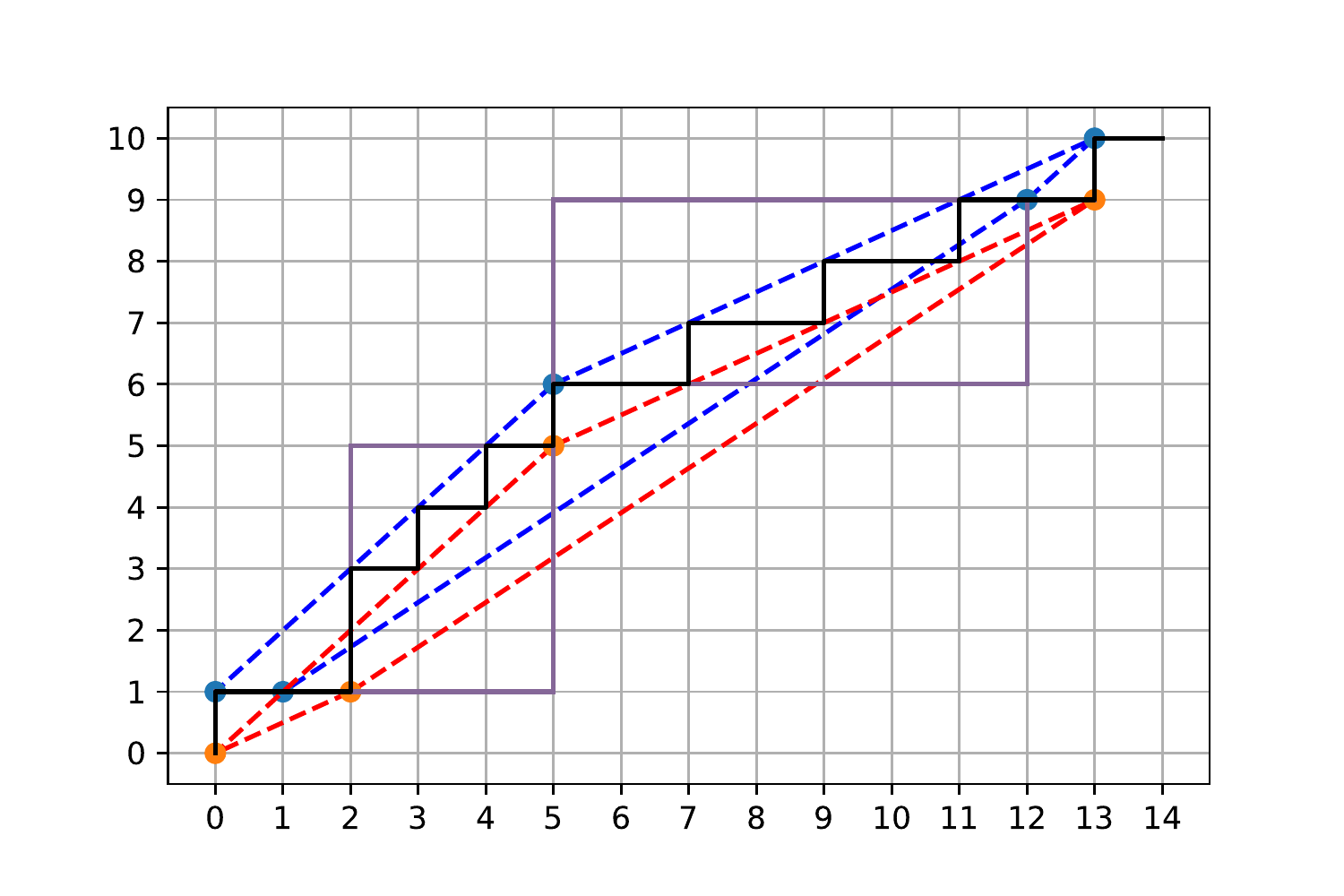}}
\caption{}
\end{subfigure}
\begin{subfigure}[t]{0.45\linewidth} 
\centering 
\resizebox{\linewidth}{!}{\includegraphics{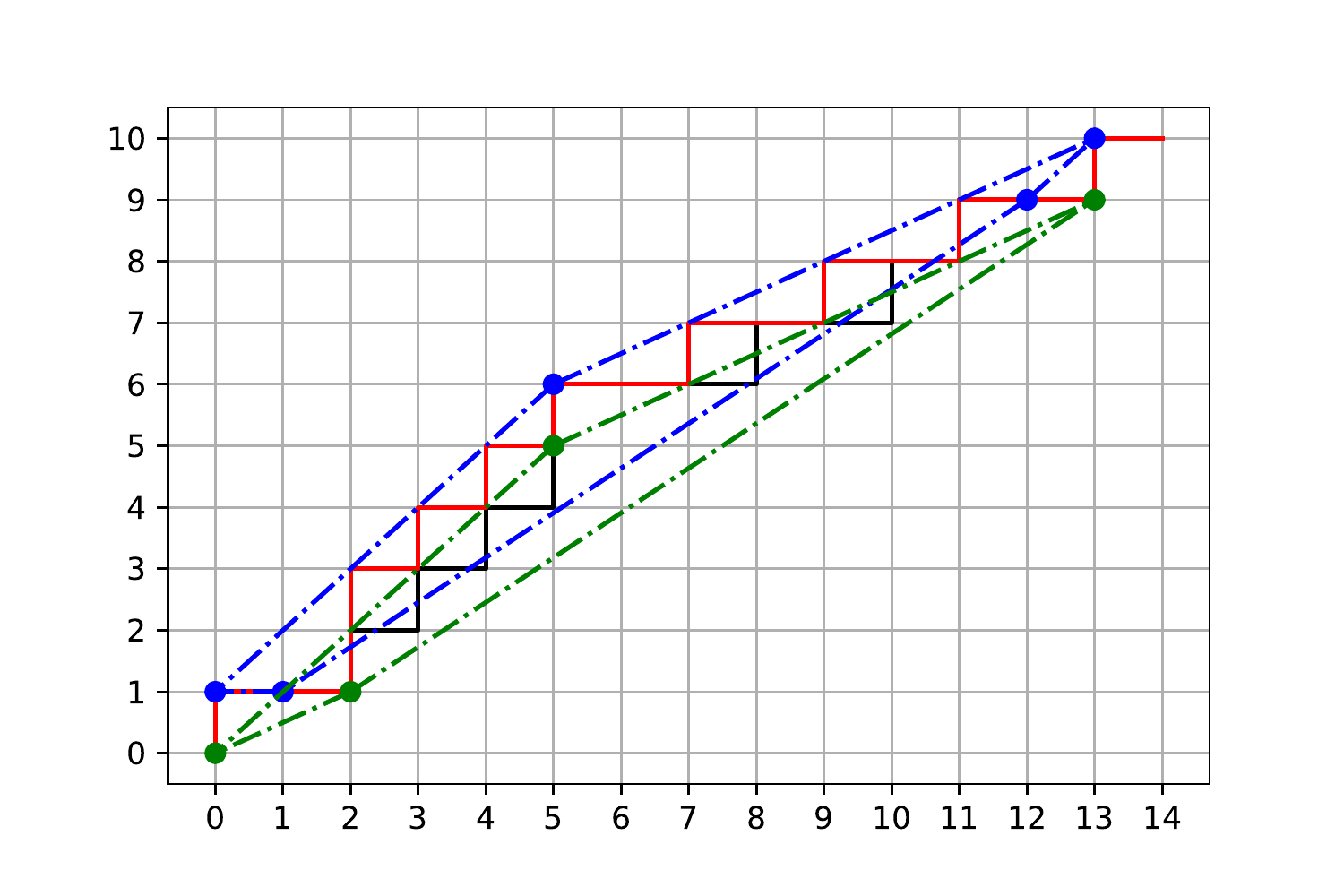}}
\caption{}
\end{subfigure}
\caption{Computing the minimal and maximal elements of the word  in Example \ref{ex:min.max} }
%(A) Signature $(A,B)$ and their vertices. (B) to (D): the paths $\underline{\gamma}$ and $\overline{\gamma}$, shown individually and then collectively.}
\label{fig:minmax}
\end{figure}

\begin{example}[The Generalized Catalan family]\label{ex:catalan}
For integers $r,k \geq 2$, define the word $w(r,k) \in W(r+k+1,r+k+1)$ by
$$ w(r,k) = (ab^k)\,(ab)^r\,(a^kb). $$
We call the $\sim_2$-equivalence class of $w(r,k)$ the generalised Catalan family $C(r,k)$. Notice that in this case, $A:=A(w(r,k))$ is a trapezium with vertices 
$$\mathcal{V}(A) = \{(0,0),(1,k),(r+1,r+k),(r+k,r+k)\},$$
while $B := B(w(r,k))$ is another trapezium with vertices
$$\mathcal{V}(B) = \{(1,0),(1,k-1),(r+1,r+k-1),(r+k+1,r+k)\}.$$
The pair of lines constructed by extending the parallel edges of $B$ are the shift by $(1,0)$ of the corresponding lines constructed from the parallel edges of $A$. The path of each word in $C(r,k)$ is thus constrained to pass through each point of $\mathcal{V}(A) \cup \mathcal{V}(B) \cup \{(2, k), (r+k+1, r+k+1)\}$, whilst remaining within the union of the two trapeze. By Theorem \ref{thm:min.max}, $w(r,k)$ is the maximal element of this equivalence class. The minimal word is given by $(ab^k)\,a^rb^r\,(a^kb)$ when $r<k$ and $(ab^k)\,a^k(ba)^{r-k}b^k\,(a^kb)$ when $r \geq k$. Figure~\ref{fig:w54} illustrates the paths of the two words and the corresponding signature for the family $C(5,4)$.

\begin{figure}[h]\captionsetup{width=1\linewidth}
\includegraphics[width=0.7\textwidth]{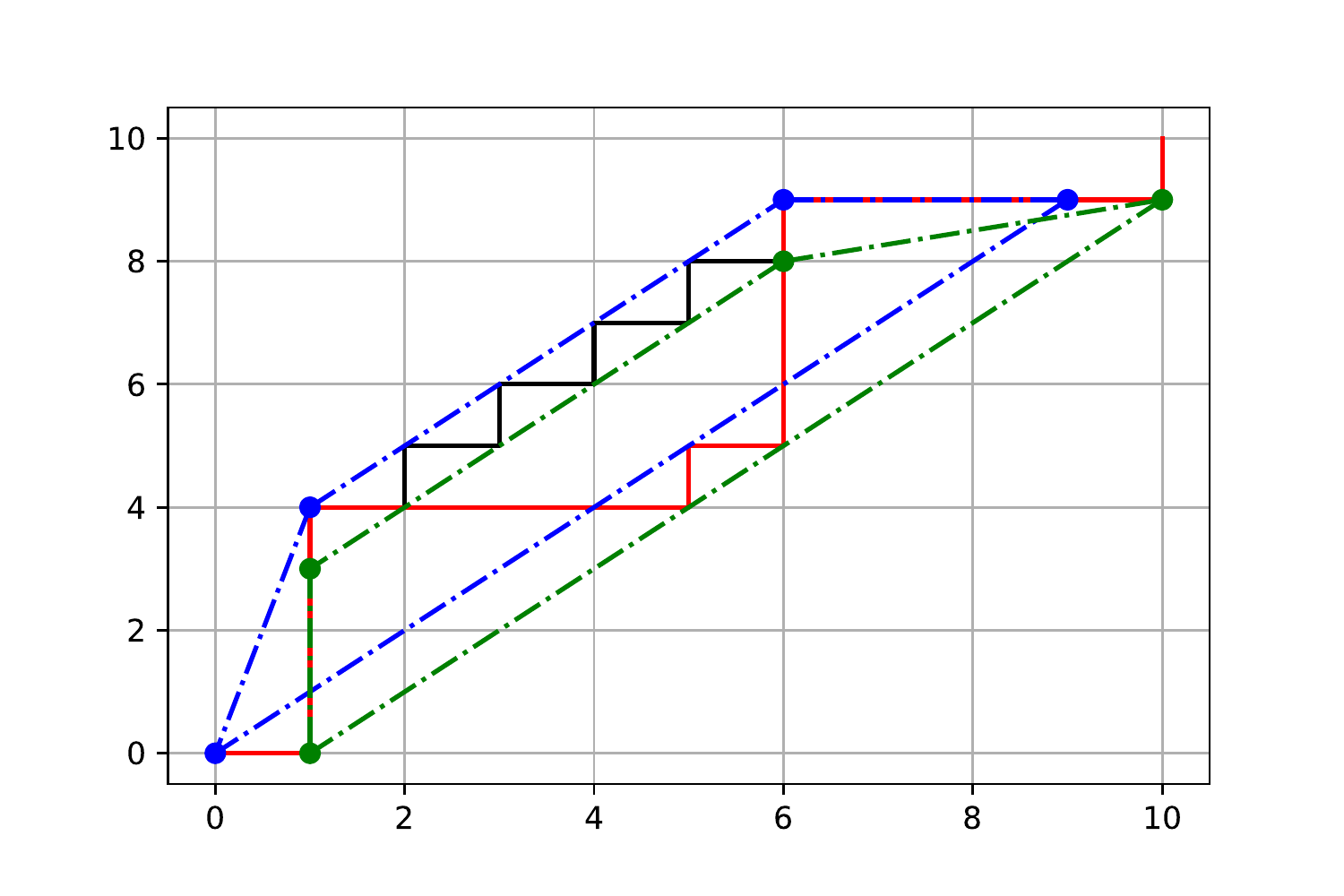}
\caption{The degree one signature of $w(5,4)$ from Example \ref{ex:catalan}, with maximal and minimal words shown. The size of the equivalence class is the number of Dyck paths of length $8$ with height at most $4$.}\label{fig:w54}
\end{figure}

The size of the equivalence class $C(r,k)$ is equal to the number of staircase paths from $(0,0)$ to $(r,r)$ weakly bounded by the lines $y=x$ and $y=x-k$, or in other words, the number of Dyck paths of length $2r$ with height at most $k$. A formula for the size of this equivalence class is therefore provided by \cite[Theorem 10.3.4]{Krattenthaler}:
$$|C(r,k)| = \frac{2^{2r+2}}{k+2} \sum_{j=1}^{\lfloor\frac{k+1}{2}\rfloor} \cos^{2r} \left(\frac{j\pi}{k+2}\right) \sin^{2}\left(\frac{j\pi}{k+2}\right).$$
For fixed $k$ and $r \to \infty$, the first order term in $|C(r,k)|$ is
$$ |C(r,k)| \sim O\left(4^r\cos^{2r}\left(\frac{\pi}{k+2}\right)\right) \sim O\left(2^{\ell}\cos^{{\ell}}\left(\frac{\pi}{k+2}\right)\right), $$
where $\ell = 2(r+k+1)$ is the word length. That is, this equivalence class grows exponentially fast in the word length. Here are some asymptotics for low values of $k$.

\begin{table}[h]
\begin{tabular}{cccccccccc}
$k$ & 2 & 3 & 4 & 5 & 6 & 7 & 8 & 9 & 10 \\ \hline
$|C(r,k)| \sim $& $2^r$ & $2.61^r$ & $3^r$ & $3.24^r$ & $3.41^r$ & $3.53^r$ & $3.61^r$ & $3.68^r$ & $3.73^r$
\end{tabular}
\end{table}
\end{example}

\section{Algorithms}
\label{sec:alg}

\subsection{Algorithms for the general case}
The \texttt{Signature} algorithm computes the degree-$d$ signature of a word. It underpins the first step in all algorithms on a general alphabet. 
To compute the degree $d$ signature of a word from definition, we need to compute the support of the polynomials $g_u^w$ defined in (\ref{eqn:gu.w}) for all $u \in \Sigma^d$. This means for each $u \in \Sigma^d$, one needs to list the ways that $u$ can appear as a scattered subword of $w$, and in each case, for each letter $s \in \Sigma$, count the number of occurrences of $s$ within each intermediary factor. To improve efficiency, instead of computing $g_u^w$ for one $u$ at a time, the \texttt{Signature} algorithm computes the monomial $\odot_{s\in\Sigma} \odot_{k=1}^{d} x(s,k)^{N_s^w(\pi_{k-1},\pi_k)}$ for each $\pi$ in (\ref{eqn:index.i}), then uses a lookup table indexed by elements of $\Sigma^d$ to associate this monomial with the correct polynomial $g_u^w$.

\begin{lemma}\label{lem:signature.complexity}
$\mathtt{Signature}(w,d)$ outputs the degree-d signature of $w$, and has complexity $O(m^d{\ell \choose d}^2)$, where $\ell = |w|$. When $m = 2$ and $d = 1$, it has complexity $O(\ell)$. 
\end{lemma}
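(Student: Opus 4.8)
The plan is to split the argument into a correctness claim and a complexity analysis, the latter organised around the two phases of the algorithm: (a) enumerating the scattered-subword occurrences and accumulating the support sets, and (b) turning each accumulated support set into its Newton polytope.

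For \textbf{correctness}, I would argue directly from the defining formulas \eqref{eqn:gu.w} and \eqref{eqn:index.i}. Each increasing index sequence $\pi=(\pi_1<\dots<\pi_d)$ in $\{1,\dots,\ell\}$ selects exactly one scattered occurrence of the subword $u:=w_{\pi_1}\cdots w_{\pi_d}$, and its associated monomial has exponent vector $\bigl(N_s^w(\pi_{k-1},\pi_k)\bigr)_{s\in\Sigma,\,1\le k\le d}$, i.e. the content of the intermediary factors. Hence routing this monomial to the dictionary entry keyed by $u$ reproduces, over all $\pi$, precisely the support of $g_u^w$ (empty supports correspond to absent keys, consistent with $g_u^w\equiv-\infty$ when $I=\emptyset$). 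Taking convex hulls then yields $\Newt(g_u^w)$ for every $u\in\Sigma^d$, which is the $d$-signature; by Lemma~\ref{lem:newton} these polytopes also determine the reduced polynomials $[g_u^w]$.

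For the \textbf{complexity of phase (a)}, I would precompute prefix content counts of $w$ in $O(m\ell)$ time, so that the content of any factor, and hence the full exponent vector of a given $\pi$, is obtained in $O(md)$ time. There are $\binom{\ell}{d}$ sequences $\pi$, and inserting each resulting monomial under its length-$d$ key costs $O(d)$, so phase (a) runs in $O\!\left(md\binom{\ell}{d}\right)$. The \textbf{complexity of phase (b)} is where the quadratic factor enters: there are $m^d$ keys, and since the $\binom{\ell}{d}$ occurrences are distributed among them, each support set has at most $\binom{\ell}{d}$ points; bounding the cost of a convex hull of $N$ points by $O(N^2)$ gives at most $O\!\left(\binom{\ell}{d}^2\right)$ per polytope and $O\!\left(m^d\binom{\ell}{d}^2\right)$ overall. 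As phase (b) dominates phase (a), this yields the claimed bound.

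For the \textbf{special case} $m=2$, $d=1$, the idea is to bypass the generic quadratic hull computation entirely. By Lemma~\ref{lem:support} the support of $g_i^w$ is the $a_i$-height $\gamma^{w,a_i}$, a monotone staircase that is read off in a single left-to-right pass over $w$ in $O(\ell)$ time; since these points are already sorted by their first coordinate, their planar convex hull is produced by one monotone-chain scan, again in $O(\ell)$, so the whole computation is $O(\ell)$. The \emph{main obstacle} is precisely this geometric phase: in the general case the support lives in $\R^{md}$, where convex-hull complexity is delicate, so the clean bound depends on charging each per-polytope hull as quadratic in its number of points; and the sharp $O(\ell)$ bound for two letters in degree one rests on recognising the monotone structure of $\gamma^{w,a_i}$, which lets a single linear scan replace any general hull routine.
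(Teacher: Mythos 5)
Your proposal is correct and follows essentially the same route as the paper's proof: correctness via the correspondence between index sequences $\pi$ and dictionary entries keyed by scattered subwords, complexity split into an $O\!\left(md\binom{\ell}{d}\right)$ enumeration phase plus an $O\!\left(m^d\binom{\ell}{d}^2\right)$ convex-hull phase using the quadratic bound per hull, and the $O(\ell)$ special case for $m=2$, $d=1$ via the already-sorted staircase points and Andrew's monotone chain algorithm. The only cosmetic difference is that you invoke Lemma~\ref{lem:support} explicitly for the two-letter case, which the paper leaves implicit.
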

\begin{proof}
 From (\ref{eqn:gu.w}), it is straightforward to verify that $g_u^w = L[u]$ if and only if $u$ appears as a scattered subword of $w$. This proves the algorithm's correctness. Now we analyse its complexity. The calculation in Lines 2 and 3 requires $m\ell$ counts, and thus has complexity $O(m\ell)$. Lines 10-13 are executed within three loops, and thus has complexity $O(md{\ell \choose d})$. The last loop computes the convex hull of the support of $g_u^w$ for each $u \in \Sigma^d$. There are $m^d$ such polytopes to compute in the signature (some of which can be empty). 
Each polytope is the convex hull of at most $O({\ell \choose d})$ points, so in the worst case the complexity of computing their convex hull is $O({\ell \choose d}^2)$. Thus, overall, the complexity of the algorithm is 
$O(md{\ell \choose d}) + O(m^d{\ell \choose d}^2) = O(m^d{\ell \choose d}^2)$. For $m = 2$ and $d = 1$, the complexity of computing the support of each polynomial is $O(\ell)$. As these points are naturally sorted, the two polygons in the signature can be computed in linear time using Andrew's monotone chain algorithm \cite{andrew1979efficient}. Thus, the case $m=2,d=1$ has complexity $O(\ell)$.
\end{proof}

\begin{proof}[\textbf{Proof of Theorem \ref{thm:algorithm} for $\mathbf{m > 2}$}]
Fix $n$. Problem \texttt{CheckPair} is solved by Algorithm \ref{alg:identity.general}, which calls $\mathtt{Signature}(w,d)$ for $d = 1, \dots, n-1$. Thus it has complexity of the worst-case, which is that of $\mathtt{Signature}(w,n-1)$. 
Problem \texttt{ListAll} is solved by Algorithm \ref{alg:list.identities.general}, which first computes the $\ut{n}$ signatures for all words in $W(\ell_{a_i}, a_i \in \Sigma)$, and stores these as keys in a hash table. There are ${\ell \choose \ell_{1}, \dots, \ell_{m}}$ such  words, and by Lemma \ref{lem:signature.complexity}, computing the $\ut{n}$ signature in each case has complexity $O(m^{n-1}{\ell \choose n-1}^2)$. Therefore, the complexity is $O(m^{n-1}{\ell \choose \ell_{1}, \dots, \ell_{m}}{\ell \choose n-1}^2)$. Finally, \texttt{ListWord} is solved by Algorithm \ref{alg:equivalence.class.general}, which finds a specific entry from the lookup table created from Algorithm \ref{alg:list.identities.general}, and thus has the same complexity. 
\end{proof}

\begin{algorithm}\caption{\texttt{Signature}} \label{alg:signature}
\flushleft 
\textbf{Input}: Word $w$ on alphabet $\Sigma=\{a_1, \ldots, a_m\}$, positive integer $d$ \\
\textbf{Output}: Degree-$d$ signature of $w$
\begin{algorithmic}[1]
  \State $L \gets $ empty dictionary
  \State $c(0,s) \gets 0$ for all $1 \leq s \leq m$
  \For{$1 \leq i \leq |w|$}
  \State $c(i,s) \gets$number of occurrences of letter $a_s$ before $i$th letter of $w$
  \EndFor
  \For{$\pi \in [{\ell \choose d}]$}
    \State $u \gets (w_{\pi_1}, \dots, w_{\pi_{d}})$ , $u_0\gets 0$, $h \gets \underline{0} \in \mathbb{R}^{md}$
    \For{$1 \leq s \leq m$}
      \For{$k \in \{1, \dots, d\}$}
      \If{$u_{k-1}=a_s$}  \State $N_{s}^w(\pi_{k-1},\pi_k) \gets c(\pi_k,s) - c(\pi_{k-1},s) -1$ 
      \Else \State $N_{s}^w(\pi_{k-1},\pi_k) \gets c(\pi_k,s) - c(\pi_{k-1},s)$
      \EndIf
      \EndFor
  \State     $h \gets h+ N_s^w(\pi_{k-1}, \pi_k)\mathbf{e}_{(k-1)m+s}$
    \EndFor
         \If{$u \notin keys(L)$} $L[u] \gets \{h\}$ \textbf{else }{$L[u] \gets L[u] \cup \{h\}$}
    \EndIf
  \EndFor
\For{$u \in \Sigma^d$ in lexicographic order}
\State $P_u^w \gets$ convex hull of $L[u]$
\EndFor  
  \State \textbf{return} $(P_u^w, u \in \Sigma^d)$ 
\end{algorithmic}
\end{algorithm}
%\vskip-12pt
\begin{algorithm}
\caption{\texttt{Identity}} \label{alg:identity.general}
\flushleft 
\textbf{Input}: words $w,w'$, integer $n \geq 2$ \\
\textbf{Output}: True iff $w \sim_n w'$
\begin{algorithmic}[1]
\For{$d = 1, \dots, n-1$}
  \If{$\mathtt{Signature}(w,d) \neq \mathtt{Signature}(w',d)$} \textbf{return } False
  \EndIf
\EndFor
\State \textbf{return } True
\end{algorithmic}
\end{algorithm}
%\vskip-12pt
\begin{algorithm}
\caption{\texttt{ListIdentitiesGeneral}} \label{alg:list.identities.general}
\flushleft 
\textbf{Input}: $n \geq 2, c \in \mathbb{N}_{\geq 0}^{|\Sigma|} $ \\
\textbf{Output}: All $\ut{n}$ identities for words $w \in W(c)$.
\begin{algorithmic}[1]
\State $L \gets$ empty dictionary
\For{$w \in W(c)$}
  \State $V^w \gets (\mathtt{Signature}(w,1), \dots, \mathtt{Signature}(w,n-1))$
  \State $L[V^w] \pluseq w$
\EndFor
\State \textbf{return} $L$
\end{algorithmic}
\end{algorithm}

\begin{algorithm}
\caption{\texttt{EquivalenceClassGeneral}} \label{alg:equivalence.class.general}
\flushleft 
\textbf{Input}: $n \geq 2$, $\Sigma$ an alphabet of size $m$, $w \in \Sigma^+$ \\
\textbf{Output}: Equivalence class of $w$ in $\ut{n}$
\begin{algorithmic}[1]
\State $c = (|w|_{a_i}, a_i \in \Sigma)$
\State $V^w \gets (\mathtt{Signature}(w,1), \dots, \mathtt{Signature}(w,n-1))$
\State $L = \mathtt{ListIdentitiesGeneral}(n,c)$
\State \textbf{return} $L[V^w]$
\end{algorithmic}
\end{algorithm}

%\vspace{0.2in}
\subsection{Two-letter alphabets}
For a two-letter alphabet and $n = 2$, there is a natural speedup of the equivalence class enumeration algorithm. In particular, thanks to the Structural Theorem, to represent an equivalence class it is sufficient to compute the minimal and maximal element, which can be done very efficiently. This leads to an optimal algorithm for equivalence class computation for $\ut{2}$, and a very efficient algorithm for identity enumeration. As $\ut{n}$ identities must also be $\ut{n-1}$ identities, this also gives a shortcut for these problems for $\ut{n}$ over two-letter alphabets.  

\begin{algorithm}
\caption{\texttt{Minmax}} \label{alg:min.max}
\flushleft 
\textbf{Input}: word $w \in W(\ell_a,\ell_b)$ \\
\textbf{Output}: minimal and maximal words $(\underline{w},\overline{w})$ of the equivalence class of $w$ in $\ut{2}$
\begin{algorithmic}[1]
\State $\overline{w} \gets \mathtt{Maxword}(w)$
\State $\underline{w} \gets \mathtt{Maxword}(\tilde{w})$
\State \textbf{return} $(\underline{w},\overline{w})$. 
\Procedure{Maxword}{$w$}
\State $(A,B) \gets \mathtt{Signature}(w,1)$
\State $\mathcal{V} \gets \mathsf{vertex}(A) \cup \mathsf{vertex}(B) \cup (\ell_a,\ell_b)$, sorted in increasing $\trianglelefteq$ and labelled.
\For{$p_i,p_{i+1} \in \mathcal{V}$}
  \State $\overline{\gamma}(i) \gets \mathtt{MaxPath}(p_i,p_{i+1},\lab(p_i),\lab(p_{i+1}))$
\EndFor
\State Compute $\overline{w}$ from $\overline{\gamma}$ via (\ref{eqn:w.from.a})
\State \textbf{return} $\overline{w}$. 
\EndProcedure
\end{algorithmic}
\end{algorithm}

\begin{algorithm}
\caption{\texttt{EquivalenceClass}} \label{alg:equivalence.class}
\flushleft 
\textbf{Input}: word $w \in W(\ell_a,\ell_b)$, $n \geq 3$ \\
\textbf{Output}: equivalence class of $w$ in $\ut{n'}$ for all $2 \leq n' \leq n$
\begin{algorithmic}[1]
\State $(\underline{w},\overline{w}) \gets \mathtt{Minmax}(w)$
\State $E_1 \gets \{v \in W(\ell_a,\ell_b): \underline{w} \preceq v \preceq \overline{w}\}$. 
\For{$n'$ \textbf{from} $3$ \textbf{to} $n$}
  \State $T \gets \mathtt{Signature}(w,n'-1)$.
  \State $E_{n'-1} \gets \emptyset$
  \For{$v \in E_{n'-2}$}
    \State $S(v) \gets \mathtt{Signature}(v,n' - 1)$
    \If{$S(v) = T$} $E_{n'-1} \pluseq v$
    \EndIf
  \EndFor
\EndFor
\State \textbf{return} $E_n$. 
\end{algorithmic}
\end{algorithm}

\begin{algorithm}
\caption{\texttt{ListIdentities2}} \label{alg:list.identities2}
\flushleft 
\textbf{Input}: $\ell_a,\ell_b \in \mathbb{N}$ \\
\textbf{Output}: All $\ut{2}$ identities for
all words in $W(\ell_a,\ell_b)$
\begin{algorithmic}[1]
\State $L \gets$ empty dictionary
\State $wlist = [a^{\ell_a}b^{\ell_b}]$
\While{$wlist \neq \emptyset$}
  \State $wlist' = \emptyset$
  \For{$w \in wlist$}
    \State $(\underline{w},\overline{w}) \gets \mathtt{Minmax}(w)$
    \If{$\underline{w} \notin keys(L)$}
      \State $L[\underline{w}] \gets (\underline{w},\overline{w})$
      \State $wlist' \gets wlist' \cup \{w': w' \leftrightarrow \underline{w}, w' \succ \underline{w}\}$.
    \EndIf
  \EndFor  
  \State $wlist \gets wlist'$
\EndWhile
\State \textbf{return } $L$
\end{algorithmic}
\end{algorithm}

\begin{lemma}\label{lem:min.max.complexity}
Algorithm \ref{alg:min.max} has complexity $O(\ell)$.
\end{lemma}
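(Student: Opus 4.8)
The plan is to bound each of the four components of Algorithm \ref{alg:min.max} separately and observe that the dominant cost is linear in $\ell = \ell_a + \ell_b$. Since \texttt{Minmax} simply calls the procedure \texttt{Maxword} twice, once on $w$ and once on its dual $\widetilde w$ (with $|\widetilde w| = \ell$), it suffices to show that a single call to \texttt{Maxword} runs in $O(\ell)$ time; the overall bound then follows up to a factor of two. Within \texttt{Maxword} the work splits into: (a) the call to $\mathtt{Signature}(w,1)$; (b) the construction, sorting and labelling of the vertex set $\mathcal V$; (c) the loop of calls to \texttt{MaxPath} (and hence to \textsf{MaxSegment}) of Algorithm \ref{alg:maxpath}; and (d) the reconstruction of $\overline w$ from $\overline\gamma$ via \eqref{eqn:w.from.a}.

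For (a), Lemma \ref{lem:signature.complexity} gives that in the case $m=2$, $d=1$ the signature $(A,B)$ is computed in $O(\ell)$. For (b), I would note that $A = \conv(\alpha^w)$ and $B = \conv(\beta^w)$ have at most $\ell_a$ and $\ell_b$ vertices respectively, so $|\mathcal V| \leq \ell + 1$; moreover, since $\alpha^w$ and $\beta^w$ are monotone in the order $\trianglelefteq$, the vertices of $A$ and of $B$ are already produced in increasing order along the path, so merging them (together with $(\ell_a,\ell_b)$) into the monotone chain $\mathcal V$ and attaching the $\lab$ values costs $O(\ell)$.

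The crux is (c), and this is where the main obstacle lies: I must show that the total cost of all \textsf{MaxSegment} calls is $O(\ell)$ rather than quadratic, which requires two observations. First, each evaluation of the extremum of the sets $Y$ and $X$ in lines \ref{ln:a-set} and \ref{ln:b-set} can be made $O(1)$ after an $O(\ell)$ preprocessing step. Concretely, from the ordered vertex lists of the convex polygons $A$ and $B$ I would precompute their upper and lower envelopes, i.e.\ for each integer abscissa $x$ the ordinate interval $\{y : (x,y)\in A\}$ (and symmetrically, for each ordinate the abscissa interval), and likewise for $B$; this costs $O(\ell)$ and yields $O(1)$ slice-membership and slice-extremum queries. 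Then $Y$ in line \ref{ln:a-set} is the intersection of the explicit range $y(p) < k \le y(p_{i+1})$ with the vertical $A$-slice at $x(p)$ and the shifted vertical $B$-slice, so $\max Y$ is the minimum of three precomputed upper endpoints, computed in $O(1)$; the dual statement handles $\min X$. Second, by Theorem \ref{thm:min.max} the successive outputs of \textsf{MaxSegment} are exactly the successive North/East segments of the staircase path $\overline\gamma$, which has precisely $\ell$ unit edges. Hence these segments \emph{partition} $\overline\gamma$: the number of \textsf{MaxSegment} calls is at most $\ell$, and the interior-labelling loops ``\textbf{for} $q\in(p,p')$'' visit each of the $\ell+1$ lattice points of $\overline\gamma$ exactly once, for a total of $O(\ell)$.

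Combining these, step (c) runs in $O(\ell)$, and step (d) reads off the gaps of $\overline\gamma$ through \eqref{eqn:w.from.a} in $O(\ell)$. Summing (a)--(d) gives $O(\ell)$ per call to \texttt{Maxword}, and therefore $O(\ell)$ for \texttt{Minmax}. The only delicate point to write carefully is the amortized analysis of step (c): one must verify that the envelope preprocessing genuinely reduces each $\max Y$/$\min X$ computation to $O(1)$, and invoke the partition-into-segments consequence of Theorem \ref{thm:min.max} to cap both the number of calls and the total interior-labelling work by the path length.
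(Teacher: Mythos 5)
Your proposal is correct and takes essentially the same route as the paper's own proof: the same four-part decomposition (the $O(\ell)$ call to \texttt{Signature}, merging the already-sorted vertices into $\mathcal{V}$, constructing $\overline{\gamma}$ via coordinate slices of the polygons, and reading off $\overline{w}$), each bounded by $O(\ell)$. Your envelope-precomputation and partition-into-segments analysis is simply a more detailed justification of the paper's terser claim that computing $\overline{\gamma}$ amounts to $O(\ell)$ coordinate-slice computations of $A(w)$ and $B(w)$, and hence costs $O(\ell)$.
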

\begin{proof}
Step 1 of Procedure \texttt{Maxword} has complexity $O(\ell)$ as noted in Lemma \ref{lem:signature.complexity}. Since the vertices of $A$ and $B$ are already sorted, merging them to a labelled and sorted list $\mathcal{V}$ requires $O(\ell)$ operations. 
Computation of $\overline{\gamma}$ requires one to compute $O(\ell)$ coordinate slices of the lattice polytopes $A(w)$ and $B(w)$, and thus has complexity $O(\ell)$. Finally, reading $\overline{w}$ from $\overline{\gamma}$ requires $O(\ell)$ operations. 
\end{proof}

\begin{proof}[\textbf{Proof of Theorem \ref{thm:algorithm} for $\mathbf{m = 2}$}]
Lemma \ref{lem:signature.complexity} implies that for $n = 2$, Algorithm \ref{alg:signature} has complexity $O(\ell)$. Now consider the problem \texttt{ListWord}. Fix $n \geq 3$. For each $d=2, \ldots, n-1$  Algorithm \ref{alg:equivalence.class} computes the $d$-signature of each word in the $\sim_d$ class of $w$. Thus its complexity is $\sum_{d=2}^{n-1}O(2^d\binom{\ell}{d}^2)C_d(w)$. For $n = 2$, the equivalence class is efficiently represented by the minimal and maximal elements, which can be computed in $O(\ell)$ operations by Lemma \ref{lem:min.max.complexity}. Finally, consider problem \texttt{ListAll}. For $n = 2$, each equivalence class is represented by its min-max pair $(\underline{w},\overline{w})$ in Algorithm \ref{alg:list.identities2}. For each equivalence class, we make at most $\ell$ calls to $\texttt{MinMax}$, which by Lemma \ref{lem:min.max.complexity} has complexity $O(\ell)$. Thus each equivalence class requires at most $O(\ell^2)$ computations. So the overall complexity is $O(C_2\ell^2)$. For $n \geq 3$, the worst-case complexity we give is that of the general-alphabet algorithm. Finally, to see that the algorithms for $\ut{2}$ have optimal order, note that any algorithm would need to read in the word. Reading a word of length $\ell$ already takes $O(\ell)$ operations, and thus the first two algorithms for $\ut{2}$ are optimal. 
\end{proof}

\begin{proof}[\textbf{Proof of Theorem \ref{thm:shortest.ut3}}]
We use Algorithm \ref{alg:list.identities.general} to list all $\ut{3}$ identities of lengths 21 and 22. The algorithm verifies that there are no $\ut{3}$ identities of length 21, and returns the identities in Theorem~\ref{thm:shortest.ut3} as the only identities of length 22. 
\end{proof}

\begin{remark}
 Pastjin \cite{P06} gave an algorithm for identity verification in the bicyclic monoid (and hence in $\ut{2}$ by \cite{DJK17}) via linear programming. Given two words $w,v$, Pastjin's algorithm essentially checks for equality of the polygons $A(w)$ and $A(v)$ by recursively checking if each point $p \in A(w)$ can be separated from $A(v)$ by a hyperplane. In the case of a two letter alphabet, this na\"{i}ve convex hull computation has complexity $O(\ell^2)$, where $\ell$ is the length of the word. In comparison, our algorithm attains the optimal complexity of $O(\ell)$ as shown in Theorem \ref{thm:algorithm}.
\end{remark}

\section{Statistics on $\ut{2}$ and $\ut{3}$: data and conjectures}
\label{sec:alg-results}

In this section we discuss six conjectures obtained by analyzing the data obtained from our algorithms. We hope that these conjectures will fuel new developments at the intersection of semigroup theory, combinatorics and probability.

\subsection{Structural theorem for $\ut{n}$}
On a two-letter alphabet, the Structural Theorem leads to an efficient algorithm to compute the minimal and maximal elements of an equivalence class in $\ut{2}$, and thus resulted in the dramatic speedup for various algorithms for $\ut{2}$. Unfortunately, neither parts of the Structural Theorem does not hold for $\ut{3}$ .
\begin{lemma}\label{lem:not.structural}
There exists words $w,w'$ over a two letter alphabet such that $w \vee w' \sim_3 w \wedge w'$ but $w \not\sim_3 w'$. 
There also exists another pair of words $w,w'$ over a two letter alphabet such that $w \wedge w' \sim_3 w \sim_3 w'$ but $w \not\sim_3 w \vee w'$.
\end{lemma}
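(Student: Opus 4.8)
The plan is to treat the lemma as two explicit refutations of the two halves of the Structural Theorem (Theorem~\ref{thm:chain}) at $n=3$, settling each by exhibiting a concrete pair of words over $\{a,b\}$ and certifying the required (non-)equivalences via the signature characterisation of Theorem~\ref{thm:reduce_checks}. For two letters and $n=3$, that theorem reduces each assertion $x \sim_3 y$ to the equality of the two degree-$1$ polygons $(A(x),B(x))=(A(y),B(y))$ together with the four degree-$2$ Newton polytopes $\Newt(g_u^x)=\Newt(g_u^y)$ for $u \in \{aa,ab,ba,bb\}$; dually, a single inequality among these six polytopes certifies $x \not\sim_3 y$. Thus the whole proof is a finite polytope computation once the witnessing words are in hand, and these are produced by running Algorithm~\ref{alg:equivalence.class} to decompose a $\sim_2$-class into its $\sim_3$-subclasses.

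For the first statement I would search, using the algorithms of Section~\ref{sec:alg}, for a pair of \emph{incomparable} words $w,w'$ whose meet and join satisfy $w\wedge w' \sim_3 w\vee w'$ while $w\not\sim_3 w'$. Incomparability is forced: were $w\preceq w'$, then $w\wedge w'=w$ and $w\vee w'=w'$, so the hypothesis would read $w\sim_3 w'$, contradicting the conclusion. Note also that $w\wedge w'$ and $w\vee w'$ are then distinct and form a nontrivial $\ut{3}$ identity, so by Theorem~\ref{thm:shortest.ut3} they have length at least $22$; this explains why no shorter example exists and indicates where to look. Since $\sim_3$ refines $\sim_2$, the four words $w\wedge w',w,w',w\vee w'$ all lie in one $\sim_2$-class and hence, by Corollary~\ref{polytopes}, share a common degree-$1$ signature. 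Consequently the equivalence $w\wedge w'\sim_3 w\vee w'$ need only be checked on the four degree-$2$ polytopes, and the non-equivalence $w\not\sim_3 w'$ is necessarily witnessed by a single degree-$2$ polytope $\Newt(g_u^{w})\neq\Newt(g_u^{w'})$, which I would display explicitly.

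For the second statement I would similarly locate words $w,w'$ with $w\wedge w'\sim_3 w\sim_3 w'$ yet $w\not\sim_3 w\vee w'$; again $w,w'$ must be incomparable, since $w\preceq w'$ would give $w\vee w'=w'$ and hence $w\sim_3 w\vee w'$. Here it suffices to certify $w\wedge w'\sim_3 w$ and $w\wedge w'\sim_3 w'$ (so that $w\sim_3 w'$ by transitivity) by matching all six degree-$\le 2$ polytopes in each case, and then to certify the escape of the join by exhibiting one degree-$2$ polytope on which $w\vee w'$ differs from $w$. In both parts the verification is carried out by Algorithm~\ref{alg:signature}, which outputs precisely these Newton polytopes from the staircase path of each word.

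The only genuine difficulty is the \emph{search}: the witnesses necessarily have length in the low twenties, so they cannot be guessed and must be extracted from the output of the $\sim_3$-classification algorithms. Once found, the certification is a routine comparison of lattice polytopes in $\mathbb{R}^4$, entirely mechanical but voluminous, and I would relegate the explicit coordinate lists of the four degree-$2$ polytopes to a table or to the accompanying code.
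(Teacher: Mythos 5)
Your verification framework is exactly the paper's: both arguments rest on exhibiting explicit pairs over $\{a,b\}$ and certifying each claimed (non-)equivalence through the signature characterisation of Theorem~\ref{thm:reduce_checks} (for $n=3$ and two letters, six lattice polytopes per word), with the witnesses found by computer search. Your supporting observations are also correct, and slightly sharper than what the paper records: incomparability of $w,w'$ is indeed forced in both parts, and the lower bound of $22$ on the length of any witness (via Theorem~\ref{thm:shortest.ut3}) correctly explains why the examples must be long.

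The gap is that you never produce the witnesses, and for this lemma the witnesses \emph{are} the proof. The statement is purely existential; all of the surrounding machinery you invoke (reduction to six polytope comparisons, transitivity, forced incomparability) is already available from Theorem~\ref{thm:reduce_checks} and Corollary~\ref{polytopes}, so the only mathematical content left is the exhibition of concrete words for which the computations come out as claimed. A plan that says the examples ``must be extracted from the output of the $\sim_3$-classification algorithms'' does not establish that the search succeeds --- a priori it could come back empty, and ruling that out is precisely what the lemma asserts. The paper closes this gap by writing the words down: for the first part, $w = u\,ba\,b\,ab\,v$ and $w' = u\,ab\,b\,ba\,v$ with $u = baaaabaaaaaababbbbbabbba$ and $v = babbabbaabbabaa$, where $w'$ turns out to be an isoterm for $\ut{3}$ while $w$, $w\vee w'$ and $w\wedge w'$ are $\sim_3$-equivalent; for the second part, $w = u\,ab\,ba\,v$ and $w' = u\,ba\,ab\,v$ with $u = abaaaabbbbaaaabbaabba$ and $v = abbaaababbabababbbb$, where all four words except $w\vee w'$ are $\sim_3$-equivalent. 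Until your proposal contains such explicit data (or some other argument for existence), it is a description of how one would find a proof rather than a proof.
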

\begin{proof}
For the first pair, let $u = baaaabaaaaaababbbbbabbba$, 
$v = babbabbaabbabaa$, and define
$$w = u\, ba\, b \, ab \, v, \quad w' = u\, ab\, b \, ba \, v.  $$
Then $w \vee w' = u\, ba\, b \, ba \, v$ and $w \wedge w' = u\, ab\, b \, ab \, v$. 
For the second pair, let $u = abaaaabbbbaaaabbaabba$, $v = abbaaababbabababbbb$, and define
$$ w = u \, ab \, ba \, v, \quad w' = u \, ba \, ab \, v. $$
Then $w \wedge w' = u \, ba \, ba \, v$, 
$w \vee w' = u \, ab \, ab \, v$.
Figure \ref{fig:not.adjacent}(a) and (b) show the paths and $\ut{2}$ signatures of $w \vee w', w \wedge w'$ for the first and second pair, respectively. It is immediate from this figure that in both examples, all four words $w,w',w \vee w'$ and $w \wedge w'$ are equivalent in $\ut{2}$. However, in the first pair, $w'$ is an isoterm in $\ut{3}$, while the other three are equivalent. In the second example, all but $w \vee w'$ are equivalent in $\ut{3}$.
\end{proof}

In the first example considered in the proof of the previous lemma, the three words lying in the same $\sim_3$ class are connected by adjacent swaps, however, unlike the situation for $\ut{2}$, the order of these swaps is important.  Based on further experiments, we conjecture that the property of being connected via adjacent swaps holds for $\sim_n$ classes more generally. That is, while the Structural Theorem does not hold for $n \geq 3$, the following conjecture states that Corollary \ref{cor:chain} generalises. 
\begin{conjecture}\label{conj:adjacent}
Let $w, v \in \Sigma^+$ with $w \neq v$ and $|\Sigma|=2$, and let $n \geq 2$. Then $w \sim_n v$, if and only if there exists a positive integer $k$ and a sequence of words $u(i)$, for $i=0, \ldots, k$ such that 
$$w=u(0) \leftrightarrow u(1) \leftrightarrow \cdots \leftrightarrow u(k)=v \mbox{ and }
u(0) \sim_n u(1) \sim_n \cdots \sim_n u(k).$$
\end{conjecture}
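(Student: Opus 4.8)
The plan is to prove the nontrivial ``only if'' direction; the ``if'' direction is immediate, since $\sim_n$ is an equivalence relation, so the hypotheses $w=u(0) \sim_n u(1) \sim_n \cdots \sim_n u(k)=v$ force $w \sim_n v$ by transitivity. First I would recast the statement geometrically. Identifying each word in $W(\ell_a,\ell_b)$ with its staircase path $\gamma^w$, a single adjacent swap $w \leftrightarrow u$ corresponds to flipping exactly one corner of the path, that is, to adding or deleting a single unit box; thus the relation $\leftrightarrow$ is precisely the covering relation of the lattice $(W(\ell_a,\ell_b),\preceq)$, and the graph whose edges are adjacent swaps is the (undirected) Hasse diagram of this lattice. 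In this language the conjecture asserts that, for each $w$, the $\sim_n$-class of $w$ induces a \emph{connected} subgraph of this flip graph: any two words with equal $\ut{n}$-signature can be joined by a chain of single corner flips that never leaves the class.

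My main strategy would be induction on $n$, with the base case $n=2$ supplied by Corollary~\ref{cor:chain}. For the inductive step, suppose $w \sim_n v$. Then $w \sim_{n-1} v$, so by the inductive hypothesis $w$ and $v$ are joined by a chain of single flips lying entirely inside the $\sim_{n-1}$-class $T$ of $w$. By Theorem~\ref{thm:reduce_checks}, the $\sim_n$-class $S$ is exactly the subset of $T$ consisting of those words whose full degree $\le n-1$ signature agrees with that of $w$. To upgrade the chain in $T$ to a chain in $S$, I would analyse how a single corner flip transforms the higher-degree Newton polytopes. Proposition~\ref{prop:transformation} expresses each support set $\gamma^{ua_j}$ as $(\gamma^u \times \gamma^{a_j}) \cap C_u$, so a flip, which alters the degree-$1$ data at one corner, propagates in a controlled (if intricate) way to every polytope in the signature. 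The aim would then be to introduce a distance between paths --- say the number of boxes enclosed between $\gamma^w$ and $\gamma^v$, equivalently $\sum_i |\alpha^w_i - \alpha^v_i|$ --- and to show that whenever $w \neq v$ lie in the same class $S$, there is a single \emph{legal} flip on $w$ (one preserving the entire degree $\le n-1$ signature) that strictly decreases this distance; iterating would then produce the desired chain inside $S$.

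The hard part will be exactly this last step: establishing the existence of a signature-preserving flip that moves toward $v$. For $n=2$ this is easy because the class is a lattice interval $[\underline{w}, \overline{w}]$ by Theorem~\ref{thm:chain}, so any corner strictly between the two extremal paths may be flipped freely. But Lemma~\ref{lem:not.structural} shows that for $n \ge 3$ the class is neither an interval nor closed under $\vee$ and $\wedge$, so this convexity argument collapses. The obstruction is that the higher-degree polytopes impose constraints coupling several corners of the path simultaneously: because taking convex hulls does not commute with the intersection in Proposition~\ref{prop:transformation}, a flip that is harmless for the degree-$1$ polygons $A,B$ may nonetheless change a higher Newton polytope, and conversely the legal flips need not point monotonically toward $v$. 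Proving that a monotone legal flip always exists --- or finding a suitable substitute for the interval argument, perhaps via an exchange or shelling argument on the set of paths with fixed $\ut{n}$-signature --- is the genuine difficulty, and is presumably why the statement is posed as a conjecture rather than a theorem.
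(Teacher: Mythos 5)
You have not proved the statement, and you acknowledge this yourself; but it is worth being precise about where things stand. The statement you were given is Conjecture \ref{conj:adjacent} of the paper: the authors offer no proof of it, only supporting evidence, namely the case $n=2$ (which is exactly Corollary \ref{cor:chain}, a consequence of the Structural Theorem \ref{thm:chain}), sampling experiments, the remark that the known families of $\ut{n}$ identities satisfy the statement trivially because they consist of single adjacent swaps, and Example \ref{ex:threeswap} showing that the two-letter hypothesis cannot be dropped. So there is no proof in the paper to compare you against, and the correct assessment of your proposal is that it is a plan of attack on an open problem, not a proof.

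Within that plan, the parts you do establish are fine: the ``if'' direction is immediate from transitivity of $\sim_n$; adjacent swaps are indeed the covering relation of the lattice $(W(\ell_a,\ell_b),\preceq)$, so the conjecture is a connectivity statement about the flip graph; and the base case $n=2$ is precisely Corollary \ref{cor:chain}. The genuine gap is the one you name: the existence, for $n\geq 3$, of a flip that preserves the whole degree $\leq n-1$ signature and strictly decreases your box-counting distance to $v$. Nothing in the paper can fill this. Lemma \ref{lem:not.structural} shows that $\sim_3$ classes are neither intervals nor closed under $\vee$ and $\wedge$, so the interval argument driving the $n=2$ case is unavailable, and Proposition \ref{prop:transformation} controls only the support sets, not their convex hulls --- the failure of $\conv$ to commute with the intersection there is exactly the obstruction you identify. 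Two further cautions. First, your reduction is strictly stronger than the conjecture: you demand a chain monotone in the distance $\sum_i\lvert\alpha^w_i-\alpha^v_i\rvert$, whereas the conjecture only asks for connectivity; it is conceivable that the conjecture is true while such geodesic-type monotone chains do not always exist (the paper's own remark that for the first example of Lemma \ref{lem:not.structural} ``the order of these swaps is important'' is a warning sign in this direction), in which case your approach would fail even though the statement holds. Second, the induction on $n$ buys you essentially nothing as set up: the inductive hypothesis produces a chain inside the $\sim_{n-1}$ class, but your inductive step never uses that chain --- it attempts to build a fresh chain inside the $\sim_n$ class from scratch, so the entire content of the argument is concentrated in the unproven flip-existence claim.
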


Conjecture \ref{conj:adjacent} is not true for alphabets of size greater than two, see Example \ref{ex:threeswap}. On the other hand, it holds trivially for the existing families of $\ut{n}$ identities constructed in \cite{I14, TaylorThesis}, as these are of the form $w \sim_n v$ for some $w \leftrightarrow v$.

\begin{figure}[h!]
\begin{subfigure}[t]{0.5\linewidth} 
\centering 
\resizebox{\linewidth}{!}{\includegraphics{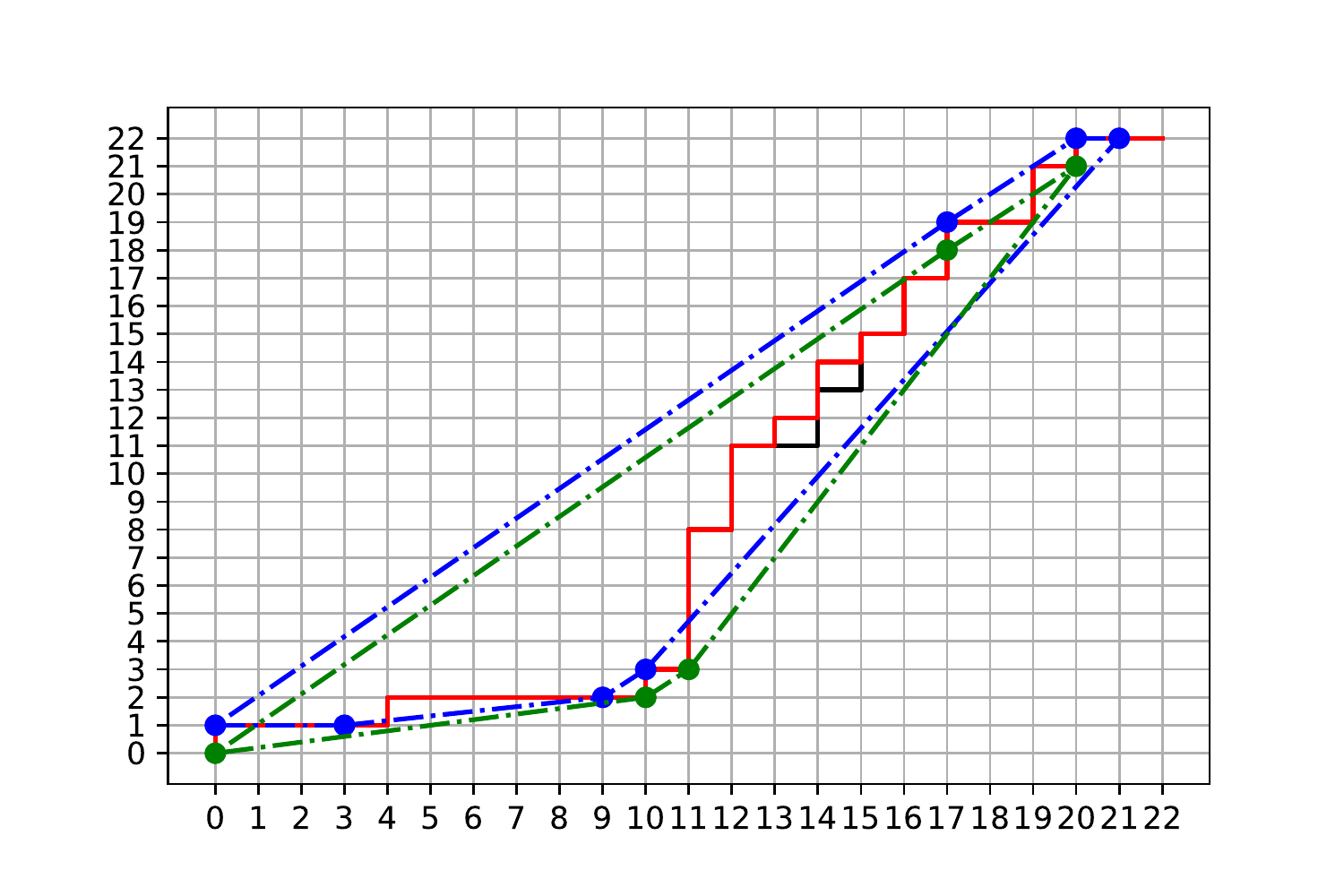}}
\caption{}
\end{subfigure}
\begin{subfigure}[t]{0.45\linewidth} 
\centering 
\resizebox{\linewidth}{!}{\includegraphics[width=0.49\textwidth]{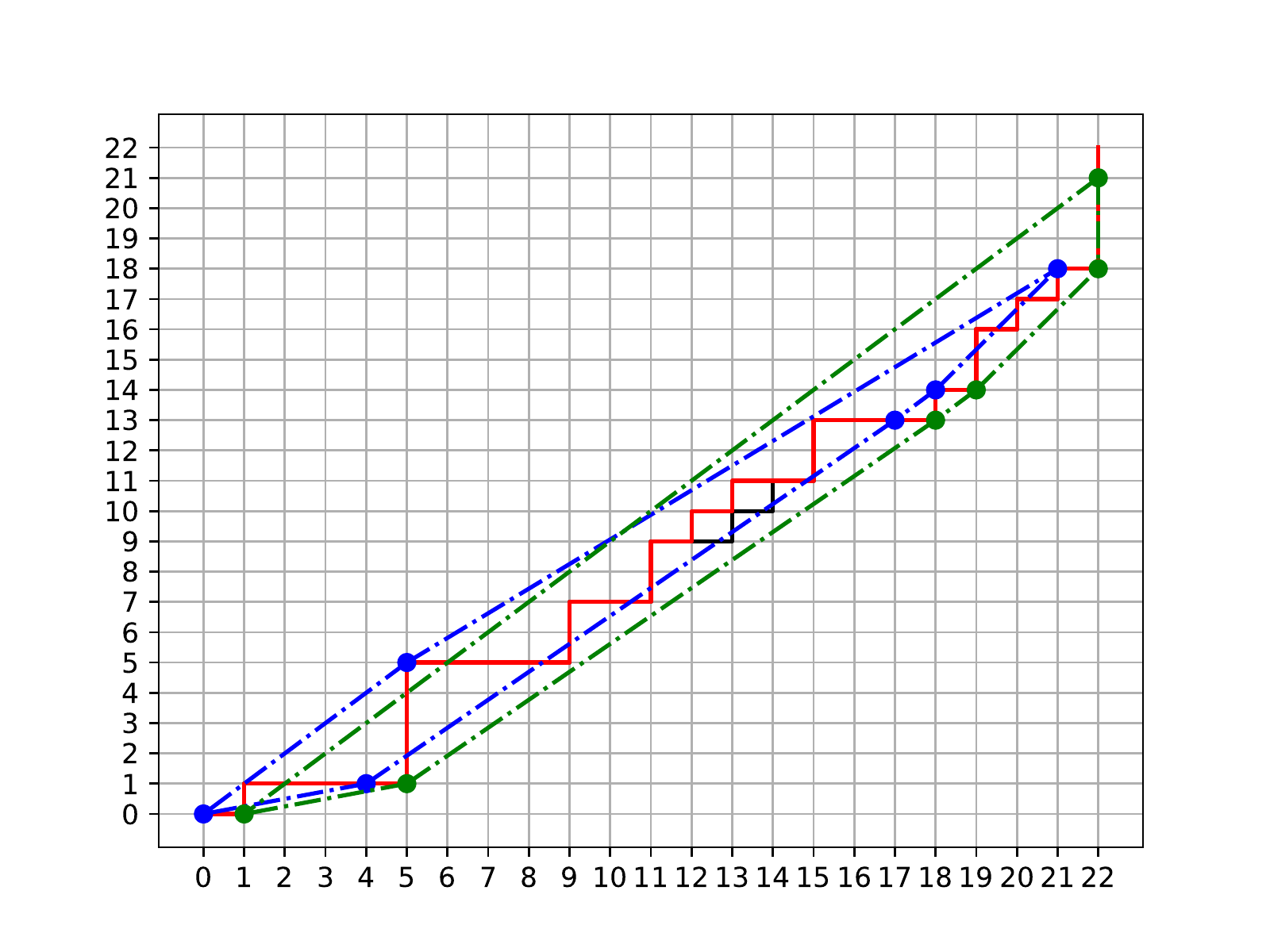}}
\caption{}
\end{subfigure}
\caption{Paths of the words $w \wedge w'$ in black and $w \vee w'$ in red, and their $\ut{2}$ polygons for the examples of Lemma~\ref{lem:not.structural}. These two examples show that neither parts of the Structural Theorem does not hold for $n = 3$.}
\label{fig:not.adjacent}
\end{figure}

One can formulate a weaker version of Conjecture \ref{conj:adjacent} as follows. Say that a word is locally isolated in $\ut{n}$ if $w' \leftrightarrow w$ implies $w' \not\sim_n w$, that is, $w$ does not form $\ut{n}$ identities with any of its neighbours. By Corollary~\ref{cor:chain}, a word over a two letter alphabet is an isoterm for $\ut{2}$ if and only if it is locally isolated. We conjecture that this holds for all $\ut{n}$. 

\begin{conjecture}\label{conj:isoterm.local}
A word over a two letter alphabet is an isoterm for $\ut{n}$ if and only if it is locally isolated in $\ut{n}$.
\end{conjecture}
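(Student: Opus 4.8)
The forward implication is immediate from the definitions: if $w$ is an isoterm for $\ut{n}$ its $\sim_n$-class is $\{w\}$, so no neighbour $w'\leftrightarrow w$ lies in the class and $w$ is locally isolated. All the content is in the reverse implication, which I would prove in contrapositive form: \emph{if $w$ is not an isoterm for $\ut{n}$, then some neighbour $w'\leftrightarrow w$ satisfies $w'\sim_n w$}. The first observation is that $\sim_n$ refines $\sim_2$, so such a $w$ is already a non-isoterm for $\ut{2}$; by Corollary~\ref{cor:chain} its $\sim_2$-class is the whole lattice interval $[\underline w,\overline w]$ (Corollary~\ref{lattice_class}), on which the relation $\leftrightarrow$ is exactly the covering relation, each adjacent swap changing a single $a$-height $\alpha_i$ by $\pm1$. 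The $\sim_n$-class of $w$ is a subset of this interval, and the goal becomes: among the covering neighbours of $w$ inside $[\underline w,\overline w]$, exhibit one whose degree-$d$ signatures for $2\le d\le n-1$ also coincide with those of $w$.

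To do this I would first characterise the signature-preserving swaps. A swap at an edge-interior lattice point $p$ (a non-vertex of both $A(w)$ and $B(w)$) automatically preserves the degree-$1$ signature by Lemma~\ref{props}, so the real question is its effect on $\Newt(g_u^w)$ for scattered subwords $u$ with $2\le|u|\le n-1$. Here Proposition~\ref{prop:transformation} is the key tool: each higher support set $\gamma^{u}$ is built from lower-degree data by an affine embedding, a product with a degree-$1$ height set, and an intersection with the cone $C_u$. Flipping the box at $p$ moves only those points of $\gamma^{u}$ whose factorisation of $w$ splits at $p$; I would track these moved points and, using the Newton-polytope equality criterion of Theorem~\ref{thm:reduce_checks}, try to show that the flip leaves every $\Newt(g_u^w)$ unchanged precisely when $p$ is extremal in a suitable sense for all $u$ at once. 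Granting such a characterisation, the conjecture reduces to showing that a nontrivial $\sim_n$-class always contains a word admitting at least one such simultaneously safe flip, which I would attempt to extract from an extremal choice of a witness $v\sim_n w$, $v\neq w$, at minimal lattice distance from $w$.

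The main obstacle is exactly the phenomenon that makes this conjecture subtle rather than routine: a box-flip can destroy a degree-$d$ Newton polytope for some $d\ge2$ while leaving the degree-$1$ polygons intact, and this is the same mechanism by which the Structural Theorem fails for $\ut{3}$ in Lemma~\ref{lem:not.structural}. Consequently one cannot simply flip greedily toward a known equivalent $v$: as the discussion after Lemma~\ref{lem:not.structural} notes, the swaps connecting two $\sim_n$-equivalent words may have to be performed in a prescribed order, and intermediate words may leave the class, so the ``first step'' toward $v$ need not preserve the full signature. Establishing that \emph{some} safe flip nonetheless exists at every non-isoterm is therefore the crux. I would attempt a double induction, on $n$ and on the lattice distance to the nearest equivalent word, peeling off the top degree $n-1$ via Proposition~\ref{prop:transformation} so as to reduce an obstruction at degree $n-1$ to one at degree $n-2$ that is already handled; but proving that a single corner $p$ can be chosen extremal for all $u\in\Sigma^{n-1}$ simultaneously is the step I expect to resist the machinery developed so far and to require a genuinely new combinatorial idea.
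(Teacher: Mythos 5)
You have not produced a proof, and none should be expected: the statement is given in the paper as a conjecture (Conjecture~\ref{conj:isoterm.local}), not a theorem, and the paper itself offers no proof. What the paper does prove is the case $n=2$, exactly as you note, via Corollary~\ref{cor:chain}: for a two-letter alphabet, a word is an isoterm for $\ut{2}$ if and only if it is locally isolated. For $n\geq 3$ the paper supplies only numerical evidence (Example~\ref{ex:isoterm.ut3} and Figure~\ref{fig:isoterm-fraction-ut3}). Your handling of the trivial forward direction and your reduction of the reverse direction to exhibiting a single signature-preserving adjacent swap at every non-isoterm are both correct, and they faithfully restate what the conjecture asks.

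The genuine gap is the one you flag yourself, and it is the entire content of the conjecture: you never prove that a nontrivial $\sim_n$-class contains a word admitting a ``simultaneously safe flip,'' nor do you establish the intermediate claim that safe flips are characterised by an extremality condition on the flipped lattice point holding for all $u\in\Sigma^d$, $2\le d\le n-1$, at once. The second claim is unsupported and is unlikely to hold in any naive pointwise-local form: preservation of the higher-degree polytopes $\Newt(g_u^w)$ is not a local property of the path, precisely because convex hull fails to commute with the intersection by $C_u$ in Proposition~\ref{prop:transformation} (see the Remark following it). Your proposed double induction also lacks a working mechanism for the step on lattice distance to a nearest witness $v$: minimality of that distance gives no control over whether the first flip toward $v$ preserves the full signature, and Lemma~\ref{lem:not.structural} exhibits exactly this failure mode --- $\sim_3$-equivalent words whose connecting swaps must be performed in a prescribed order, with greedy intermediate words leaving the class. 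So your text is a reasonable research program, consistent with the paper's own assessment that the statement is open, but it is not a proof, and its crux would require an idea not present in the paper's machinery.
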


Verifying whether a word is locally isolated in $\ut{n}$ can be done efficiently, since this amounts to comparing with each of its neighbours, whose number is bounded above by the length of the word. This allows us to estimate the fraction of words in $W(\ell_a,\ell_b)$ that are locally isolated in $\ut{n}$ through random sampling. Since an isoterm for  $\ut{n}$ must be locally isolated, this gives numerical upper bounds for the fraction of $\ut{n}$ isoterms in $W(\ell_a,\ell_b)$. If Conjecture \ref{conj:isoterm.local} holds, then this upper bound is tight.

\begin{figure}[h!]
\includegraphics[height=0.75\textheight]{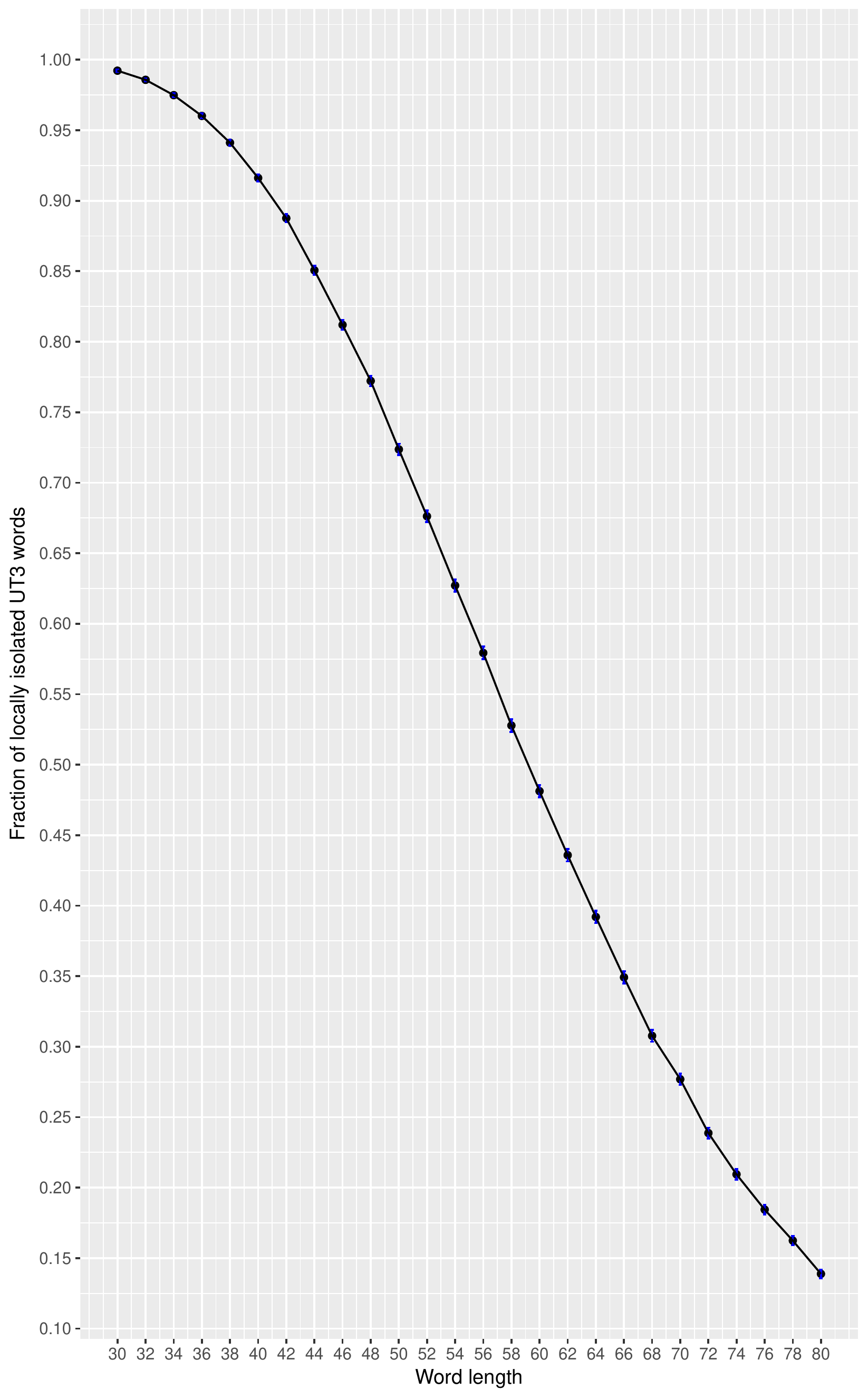}
\caption{Estimated fraction of locally isolated $\ut{3}$ words in $W(\ell/2,\ell/2)$ for $30 \leq \ell \leq 80$ and their $95\%$ confidence intervals shown in blue.}
\label{fig:isoterm-fraction-ut3}
\end{figure}
\begin{example}\label{ex:isoterm.ut3}
We estimate the fraction of locally isolated $\ut{3}$ words in $W(\ell_a,\ell_a)$  for $15 \leq \ell_a \leq 40$. For each value of $\ell_a$, we draw $50\, 000$ independent samples from $W(\ell_a,\ell_a)$, and count the number of locally isolated $\ut{3}$ words. We plot the estimators together with the $95\%$ confidence intervals in Figure \ref{fig:isoterm-fraction-ut3}. Note that the fraction of locally isolated words sharply drops with the word length. Conjecture \ref{conj:isoterm.local} states that this fraction is in fact an unbiased estimate of the true fraction of $\ut{3}$ isoterms. 
\end{example}

Existing $\ut{n}$ identities in the literature have rather intricate and symmetrical constructions \cite{Cain17,I17,TaylorThesis}. In contrast, our experimental findings suggest that for long words, it is extremely easy to randomly generate identities. For instance, if one picks a random word from $W(40,40)$, and just checks amongst its neighbours, Figure \ref{fig:isoterm-fraction-ut3} suggests that the probability that one finds a $\ut{3}$ identity in this way is more than $85\%$.

\begin{conjecture}\label{conj:isoterm.fraction.1}
Fix $r \in (0,1)$ and $n \geq 2$. Let $i(n,\ell,r)$ be the fraction of $\ut{n}$ isoterms in $W(\ell r,\ell(1-r))$. Then $i(n,\ell,r)$ is a monotone decreasing function in $\ell$. Furthermore, $i(n,\ell,r) \to 0$ as $\ell \to \infty$.
\end{conjecture}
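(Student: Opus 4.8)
The plan is to prove the two assertions separately, treating the limit $i(n,\ell,r)\to 0$ as the tractable part and isolating monotonicity as the genuine obstacle. The lever for the limit is the observation that $\sim_n$ is a \emph{congruence} on the free semigroup $\{a,b\}^+$: since $w\sim_n v$ means $\varphi(w)=\varphi(v)$ for every morphism $\varphi\colon\{a,b\}^+\to\ut{n}$, one gets $\varphi(xwy)=\varphi(x)\varphi(w)\varphi(y)=\varphi(x)\varphi(v)\varphi(y)=\varphi(xvy)$, hence $xwy\sim_n xvy$ for all $x,y$. Fix once and for all a single nontrivial identity $p\sim_n q$ with $p\neq q$ over $\{a,b\}$; such a pair exists for every $n\geq 2$ by the cited literature (Adjan's identity for $n=2$, the length-$22$ identities of Theorem \ref{thm:shortest.ut3} for $n=3$, and more generally the nontrivial identities known to hold in each $\ut{n}$). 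As $p$ and $q$ form an identity they share content, say $(k_a,k_b)$, and length. Now if $w\in W(\ell_a,\ell_b)$ contains $p$ as a \emph{factor}, writing $w=xpy$, then $w':=xqy$ satisfies $w\sim_n w'$ by the congruence property, lies in $W(\ell_a,\ell_b)$, and differs from $w$. Thus every word containing $p$ as a factor fails to be an isoterm, so $i(n,\ell,r)$ is at most the fraction of words in $W(\ell r,\ell(1-r))$ that \emph{avoid} $p$ as a factor.

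It then suffices to show that a uniformly random word of content $(\lfloor\ell r\rfloor,\ell-\lfloor\ell r\rfloor)$ contains $p$ with probability tending to $1$. I would do this by a second-moment argument: partition the $\ell$ positions into $\lfloor\ell/|p|\rfloor$ consecutive disjoint blocks of length $|p|$ and let $X_j$ be the indicator that block $j$ spells $p$. A local-limit estimate for sampling without replacement gives $\Pr[X_j=1]\to r^{k_a}(1-r)^{k_b}>0$, so $\mathbb{E}[\sum_j X_j]=\Theta(\ell)$; since the blocks are disjoint the $X_j$ are negatively associated, whence $\mathrm{Var}(\sum_j X_j)=O(\ell)$ and $\Pr[\sum_j X_j=0]=O(1/\ell)\to 0$ by Chebyshev. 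This yields $i(n,\ell,r)\to 0$ with an explicit $O(1/\ell)$ rate, completing the second assertion.

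The monotonicity of $i(n,\ell,r)$ in $\ell$ is, I expect, the real difficulty, and presumably the reason the statement is posed as a conjecture: the argument above is purely asymptotic and gives no control between consecutive admissible lengths (note that $i(n,\ell,r)$ is only defined when $\ell r\in\mathbb{N}$, so monotonicity is asserted along an arithmetic progression of lengths). The natural strategy is to construct, for each admissible increment $\ell\to\ell'$, a map $W(\ell r,\ell(1-r))\to W(\ell' r,\ell'(1-r))$ sending isoterms to isoterms while controlling the ratio of the normalising cardinalities $\binom{\ell}{\ell r}$, so that the isoterm fraction can only decrease. For $n=2$ one has an unusually concrete handle: by Corollary \ref{lattice_class} the $\sim_2$-classes are lattice intervals of $W(\ell_a,\ell_b)$ and isoterms are exactly the singleton intervals, i.e.\ the words with $\underline{w}=\overline{w}$ in the construction of Theorem \ref{thm:min.max}, equivalently (Corollary \ref{cor:chain}) the words admitting no signature-preserving adjacent swap. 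Monotonicity for $n=2$ thus reduces to a combinatorial inequality about staircase paths whose Newton polygons $\conv(\alpha^w),\conv(\beta^w)$ pin every corner to a polygon vertex.

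The obstacle is that lengthening a word can simultaneously create and destroy such "free'' corners, so a naive monotone inclusion cannot work: no clean measure-decreasing injection is apparent, and any coupling would have to track the global Newton-polygon data rather than a local statistic. I would therefore first attempt monotonicity only for $n=2$ and only along one arithmetic progression, using the min--max description to set up a transfer between $W(\ell_a,\ell_b)$ and $W(\ell_a+1,\ell_b+1)$, and only afterwards try to lift it to general $n$ via the fact — already used above — that containment of a fixed identity factor is a monotone event in the length. Absent such a transfer map, establishing exact monotonicity (as opposed to eventual decay) remains the crux.
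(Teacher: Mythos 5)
This statement is one of the paper's six \emph{conjectures}: the authors offer no proof of it, only sampling experiments (Example \ref{ex:isoterm.ut3}, Figure \ref{fig:isoterm-fraction-ut3}) and the heuristic discussion surrounding Conjecture \ref{conj:fraction.ut3}. So the question is not how your route compares with theirs, but whether your argument establishes the statement — and it does not. You prove (modulo the issue below) only the decay $i(n,\ell,r)\to 0$, and you explicitly leave the monotonicity of $i(n,\ell,r)$ in $\ell$ open, offering a programme (a measure-controlled injection between consecutive admissible contents) rather than an argument. Since the conjecture asserts both monotonicity and decay, this is a genuine gap, and your own diagnosis is accurate: it is precisely the part with no visible attack, which is presumably why the authors posed the statement as a conjecture rather than a theorem.

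The half you do address is essentially sound and goes beyond anything in the paper. The congruence property of $\sim_n$, the existence of a nontrivial two-letter identity $p\sim_n q$ for every $n\geq 2$ (from \cite{I14}, or Theorem \ref{thm:shortest.ut3} for $n=3$), and equality of contents (Lemma \ref{props}(i)) do imply that every word containing $p$ as a factor fails to be an isoterm, so $i(n,\ell,r)$ is bounded by the probability that a uniform word of the given content avoids $p$. However, one technical claim in your second-moment step is false: indicators of disjoint blocks spelling a fixed pattern are \emph{not} negatively associated under sampling without replacement when the pattern uses both letters, because negative association is only preserved under \emph{monotone} functions of disjoint coordinate sets, and ``block spells $p$'' is not monotone. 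Concretely, for $p=ab$ and $\ell_a=\ell_b=\ell/2$ one computes
\begin{equation*}
\Pr[X_iX_j=1]=\frac{\ell(\ell-2)}{16(\ell-1)(\ell-3)}
\;>\;
\Bigl(\frac{\ell}{4(\ell-1)}\Bigr)^{2}=\Pr[X_i=1]\,\Pr[X_j=1],
\end{equation*}
so pairwise covariances can be strictly positive. The step is repairable: writing $m=|p|$ and $k=k_a$, a direct expansion of the falling-factorial (hypergeometric) expressions gives
\begin{equation*}
\frac{\Pr[X_iX_j=1]}{\Pr[X_i=1]\Pr[X_j=1]}
=1+\frac{1}{\ell}\Bigl(m^{2}-\frac{k^{2}}{r}-\frac{(m-k)^{2}}{1-r}\Bigr)+O(\ell^{-2}),
\end{equation*}
and $\frac{k^{2}}{r}+\frac{(m-k)^{2}}{1-r}\geq m^{2}$ by Cauchy--Schwarz, so each covariance is at most $O(\ell^{-2})$; summing over the $O(\ell^{2})$ pairs still yields $\mathrm{Var}\bigl(\sum_j X_j\bigr)=O(\ell)$, and Chebyshev gives $\Pr\bigl[\sum_j X_j=0\bigr]=O(1/\ell)$ as you claimed. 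With that repair the decay assertion is proved; the monotonicity assertion — and hence the conjecture as stated — remains open.
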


\subsection{Fraction of isoterms}

What fraction of the identities which hold in $\ut{2}$ also hold in $\ut{3}$? For short words this fraction is small: for words with length less than 21, for example, it is zero. However, we claim that for long words, a randomly chosen $\ut{2}$ identity has a high chance of being a $\ut{3}$ identity. Specifically, consider the setup of Conjecture~\ref{conj:isoterm.fraction.1}. Fix $r \in (0,1)$, Consider words in $W(\ell r,\ell(1-r))$. Rescale the polygon $\conv(\alpha)$ so that it fits inside a rectangle with side lengths $r \times (1-r)$. For most words, this implies the distance between adjacent points in the rescaled polygon are of order $O(\ell^{-1})$. As $\ell \to \infty$, this has the same effect as finely discretising the polytope $\conv(\alpha)$. For large $\ell$, one would thus expect
$$ \conv(\alpha^w \times \alpha^w \cap C_a) = \conv(\alpha^w \times \alpha^w) \cap C_a $$
for most words $w \in W(\ell r,\ell(1-r))$, 
where $C_a$ is the polyhedron defined in Proposition \ref{prop:transformation}. A similar argument applies to the other polytopes in the degree-two signature. In view of the discussion following Lemma \ref{prop:transformation}, this means for large $\ell$, most $\ut{2}$ identities are also $\ut{3}$ identities. This should especially be true for identities coming from a single adjacent swap.

\begin{conjecture}\label{conj:fraction.ut3}
Fix $r \in (0,1)$ and consider words in $W(\ell r,\ell(1-r))$. Let $w$ be a word chosen uniformly at random in $W(\ell r,\ell(1-r))$, let $u$ be a word chosen uniformly at random from the $\sim_2$-class of $w$, and let $v$ be a word chosen uniformly at random from  amongst the immediate neighbours of $w$ in this class. Then
$\mathbb{P}(w \sim_3 u)$ and $\mathbb{P}(w \sim_3 v)$ are monotone increasing in $\ell$, and they tend to $1$ as $\ell \to \infty.$
\end{conjecture}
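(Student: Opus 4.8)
The plan is to treat the limiting assertions $\mathbb{P}(w\sim_3 u)\to 1$ and $\mathbb{P}(w\sim_3 v)\to 1$ as the substantive content, and to obtain monotonicity, if at all, as a secondary refinement.

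\textbf{Reduction to the degree-two signature.} By Theorem~\ref{thm:reduce_checks}, $w\sim_3 u$ holds if and only if the degree-$1$ and degree-$2$ signatures of $w$ and $u$ coincide. Since $u$ lies in the $\sim_2$-class of $w$, the degree-$1$ signatures agree by construction, so $\{w\sim_3 u\}$ is exactly the event that the four degree-$2$ Newton polytopes $\Newt(g^w_c)$ and $\Newt(g^u_c)$, $c\in\{aa,ab,ba,bb\}$, are equal. Thus both target probabilities reduce to controlling equality of degree-$2$ polytopes for two words already known to share a degree-$1$ signature.

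\textbf{A sufficient commuting event.} Write $c=u'a_j$ with $u'\in\{a,b\}$, so that by Proposition~\ref{prop:transformation} the degree-$2$ support is $\gamma^{c}\simeq(\gamma^{w,u'}\times\gamma^{w,a_j})\cap C_{u'}$, where $\gamma^{w,u'},\gamma^{w,a_j}$ are the degree-$1$ supports. Define the event $\mathcal{E}(w)$ that, for all four choices of $c$,
\[
\conv\!\big((\gamma^{w,u'}\times\gamma^{w,a_j})\cap C_{u'}\big)=\big(\conv(\gamma^{w,u'})\times\conv(\gamma^{w,a_j})\big)\cap C_{u'},
\]
i.e. that taking convex hulls commutes with intersecting by $C_{u'}$ (the inclusion $\subseteq$ being automatic, cf. the Remark after Proposition~\ref{prop:transformation}). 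On $\mathcal{E}(w)\cap\mathcal{E}(u)$ every degree-$2$ polytope is determined by the degree-$1$ polytopes $A,B$ alone, which agree because $w\sim_2 u$; hence $w\sim_3 u$. Therefore $\mathbb{P}(w\sim_3 u)\ge\mathbb{P}(\mathcal{E}(w)\cap\mathcal{E}(u))$, and it suffices to prove $\mathbb{P}(\mathcal{E}(w))\to 1$ together with the analogous statement for the conditional law of $u$.

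\textbf{Limit shape and anticoncentration.} Identify $w$ with its staircase path and invoke a law of large numbers: after rescaling by $\ell^{-1}$ the path $\gamma^w$ concentrates on the segment from $(0,0)$ to $(r,1-r)$, with fluctuations of Brownian-bridge order $\sqrt{\ell}$. Failure of $\mathcal{E}(w)$ requires one of the bounding hyperplanes of $C_{u'}$ (all of coordinate type, e.g. $i<j'$ or $i'\le j$) to slice $\conv(\gamma^{w,u'}\times\gamma^{w,a_j})$ so as to create an extreme vertex on the boundary of $C_{u'}$ that is not itself a point of the product set. Along the bulk of the rescaled path, consecutive lattice points of the product are spaced $O(\ell^{-1})$ apart, so a local/anticoncentration estimate for the convex-hull vertices of the random path---these being governed by the convex minorant and majorant of the associated random lattice walk---should force the probability of such a genuinely new vertex to $0$. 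For $v$, a single adjacent swap at a bulk position is an even more local perturbation and is handled by the same estimate; for $u$ one additionally uses that the $\sim_2$-class is an interval of the distributive lattice $W(\ell_a,\ell_b)$ (Corollary~\ref{lattice_class}), whose uniform element still concentrates on the same limit segment, so $\mathcal{E}(u)$ holds with high probability.

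\textbf{Main obstacle.} The crux is the last step: converting the heuristic of ``fine discretisation'' into a rigorous anticoncentration statement that the coordinate hyperplanes defining $C_{u'}$ almost never produce new extreme vertices. This demands quantitative control of the joint law of the vertices of $\conv(\gamma^{w,a})$ and $\conv(\gamma^{w,b})$, equivalently of the convex minorant/majorant of a random lattice path, together with a local limit theorem estimating how close these hyperplanes pass to actual product lattice points. The claimed monotonicity in $\ell$ is a separate difficulty: it does not obviously follow from the concentration estimates and would likely require an explicit coupling between the ensembles of length $\ell$ and of a larger admissible length, failing which one should retreat to the weaker but principal assertion that the probabilities tend to $1$.
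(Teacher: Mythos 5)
There is a genuine gap, and it is one your own write-up already concedes: the statement you are trying to prove is Conjecture~\ref{conj:fraction.ut3}, which the paper leaves \emph{open}. The paper offers no proof at all, only a heuristic (the discussion preceding the conjecture, about $\conv(\alpha^w\times\alpha^w\cap C_a)$ versus $\conv(\alpha^w\times\alpha^w)\cap C_a$ under fine discretisation of the rescaled path) plus simulation data (Figure~\ref{fig:longut3-fraction}). Your first two steps are sound and in fact a clean formalisation of that heuristic: the reduction via Theorem~\ref{thm:reduce_checks} to equality of degree-$2$ polytopes is correct, and the sufficient event $\mathcal{E}(w)\cap\mathcal{E}(u)$ does imply $w\sim_3 u$, since the affine map $\Pi$ of Proposition~\ref{prop:transformation} depends only on the (common) content of the words, so on that event all degree-$2$ polytopes are functions of the shared degree-$1$ polytopes. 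But the entire substance of the conjecture is then repackaged into the claim $\mathbb{P}(\mathcal{E}(w))\to 1$ (and its analogue for the size-biased, non-uniform law of $u$), and for that step you offer only the phrase ``a local/anticoncentration estimate \ldots should force the probability \ldots to $0$.'' No such estimate is stated, let alone proved; controlling the joint law of the vertices of the convex minorant/majorant of a random lattice path well enough to rule out new extreme vertices on the facets of $C_{u'}$ is precisely the open analytic problem, and your text acknowledges this under ``Main obstacle.''

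Two further points. First, the monotonicity in $\ell$ asserted in the conjecture is simply not addressed; you note correctly that concentration alone cannot give it and that a coupling would be needed, but ``retreat to the weaker assertion'' is not available when the statement to be proved includes monotonicity. Second, even the limiting claim for $u$ needs more care than your sketch suggests: $u$ is uniform on the $\sim_2$-class of $w$, so its unconditional law is biased toward large classes, and invoking Corollary~\ref{lattice_class} and ``the same limit segment'' does not by itself transfer an estimate proved for the uniform measure on $W(\ell r,\ell(1-r))$ to this conditional ensemble. In short: your proposal is a reasonable research plan that mirrors the paper's own heuristic, but it is not a proof, and the paper contains none to compare it against.
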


\begin{example}
Fix $r = 0.5$. For each $\ell = 200, 400, \dots, 1200$, we draw 50 random words from $W(\ell/2,\ell/2)$. For each word $w$, we compute the ratio of $\ut{3}$-class neighbours to $\ut{2}$-class neighbours, that is, 
$$\frac{|\{w': w' \leftrightarrow w, w' \sim_3 w\}|}{|\{w': w' \leftrightarrow w, w' \sim_2 w\}|}.$$
Figure \ref{fig:longut3-fraction} compares the distribution of these fractions for different $\ell$. In support of Conjecture \ref{conj:fraction.ut3}, this fraction increases as $\ell$ increases.
\begin{figure}[h!]
\centering
\includegraphics[width=\textwidth]{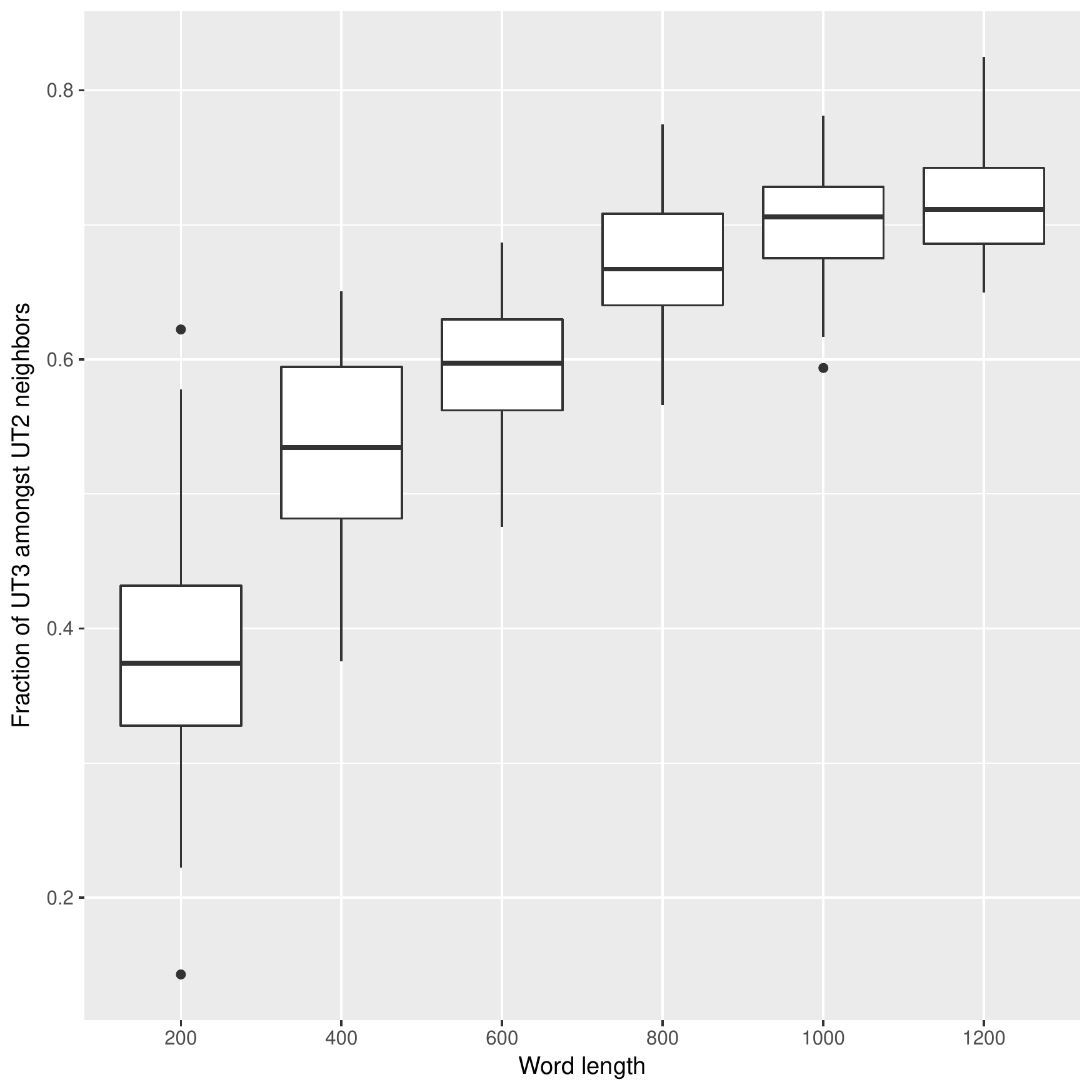}
\caption{Fraction of $\ut{3}$-class neighbours to $\ut{2}$-class neighbours of a randomly chosen word with equal number of $a$'s and $b$'s, for different word lengths.}
\label{fig:longut3-fraction}
\end{figure}
\end{example}

Say that an equivalence class is a twin if it has size 2. Figure \ref{fig:niso-22} below shows the distribution of isoterm, twin, and other classes amongst the $\sim_2$ classes of words of length $22$, as the content of the word varies. The distribution is symmetric as one can swap $a$'s and $b$'s. While the distribution looks approximately binomial, a quick analysis reveals that it is much flatter: it has heavier tail, and the mode is not as high. Note that the distribution of number of words of length $\ell$ as a function of $\ell_a$ is indeed $Binomial(\ell,1/2)$. Thus, should the distribution in Figure \ref{fig:niso-22} be approximately binomial, one should find that the ratio of equivalence classes to total number of words be close to constant. We plot this ratio in Figure \ref{fig:fraction-22} (black line). One finds that this fraction drops sharply as $\ell_a$ approaches $n/2$, see Figure \ref{fig:fraction-22}. A similar phenomenon is observed for the ratio of isoterms relative to the number of equivalence classes (cf. Figure \ref{fig:fraction-22}, blue line). Based on this figure, we conjecture that the fraction of isoterm classes amongst equivalence classes is still asymptotically zero for a fixed letter ratio, as the word lengths tend to infinity. In other words, for long words, it is extremely difficult to construct isoterms through random sampling.

\begin{conjecture}\label{conj:isoterm.fraction.2}
Fix $r \in (0,1)$. Let $e(m,r)$ be the ratio of number of $\ut{2}$ isoterms to the number of $\ut{2}$ equivalence classes in $W(\ell r,\ell(1-r))$. Then $e(m,r)$ is a monotone non-increasing function in $\ell$. Furthermore, $e(m,r) \to 0$ as $\ell \to \infty$.
\end{conjecture}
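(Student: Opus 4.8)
The plan is to recast the statement as a comparison between two counting functions and then analyse their exponential growth rates. For fixed $\ell$, write $\ell_a=\ell r$, $\ell_b=\ell(1-r)$, let $K=K(\ell_a,\ell_b)$ denote the number of $\sim_2$-classes in $W(\ell_a,\ell_b)$ and $I=I(\ell_a,\ell_b)$ the number of isoterms, so that $e(\ell,r)=I/K$. By Corollary~\ref{lattice_class} every class is the interval $[\underline w,\overline w]$ of the lattice $W(\ell_a,\ell_b)$, so $K$ equals the number of distinct minimal words and $I$ the number of words with $\underline w=\overline w$. By Corollary~\ref{cor:chain}, $\underline w=\overline w$ precisely when no single adjacent swap preserves the $\sim_2$-class, that is, when the path carries no \emph{free box}: a unit box $[p,p+(1,1)]$ at which flipping the corner between its low and high positions changes neither $\conv(\alpha)$ nor $\conv(\beta)$. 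Thus both $K$ and $I$ are governed entirely by the pair of lattice polygons $(A,B)=(\conv(\alpha),\conv(\beta))$, and the first task is to translate ``has a free box'' into a geometric condition on $(A,B)$.

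First I would establish the local dictionary for free boxes. Flipping the corner in $[p,p+(1,1)]$ moves the $a$-height in column $x(p)$ between heights $y(p)$ and $y(p)+1$ and, simultaneously, the $b$-height in row $y(p)$ between columns $x(p)$ and $x(p)+1$; by the reasoning behind Lemma~\ref{lem:w.from.a} and Corollary~\ref{polytopes} the flip preserves the signature exactly when both displaced points remain non-vertices lying interior to the relevant boundary edges of $A$ and $B$. Consequently a class is nontrivial iff the region swept between $\underline\gamma$ and $\overline\gamma$ — cut out jointly by $A$ and $B$ as in Theorem~\ref{thm:min.max} — contains a unit box, and rigid (an isoterm) iff this free region degenerates to a single path. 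The model examples are consistent: the near-diagonal word $(ab)^{\ell/2}$ has $A,B$ degenerate to segments and is rigid, whereas the fat trapezoidal signatures of Example~\ref{ex:catalan} produce exponentially large classes.

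The heart of the limit $e(\ell,r)\to0$ is then an enumeration at the $\ell^{2/3}$ scale. On the one hand $K=\exp(\Theta(\ell^{2/3}))$: the upper bound holds because a signature is a pair of convex lattice polygons in the box $[0,\ell_a]\times[0,\ell_b]$ and the number of convex lattice polygons in an $n\times n$ box is $\exp(\Theta(n^{2/3}))$ by the B\'ar\'any--Vershik theory, while the lower bound follows by exhibiting $\exp(\Omega(\ell^{2/3}))$ distinct $A$-polygons realised by words. On the other hand I would show that a rigid signature forces the free region, and hence the joint profile of $(A,B)$, to be everywhere thin, so that rigid signatures fall in the large-deviation regime of the B\'ar\'any limit-shape theorem and number only $\exp((\kappa'+o(1))\ell^{2/3})$ with $\kappa'$ strictly smaller than the growth constant $\kappa$ of $K$. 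Dividing then gives $e(\ell,r)=\exp(-(\kappa-\kappa'+o(1))\ell^{2/3})\to0$.

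The main obstacle is twofold. First, one must turn ``rigid $\Rightarrow$ thin free region'' into a quantitative statement strong enough to beat the trivial bound: a thin $A$ paired with an arbitrary $B$ already accounts for too many signatures, so the suppression must be extracted from the \emph{joint} constraint that $A$ and $B$ admit no common tall-and-wide location, which is precisely where the interplay of Proposition~\ref{prop:transformation} and the limit-shape concentration must be combined carefully. Second, and harder, is exact monotonicity of $e(\ell,r)$ in $\ell$: asymptotic growth rates cannot certify monotonicity for every admissible $\ell$, so this would require a structural size-respecting injection from the classes at one length to those at the next admissible length that carries isoterms to isoterms — constructing such an injection, presumably from the lattice and $1$-hereditary structure, is the most delicate part of the argument and is where I expect the conjecture to be genuinely hard.
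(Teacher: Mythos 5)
This statement is Conjecture~\ref{conj:isoterm.fraction.2}, one of the six open conjectures of the paper: the authors offer \emph{no proof}, only numerical evidence from exhaustive enumeration of short words (Figures~\ref{fig:niso-22} and~\ref{fig:fraction-22}), so there is no proof of theirs to compare against, and your proposal must stand on its own. It does not: it is a research programme, not a proof, and its two decisive steps are missing by your own admission. First, the entire limit statement $e\to 0$ is made to rest on a strict inequality $\kappa'<\kappa$ between the growth constants (at the $\exp(\Theta(\ell^{2/3}))$ scale) of isoterm signatures and of all signatures, and no argument for strictness is given; since both counts plausibly live at the same scale, the conjecture is \emph{precisely} a statement about these constants (or about sub-leading corrections), so assuming the gap is assuming the conclusion. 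Second, exact monotonicity in $\ell$ is conceded to be out of reach of any asymptotic argument, and the isoterm-preserving, ratio-controlling injection you invoke is never constructed; note also that a mere class-injection mapping isoterms to isoterms would bound $I$ and $K$ from below separately but would not bound the ratio $I/K$. The sound ingredients — classes biject with realizable signature pairs $(A,B)$ by Corollary~\ref{polytopes}, isoterm is equivalent to locally isolated by Corollary~\ref{cor:chain}, and the upper bound $K\le\exp(O(\ell^{2/3}))$ from counting convex lattice polygons in a box — are only the setup.

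Moreover, the one substantive mechanism you do propose is false. You claim that rigidity forces the ``joint profile of $(A,B)$'' to be everywhere thin, so that isoterm signatures fall into a large-deviation regime of the limit-shape theory. Rigidity means the region between $\underline{\gamma}$ and $\overline{\gamma}$ (in the sense of Theorem~\ref{thm:min.max}) is a single path; it does \emph{not} make $A$ or $B$ thin. A concrete counterexample: $w=a^k b^k a^k b^k$ is an isoterm for $\ut{2}$ by the corollary following Lemma~\ref{props} (each letter occurs in exactly two blocks), yet $A(w)$ and $B(w)$ are parallelograms of area $\Theta(\ell^2)$ — maximally fat, nowhere near any thin or atypical regime. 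So isoterms cannot be confined to a large-deviation event for the polygons themselves, and the B\'ar\'any--Vershik machinery cannot be applied in the way you describe; what would be needed is an entropy bound on \emph{pairs} $(A,B)$ whose induced free region is degenerate, a genuinely different counting problem that your sketch does not touch. Both halves of the conjecture therefore remain open after your argument.
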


\begin{figure}[h]
\includegraphics[width=\textwidth]{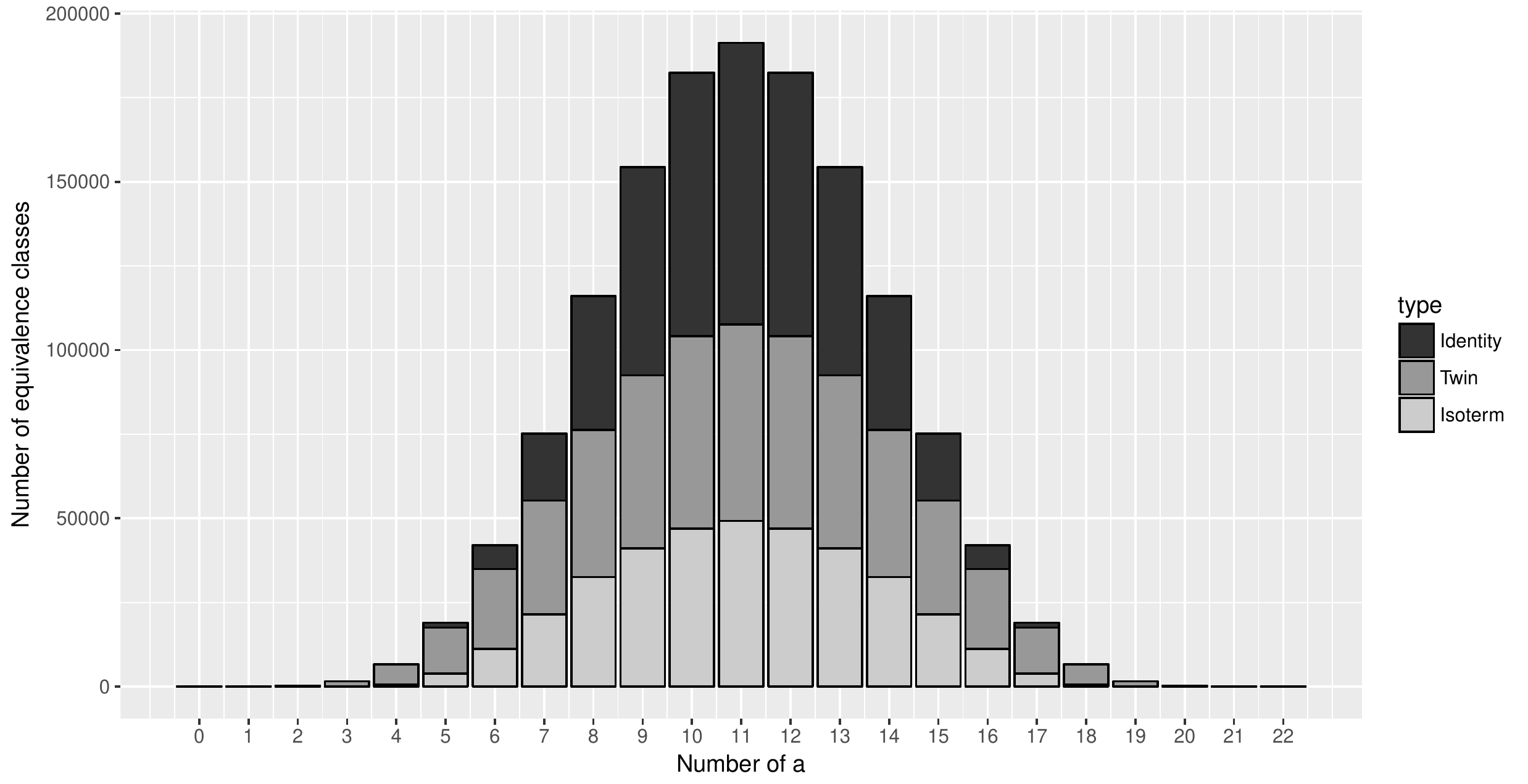}
\caption{Number of isoterms, twins and non-twin classes of identities in $\ut{2}$ for words with prescribed number of $a$'s and of length $22$.}\label{fig:niso-22}
\end{figure}

\begin{figure}[h]
\includegraphics[width=0.75\textwidth]{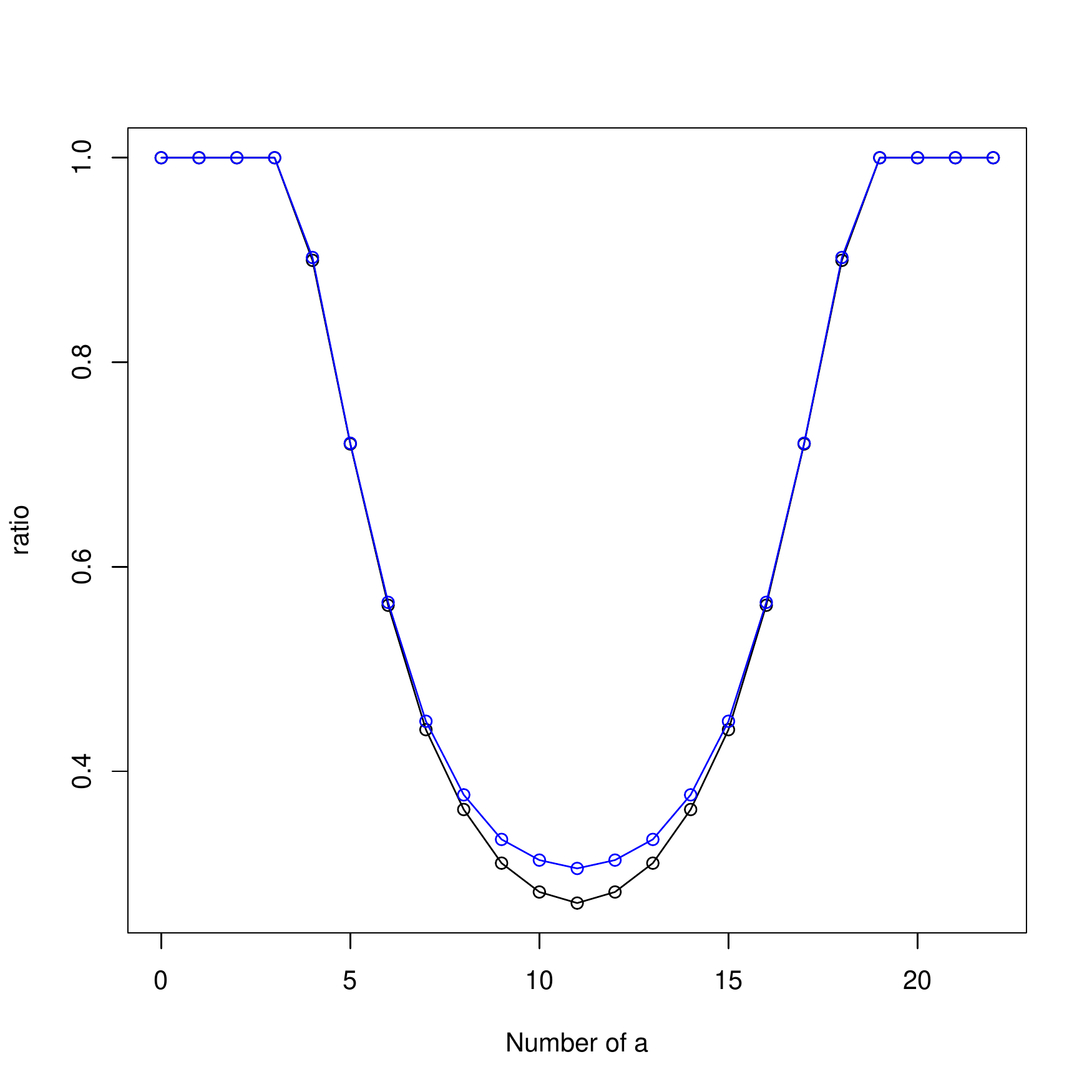}
\caption{The ratio of $\ut{2}$ classes to words  in black, and $\ut{2}$ isoterms to classes in blue, as a function of $\ell_a$, for binary words of length $22$.}\label{fig:fraction-22}
\end{figure}

We conclude with a conjecture of a combinatorial nature, concerning the size of the largest equivalence class amongst words with fixed number of letters. This conjecture has been verified numerically through exhaustive enumeration up to words of length~24.

\begin{conjecture}
\label{conj:large}
For each fixed $\ell_a \geq 5$, the largest $\ut{2}$ equivalence class in $W(\ell_a,\ell_a)$ belongs to the generalised Catalan family defined in Example~\ref{ex:catalan}.
\end{conjecture}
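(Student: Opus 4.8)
The plan is to exploit the description of each $\sim_2$-class as an interval $[\underline{w},\overline{w}]$ in the distributive lattice $W(\ell_a,\ell_a)$ (Corollary \ref{lattice_class}), so that the class size equals the number of staircase paths $\gamma$ with $\underline{\gamma}\preceq\gamma\preceq\overline{\gamma}$, or equivalently the number of paths realizing a fixed signature $(A,B)$. The conjecture then becomes an extremal problem: over all admissible pairs $(A,B)$ of convex lattice polygons compatible with some word in $W(\ell_a,\ell_a)$, maximize the number of realizing paths. First I would recast this count using the segment decomposition of Theorem \ref{thm:min.max}: the merged vertex set $\mathcal{V}=\mathcal{V}(A)\cup\mathcal{V}(B)\cup\{(\ell_a,\ell_a)\}$ forces any realizing path through a chain $p_1\triangleleft\cdots\triangleleft p_{M+1}$, and between consecutive forced points the admissible sub-paths may be chosen independently. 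Hence the class size factorizes as a product $\prod_i n_i$, where $n_i$ counts the staircase sub-paths from $p_i$ to $p_{i+1}$ whose $a$- and $b$-heights remain on $A$ and $B$ respectively.

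The second step is to argue that the maximum is attained when essentially all of the freedom is concentrated in a single \emph{free region}. Most factors $n_i$ equal $1$ (forced segments running along the boundary of one of the polygons), and a genuinely free factor arises only where $A$ and $B$ together leave the path room to wiggle. Since bounded lattice-path counts are strongly supermultiplicative --- splitting a region of total width $r$ into widths $r_1+r_2=r$ strictly decreases the count, by an inclusion argument analogous to the Catalan convolution $\sum_i C_i C_{r-i}=C_{r+1}$, which forces $C_{r_1}C_{r_2}\ll C_{r_1+r_2}$ --- any configuration with two or more separated free regions is dominated by one in which these regions are amalgamated. This reduces the problem to signatures possessing a single free region.

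Within a single free region, the convexity of $A$ and $B$ forces the two bounding lattice chains to be parallel straight segments (an upper chain coming from $A$, a lower chain from $B$); after a unimodular change of coordinates the realizing paths become exactly the monotone lattice paths confined to a strip of width $k$ and length $r$, that is, the Dyck paths of length $2r$ of height at most $k$. Such a class is precisely a member of the generalized Catalan family $C(r,k)$ of Example \ref{ex:catalan}, whose size is given by the formula of \cite[Theorem 10.3.4]{Krattenthaler}. The boundary decorations $ab^k$ and $a^kb$ consume exactly the letters needed to equalize the content, imposing the constraint $r+k=\ell_a-1$; one then maximizes $|C(r,k)|\sim(2\cos(\pi/(k+2)))^{2(\ell_a-1-k)}$ over $0\le k\le\ell_a-1$, a one-variable real optimization selecting the optimal height bound for each $\ell_a$.

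The principal obstacle lies in the reductions of the second and third steps: rigorously excluding every non-Catalan signature. Supermultiplicativity of bounded-height path counts must be established \emph{uniformly in the height bound}, since after amalgamation the two sub-regions may carry different bounds $k$, and the naive Catalan convolution no longer applies verbatim. One must also rule out single free regions whose bounding convex chains are not straight, where the count could a priori exceed the parallelogram count. Controlling these cases appears to demand sharp comparison inequalities for the number of lattice paths confined between pairs of convex chains of prescribed length, for which no clean tool is presently available; this is precisely why the statement remains conjectural, supported so far only by exhaustive verification up to length $24$.
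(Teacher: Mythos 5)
The statement you were asked to prove is not proved in the paper at all: it is Conjecture~\ref{conj:large}, and the paper's only evidence for it is exhaustive enumeration of $\sim_2$-classes up to word length $24$, as stated immediately before the conjecture. So there is no paper proof to compare against; the relevant question is whether your attempt closes the gap. It does not, and you concede as much in your final paragraph. Your first step is sound and consistent with the paper's machinery: by the Structural Theorem and Corollary~\ref{lattice_class} the class of $w$ is the interval $[\underline{w},\overline{w}]$, its size is the number of staircase paths between $\underline{\gamma}$ and $\overline{\gamma}$ (the corollary following Theorem~\ref{thm:min.max}), and this count factorizes over the segments between consecutive points of $\mathcal{V}$, since every path in the class passes through all of $\mathcal{V}$.

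The two reductions that follow, however, are restatements of the difficulty rather than arguments. First, the amalgamation step requires that, for a fixed letter budget $(\ell_a,\ell_a)$, any admissible signature with two separated free regions is dominated by an \emph{admissible} signature of the same content with one merged region. Supermultiplicativity of Catalan numbers ($C_{r_1}C_{r_2}\le C_{r_1+r_2}$) does not give this: the regions here are height-bounded with possibly different bounds $k_1 \neq k_2$, the merged configuration must still be realizable by a word in $W(\ell_a,\ell_a)$ (the boundary decorations $ab^k$ and $a^kb$ consume letters, and these costs change under merging), and no comparison inequality uniform in the height bounds is established or cited. Second, your claim that convexity of $A$ and $B$ \emph{forces} a single free region to be a straight parallel strip is asserted and then retracted two sentences later, where you admit one must rule out non-straight convex bounding chains; as written the argument is internally inconsistent, and that exclusion is exactly the extremal problem the conjecture encodes. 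Until both reductions are supplied with actual inequalities --- for instance, a sharp upper bound on the number of lattice paths confined between two convex chains in terms of the strip count with the same endpoints --- the attempt establishes nothing beyond what the paper already knows, and the concluding one-variable optimization over $k$, while correct in form, is moot.
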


\section{Summary}
\label{conclusion}
This work gives new geometric methods and algorithms to verify, construct and enumerate identities which hold in the semigroup of upper-triangular $n \times n$ tropical matrices $\ut{n}$.  Each word over a fixed alphabet $\Sigma$ has an associated signature, which is a sequence of polytopes, and two words over the same alphabet form an identity in $\ut{n}$ if and only if the first $\sum_{d=1}^{n-1} |\Sigma|^d$ terms of their signature agree. This translates problems concerning semigroup identities in $\ut{n}$ to questions about polytopes. With this insight, we obtained the Structural Theorem for $\ut{2}$ identities, a short proof of Adjan's Theorem for the bicyclic monoid and explicit construction of the shortest $\ut{3}$ identities. Furthermore, we show that the signature of a word can be efficiently computed. This yields algorithms to verify and enumerate $\ut{n}$ identities. We implement these algorithms, and produce six interesting conjectures and a wealth of data that support them. These conjectures are at the intersection of combinatorics, semigroup theory and probability. Our results and conjectures call for new research direction at this interface.

\bibliography{refs}

\begin{thebibliography}{10}

\bibitem{A66}
S.~I. Adjan.
\newblock {\em Defining relations and algorithmic problems for groups and
  semigroups}.
\newblock Proceedings of the Steklov Institute of Mathematics, No. 85 (1966).
  Translated from the Russian by M. Greendlinger. American Mathematical
  Society, Providence, R.I., 1966.

\bibitem{andrew1979efficient}
A.~M. Andrew.
\newblock Another efficient algorithm for convex hulls in two dimensions.
\newblock {\em Information processing letters}, 9(5):216--219, 1979.

\bibitem{butkovivc2010max}
P.~Butkovi{\v c}.
\newblock {\em Max-linear systems: theory and algorithms}.
\newblock Springer Monographs in Mathematics. Springer-Verlag London, Ltd.,
  London, 2010.

\bibitem{Cain17}
A.~J. {Cain}, G.~{Klein}, {L}. {Kubat}, A.~{Malheiro}, and J.~{Okni{\'n}ski}.
\newblock A note on identities in plactic monoids and monoids of
  upper-triangular tropical matrices.
\newblock {\em \texttt{arXiv:1705.04596}}.

\bibitem{CHLS16}
Y.~Chen, X.~Hu, Y.~Luo, and O.~Sapir.
\newblock The finite basis problem for the monoid of two-by-two upper
  triangular tropical matrices.
\newblock {\em Bull. Aust. Math. Soc.}, 94(1):54--64, 2016.

\bibitem{DJK17}
L.~Daviaud, M.~Johnson, and M.~Kambites.
\newblock Identities in upper triangular tropical matrix semigroups and the
  bicyclic monoid.
\newblock {\em J. Algebra}, 501:503--525, 2018.

\bibitem{I17}
Z.~{Izhakian}.
\newblock {Tropical plactic algebra, the cloaktic monoid, and semigroup
  representations}.
\newblock {\em \texttt{arXiv:1701.05156}}.

\bibitem{I14}
Z.~Izhakian.
\newblock Semigroup identities in the monoid of triangular tropical matrices.
\newblock {\em Semigroup Forum}, 88(1):145--161, 2014.

\bibitem{I14Erratum}
Z.~Izhakian.
\newblock Erratum to: {S}emigroup identities in the monoid of triangular
  tropical matrices [ {MR}3164156].
\newblock {\em Semigroup Forum}, 92(3):733, 2016.

\bibitem{IM10}
Z.~Izhakian and S.~W. Margolis.
\newblock Semigroup identities in the monoid of two-by-two tropical matrices.
\newblock {\em Semigroup Forum}, 80(2):191--218, 2010.

\bibitem{Krattenthaler}
C.~Krattenthaler.
\newblock Lattice path enumeration.
\newblock In {\em Handbook of enumerative combinatorics}, Discrete Math. Appl.
  (Boca Raton), pages 589--678. CRC Press, Boca Raton, FL, 2015.

\bibitem{lin2017linear}
B.~Lin and N.~M. Tran.
\newblock Linear and rational factorization of tropical polynomials.
\newblock {\em arXiv preprint arXiv:1707.03332}, 2017.

\bibitem{maclagan2015introduction}
D.~Maclagan and B.~Sturmfels.
\newblock {\em Introduction to tropical geometry}, volume 161.
\newblock American Mathematical Soc., 2015.

\bibitem{O15}
J.~Okni\'nski.
\newblock Identities of the semigroup of upper triangular tropical matrices.
\newblock {\em Comm. Algebra}, 43(10):4422--4426, 2015.

\bibitem{P06}
F.~Pastijn.
\newblock Polyhedral convex cones and the equational theory of the bicyclic
  semigroup.
\newblock {\em J. Aust. Math. Soc.}, 81(1):63--96, 2006.

\bibitem{Shi14}
Y.~{Shitov}.
\newblock {A semigroup identity for tropical $3 \times 3$ matrices}.
\newblock {\em ArXiv e-prints}, June 2014.

\bibitem{Shl90}
F.~G. Shle{\u\i}fer.
\newblock Looking for identities on a bicyclic semigroup with computer
  assistance.
\newblock {\em Semigroup Forum}, 41(2):173--179, 1990.

\bibitem{Shn89}
L.~M. Shneerson.
\newblock On the axiomatic rank of varieties generated by a semigroup or monoid
  with one defining relation.
\newblock {\em Semigroup Forum}, 39(1):17--38, 1989.

\bibitem{TaylorThesis}
M.~Taylor.
\newblock {\em On upper triangular tropical matrix semigroups, tropical matrix
  identities and $\mathbb{T}$-modules}.
\newblock Ph. D. Thesis, University of Manchester, 2017.

\end{thebibliography}
\bibliographystyle{plain}
\end{document}